\documentclass[a4paper, 12pt]{amsart}
\usepackage[active]{srcltx}
\usepackage[all]{xy}
\usepackage{subfig}
\usepackage{hyperref}
\hypersetup{
colorlinks,
allcolors=blue
}
\usepackage{mathrsfs}

\usepackage{amsmath,amssymb,amsthm,a4wide}

\usepackage{enumerate}
\usepackage{color}

\theoremstyle{plain}
\newtheorem{lemma}{Lemma}[section]
\newtheorem{theorem}[lemma]{Theorem}

\newtheorem{conjecture}[lemma]{Conjecture}

\theoremstyle{definition}

\newtheorem{definition}[lemma]{Definition}
\newtheorem{remark}[lemma]{Remark}

\theoremstyle{remark}

\numberwithin{equation}{section}

\newcommand{\eps}{\varepsilon}

\newcommand{\be}{\begin{equation}}
\newcommand{\ee}{\end{equation}}

\numberwithin{equation}{section}

\usepackage{listings}
\usepackage{xcolor}

\begin{document}


\title[Closed  Minimal Hypersurfaces]{Closed Minimal Hypersurfaces in $\mathbb{S}^5(1)$ with Constant Scalar and Gauss-Kronecker Curvatures}


\author[Deng]{Qintao Deng$^*$}
\address[Deng]{School of Mathematics and Statistics \& Key Lab NAA--MOE, Central China Normal University, Wuhan 430079, China}
\email{qintaodeng@ccnu.edu.cn}
\thanks{$^{*}$ Supported in part by NSFC (No. 10901067, No. 12271195) and the Special Fund for Basic Scientific Research of Central Colleges (no. CCNU19QN081).}

\author[Kou]{Yunjia Kou}
\address[Kou]{Washington State University, Department of Mathematics and Statistics, Pullman, WA, 99164-3113, USA}
\email{yunjia.kou@wsu.edu}


\date{}


\subjclass[2020]{53C40, 53C42}
\keywords{Isoparametric minimal hypersurfaces, Constant scalar curvature, Constant Gauss-Kronecker curvature, Chern's conjecture}

\maketitle

\section*{Abstract}
In this paper, we prove that any closed minimal  hypersurface $M^4$ of $\mathbb{S}^5(1)$ with constant scalar curvature and constant Gauss-Kronecker curvature
must be isoparametric. Specifically, $M^4$ is either an equatorial 4-sphere,  a Clifford torus $\mathbb{S}^2\left(\frac{\sqrt{2}}{2}\right)\times \mathbb{S}^2\left(\frac{\sqrt{2}}{2}\right)$ or $\mathbb{S}^1\left(\frac{1}{2}\right)\times \mathbb{S}^3\left(\frac{\sqrt{3}}{2}\right)$, or a Cartan's minimal hypersurface. Consequently, the squared norm of the second fundamental form $S$ can only take the values 0, 4, 12. This result provides strong support for Chern's Conjecture.

\tableofcontents

\vspace{5mm}

\section{Introduction}

\subsection{Historical Background and Motivation}

The geometry of minimal hypersurfaces in spheres has been a central topic in global differential geometry for decades. A fundamental problem in this field concerns the rigidity and classification of such hypersurfaces under curvature constraints. In \cite{CdS}, S. S. Chern proposed a celebrated conjecture regarding the squared norm of the second fundamental form, denoted by $S$.

\begin{conjecture}[Chern's Conjecture \cite{CdS}]
Let $M^n$ be an $n$-dimensional closed minimal hypersurface in the unit sphere $\mathbb{S}^{n+1}(1)$ with constant scalar curvature. Then the set of possible values for $S$ is discrete.
\end{conjecture}
The following strong version of Chern's conjecture implies the original statement:

\vspace{1.5mm}

\noindent
\textbf{Strong Chern's Conjecture}
\textit{Let $M^n$ be an $n$-dimensional closed minimal hypersurface in the unit sphere $\mathbb{S}^{n+1}(1)$ with constant scalar curvature. Then $M^n$ is isoparametric.}

\vspace{1.5mm}

M\"unzner \cite{M} showed that the number of distinct principal curvatures $g$ of an isoparametric hypersurface in a unit sphere can only be $1, 2, 3, 4$, or $6$. Thus, for a fixed dimension $n$, this rigid structure forces $S$ to take values in a discrete set.

\vspace{1.5mm}

The study of this conjecture was significantly advanced by J. Simons \cite{S} in 1968. He established a fundamental integral inequality which implies the first gap theorem for $S$.


\begin{theorem}\emph{(Simons \cite{S})}
Let $M^n$ be a closed minimal hypersurface in $\mathbb{S}^{n+1}(1)$. Then
\begin{equation}
\int_{M^n}S(S-n) \geq 0,
\end{equation}
where $S$ denotes the squared norm of the second fundamental form.
\end{theorem}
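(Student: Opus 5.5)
The plan is to derive Simons' inequality from the Simons identity for the Laplacian of the second fundamental form, and then integrate over the closed manifold $M^n$.

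Let $h_{ij}$ be the components of the second fundamental form in a local orthonormal frame. Minimality gives $\sum_i h_{ii}=0$, and the Codazzi equation says $h_{ijk}$ is totally symmetric.

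\textbf{Step 1.} Compute $\Delta h_{ij}=\sum_k h_{ijkk}$. I would commute covariant derivatives with the Ricci identities, $h_{ijkl}-h_{ijlk}=\sum_m h_{mj}R_{mikl}+\sum_m h_{im}R_{mjkl}$, together with Codazzi. Then I substitute the Gauss equation $R_{ijkl}=(\delta_{ik}\delta_{jl}-\delta_{il}\delta_{jk})+h_{ik}h_{jl}-h_{il}h_{jk}$ and use $H=0$. This yields Simons' identity
\[
\Delta h_{ij}=n h_{ij}-S h_{ij}.
\]
Hence
\[
\tfrac12\Delta S=\sum_{i,j,k}h_{ijk}^2+\sum_{i,j}h_{ij}\Delta h_{ij}=|\nabla h|^2+S(n-S).
\]

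\textbf{Step 2.} Integrate over the closed manifold $M^n$. The divergence theorem gives $\int_{M^n}\Delta S=0$, so
\[
\int_{M^n}S(S-n)=\int_{M^n}|\nabla h|^2\ge 0.
\]
This is exactly the claimed inequality. In fact it is slightly stronger: equality holds if and only if $\nabla h\equiv 0$.

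\textbf{Main obstacle.} The only delicate point is the curvature bookkeeping in Step 1. After inserting the Gauss equation, the quadratic and cubic terms must reduce to $n h_{ij}-S h_{ij}$ with no leftover term involving $\operatorname{tr}(h^3)$.

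A priori, the commutation terms produce $H\,(h^2)_{ij}$ and $\operatorname{tr}(h^2)\,h_{ij}$ contributions. I would organize the computation by diagonalizing $h$ at a point, $h_{ij}=\lambda_i\delta_{ij}$. Then the curvature term becomes
\[
\sum_k(\lambda_i-\lambda_k)R_{ikik}=\sum_k(\lambda_i-\lambda_k)(1+\lambda_i\lambda_k)=n\lambda_i-\lambda_i\sum_k\lambda_k^2,
\]
where the second equality uses $\sum_k\lambda_k=0$. Multiplying by $\lambda_i$ and summing over $i$ gives $S(n-S)$, as required.
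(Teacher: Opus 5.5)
Your proposal is correct. Deriving $\Delta h_{ij}=(n-S)h_{ij}$ from the Ricci commutation formula, Codazzi, the Gauss equation and $H=0$, and then integrating $\frac12\Delta S=|\nabla h|^2+S(n-S)$ over the closed manifold, is the standard proof; the paper does not reprove the result (it is quoted from Simons), but \eqref{eq:b1} is exactly your pointwise identity specialized to constant $S$.
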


Theorem 1.2 implies that if $0\leq S\leq n$, then either $S\equiv0$ (totally geodesic) or $S\equiv n$. The latter case was completely classified by Chern, do Carmo, and Kobayashi \cite{CdS}, and independently by Lawson \cite{L}, who proved that the Clifford torus are the only closed minimal hypersurfaces in $\mathbb{S}^{n+1}(1)$ satisfying $S=n$.  

Motivated by E. Cartan's isoparametric examples \cite{C}, which suggest that $2n$ might be the next attainable value for $S$, the following \textit{second pinching problem} has attracted considerable attention:

\begin{conjecture}[Second Pinching Conjecture]
Let $M^n$ ($n>3$) be a closed minimal hypersurface in $\mathbb{S}^{n+1}(1)$ with constant scalar curvature. If $S>n$, then $S \geq 2n$.
\end{conjecture}

Significant efforts have been devoted to establishing a gap above $n$. Under the assumption that $S$ is constant, Peng and Terng \cite{PT1} made a pioneering breakthrough, proving that $S > n + \frac{1}{12n}$ whenever $S > n$. This lower bound was subsequently improved to $S > n + \frac{n}{3}$ (for $n>3$) by Cheng and Yang \cite{YC1, YC2, YC3}. To date, the strongest unconditional bound for constant $S$ is $S > n + \frac{3n}{7}$, established by Suh and Yang \cite{SY}.

By imposing additional geometric constraints, one can obtain sharper estimates closer to $2n$. For instance, Cheng and Yang \cite{YC3} proved the following:
\begin{theorem}\emph{(Cheng-Yang \cite{YC3})}\label{YC3}
Let $M^n$ ($n > 3$) be a closed minimal hypersurface in $\mathbb{S}^{n+1}(1)$ with constant scalar curvature. If the cubic form $f_3 = \sum_{i,j,k} h_{ij} h_{jk} h_{ki}\ (=\sum_{i} \lambda_i^3)$ is constant, and $S > n$, then $S > n + \frac{2}{3}n$. The same conclusion holds if $\sum_{i} \lambda_i^4$ is assumed to be constant.
\end{theorem}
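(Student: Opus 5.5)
We follow the Peng--Terng/Cheng--Yang strategy: combine Simons' identity for $\frac12\Delta S$ with the analogous identity for $\frac12\Delta |\nabla h|^2$, integrate over $M^n$, and use the extra hypothesis (constant $f_3$ or constant $f_4=\sum_i\lambda_i^4$) to bound the cubic terms that otherwise spoil the estimate. Throughout we work in a local orthonormal frame diagonalizing $h_{ij}=\lambda_i\delta_{ij}$, with $\sum_i\lambda_i=0$ and $S=\sum_i\lambda_i^2$ constant.

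\textbf{Step 1.} Since $S$ is constant, Simons' identity
\[
\tfrac12\Delta S=|\nabla h|^2+S(n-S)
\]
gives $|\nabla h|^2=S(S-n)$ pointwise. Assuming $S>n$, we have $|\nabla h|^2>0$ on a dense set.

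\textbf{Step 2.} Next we compute $\frac12\Delta|\nabla h|^2=\sum h_{ijkl}^2+\dots$ via the Ricci identities. Integrating, one gets the standard formula
\[
\int_M\sum h_{ijkl}^2=\int_M\Big[(S-2n-3)|\nabla h|^2+3\Big(2B-A\Big)\Big],
\]
where $A=\sum\lambda_i^2h_{ijk}^2$ and $B=\sum\lambda_i\lambda_jh_{ijk}^2$. In parallel, we obtain a lower bound for $\int\sum h_{ijkl}^2$ from the symmetric part of $h_{ijkl}$ combined with the commutation formula $h_{ijkl}-h_{ijlk}=\sum h_{mj}R_{mikl}+\dots$. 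This yields
\[
\sum h_{ijkl}^2\ge \tfrac34\sum(\lambda_i-\lambda_j)^2(1+\lambda_i\lambda_j)^2.
\]
Here one needs $\sum(\lambda_i-\lambda_j)^2(1+\lambda_i\lambda_j)^2=2\big[nS-S^2+nf_4-f_3^2\big]+2S^2$-type expressions written in terms of $f_3$ and $f_4$.

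\textbf{Step 3 (using the hypothesis).} If $f_3$ is constant, differentiate: $\sum_i\lambda_i^2h_{iik}=0$ for every $k$. Together with $\sum_i\lambda_ih_{iik}=0$ (from $S$ constant) and $\sum_ih_{iik}=0$ (minimality), this forces strong linear constraints on the $h_{iik}$. Differentiating once more gives an identity expressing $\sum\lambda_i^2h_{iikk}$ through $\sum\lambda_ih_{ijk}^2$. This allows us to control $2B-A$ in terms of $S|\nabla h|^2$ and to improve the lower bound on $\sum h_{ijkl}^2$, since certain components $h_{iikk}$ are determined. If instead $f_4$ is constant, the same procedure applies with $\sum_i\lambda_i^3h_{iik}=0$. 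Finally, $\int f_4$ and $\int f_3^2$ are linked through $\frac12\Delta f_3$ and $\frac13\Delta f_3$-type integral formulas, which relate $\int\sum\lambda_ih_{ijk}^2$ to $nf_3-Sf_3$. Under constancy, this gives $\int(n-S)f_3\cdot(\ldots)$ relations pinning $f_3^2$ against $f_4$.

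\textbf{Step 4.} Substituting these bounds into the integrated identity leaves an inequality of the form
\[
\int_M S(S-n)\big(c_1S-c_2n\big)\ge 0
\]
with constants derived from the algebraic estimates. Since $S(S-n)>0$, this forces $S>n+\frac23n$, with strict inequality coming from an equality analysis. The main obstacle is Step 3: obtaining a sharp enough pointwise estimate for $2B-A$ and for the quartic quantity $\sum(\lambda_i-\lambda_j)^2(1+\lambda_i\lambda_j)^2$ under the three linear constraints on $(h_{iik})_i$. I would first try a Lagrange-multiplier/Cauchy--Schwarz estimate $|2B-A|\le \frac{\sqrt{2}}{?}\sqrt S\,|\nabla h|^2\cdot(\ldots)$ of Okumura type, refined by splitting the indices $(i,j,k)$ into the cases all equal, two equal, and all distinct, exploiting the vanishing moment constraints in the two-equal case.
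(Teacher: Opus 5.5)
The paper contains no proof of this statement. It is quoted from Cheng--Yang \cite{YC3} and used as a black box in Lemmas \ref{lemma4.3:K>0} and \ref{3:Kfeifu}, for $n=4$ in the $f_4$ form, since constant $\mathcal{K}$ is equivalent to constant $f_4$. So your outline has to stand on its own. Its general framework (Simons' identity, the Peng--Terng formula for $\frac12\Delta|\nabla h|^2$, the Ricci-identity lower bound) is the standard one for results of this type, but it does not yet constitute a proof: \textbf{the step that produces the threshold $\frac53 n$ is missing.}
\begin{enumerate}
\item In Step 3 you say the first-order constraints ($\sum_i h_{iik}=\sum_i\lambda_ih_{iik}=0$ together with $\sum_i\lambda_i^2h_{iik}=0$, resp.\ $\sum_i\lambda_i^3h_{iik}=0$) and their derivatives control $A-2B$ and improve the lower bound for $\sum h_{ijkl}^2$. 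Neither inequality is ever stated.
\item The second-order identity $\sum_i\lambda_i^2h_{iikk}=-2\sum_{i,j}\lambda_ih_{ijk}^2$ is written down and then never used.
\item The formula $\frac13\Delta f_3=(n-S)f_3+2\sum\lambda_ih_{ijk}^2$ only gives $\int\sum\lambda_ih_{ijk}^2=\frac12(S-n)\int f_3$. It does not by itself relate $f_3^2$ to $f_4$.
\item Step 4 posits $\int S(S-n)(c_1S-c_2n)\ge 0$ with unspecified $c_1,c_2$. Nothing shows $c_2/c_1=\frac53$ or where strictness comes from. Your closing remark, an Okumura-type bound with a placeholder constant, confirms the decisive estimate is not in hand. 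As written, the same outline would yield any threshold.
\end{enumerate}

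The identities you do state also need correcting. This matters because the argument balances the two sides of the integrated Peng--Terng formula, and signs decide which quantity needs an upper bound. For constant $S$ the correct identity is
\[
\int_M\sum_{i,j,k,l}h_{ijkl}^2=\int_M\Big[(S-2n-3)|\nabla h|^2+3(A-2B)\Big],
\]
with $A-2B$ rather than $2B-A$. You can test it on Cartan's hypersurface in $\mathbb{S}^4(1)$: take $n=3$, $S=6$, with only $h_{123}$ and its permutations nonzero and $h_{123}^2=3$. There $A=36$ and $B=-18$, and a direct computation gives $\sum h_{ijkl}^2=162=-54+216$. Your sign would make the right-hand side $-270$.

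Similarly,
\[
\sum_{i,j}(\lambda_i-\lambda_j)^2(1+\lambda_i\lambda_j)^2=2\big(nS-2S^2+Sf_4-f_3^2\big),
\]
so the coefficient of $f_4$ is $S$, not $n$. Your bound $\sum h_{ijkl}^2\ge\frac34\sum_{i,j}(\lambda_i-\lambda_j)^2(1+\lambda_i\lambda_j)^2$ itself is fine.

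After these corrections, you still need a sharp upper estimate of $\int 3(A-2B)$ and a sharp lower estimate of $\int\sum h_{ijkl}^2$ that genuinely use constancy of $f_3$ or $f_4$. Some ingredients are available. In the $f_4$ case, \eqref{eq:b3} gives $2A+B=(S-n)f_4$ pointwise. Symmetrizing over $i,j,k$ gives the exact identity
\[
3(A-2B)=4(2A+B)-\tfrac53\sum h_{ijk}^2(\lambda_i+\lambda_j+\lambda_k)^2 .
\]
But turning such ingredients into an inequality that closes exactly at $S=\frac53n$ is the real content of the theorem, and it is absent from your proposal.
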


Parallel to the study of constant $S$, the generalized pinching problem where $S$ is not assumed constant a priori, has also been extensively investigated. Peng and Terng \cite{PT3} initiated this direction:
\begin{theorem}\emph{(Peng-Terng \cite{PT3})}
Let $M^n$ be a closed minimal hypersurface in $\mathbb{S}^{n+1}(1)$. For $n \leq 5$, there exists a constant $\delta(n)>0$ such that if $n \leq S(x) < n+\delta(n)$ everywhere on $M$, then $S(x) \equiv n$, and $M$ is a Clifford torus.
\end{theorem}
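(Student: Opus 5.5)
The plan follows the Peng--Terng strategy: a Simons-type identity for $S$, a Bochner formula for $|\nabla h|^2$, and an algebraic inequality on the cubic form near $S=n$ that forces $S\equiv n$. We work in a local orthonormal frame diagonalizing $h$ with principal curvatures $\lambda_i$. The first step is Simons' identity
\[
\tfrac12\Delta S=|\nabla h|^2+S(n-S),
\]
so that $\int_M |\nabla h|^2=\int_M S(S-n)$. If $n\le S<n+\delta$, then $\int_M|\nabla h|^2\le \delta\int_M S$, which is small. The second step computes $\tfrac12\Delta|\nabla h|^2$ using the commutation formulas for $h_{ijkl}$ and $h_{ijklm}$. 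Integrating yields the second-order identity
\[
\int_M |\nabla^2 h|^2=\int_M\Big[(S-2n-3)|\nabla h|^2+3(2B-A)\Big]+\text{(terms involving } \nabla S\text{)},
\]
where
\[
A=\sum h_{ijk}^2\lambda_i^2,\qquad B=\sum h_{ijk}^2\lambda_i\lambda_j .
\]

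The key step is a pointwise algebraic estimate that bounds $3(2B-A)$ and the $\nabla S$ terms by $C(n)\,S\,|\nabla h|^2$. Minimality gives $\sum_i h_{iik}=0$, and the symmetry of $h_{ijk}$ is used throughout. The aim is an integral inequality of the form
\[
\int_M |\nabla^2 h|^2\le \big(C_1(n)(S-n)+\text{small}\big)\int_M|\nabla h|^2 .
\]
This has to be combined with a lower bound for $\int|\nabla^2h|^2$ in terms of $\int|\nabla h|^2$. That lower bound comes from differentiating Simons' identity, i.e.\ from $\tfrac12|\nabla S|^2\le 2S|\nabla h|^2$ together with
\[
\int_M |\nabla S|^2=-\int_M S\,\Delta S .
\]
It gives an inequality like
\[
\int_M(S-n)|\nabla h|^2\gtrsim \int_M \big(|\nabla h|^2\big)^2/S .
\]
Near $S=n$, these two bounds contradict each other unless $\nabla h\equiv0$. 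Then $S\equiv n$, and the Chern--do Carmo--Kobayashi/Lawson classification identifies $M$ as a Clifford torus.

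I expect the main obstacle to be the sharp algebraic estimate of $2B-A$ and of the cross terms involving $\nabla S$ when $S$ is not assumed constant. The dimension restriction $n\le5$ should enter exactly here, since the constants only close for small $n$. My first attempt would be to split $h_{ijk}$ into the components with all indices distinct and those with repeated indices, and to apply Lagrange multipliers to the quadratic form in the $\lambda_i$ subject to $\sum\lambda_i=0$ and $\sum\lambda_i^2=S\approx n$. This would give the optimal $C(n)$ for each $n\le 5$ separately and produce an explicit $\delta(n)$.
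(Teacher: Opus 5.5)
The paper does not prove this statement. It is only quoted from Peng--Terng as background, so your sketch has to stand on its own, and as written it does not close.

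\textbf{Problems with the upper bound.} First, a sign error. The correct Bochner formula is $\tfrac12\Delta|\nabla h|^2=|\nabla^2h|^2+(2n+3-S)|\nabla h|^2-\tfrac32|\nabla S|^2+3(2B-A)$, which integrates to
\[
\int_M|\nabla^2h|^2=\int_M\Big[(S-2n-3)|\nabla h|^2+3(A-2B)+\tfrac32|\nabla S|^2\Big].
\]
So it is $A-2B$ that needs an upper bound, not $2B-A$.

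Second, the $\nabla S$ term cannot be absorbed pointwise. Even the sharp Kato bound $|\nabla S|^2\le\frac{4n}{n+2}S|\nabla h|^2$ leaves $\frac{6n}{n+2}S|\nabla h|^2$, which exceeds $(n+3)|\nabla h|^2$ when $S\approx n$. The term is small only after integration:
\[
\int|\nabla S|^2=2\int S(S-n)^2-2\int(S-n)|\nabla h|^2\le2\delta\int|\nabla h|^2 .
\]

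Third, and decisively, no pointwise algebraic estimate gives your bound $\int|\nabla^2h|^2\le(C_1(n)(S-n)+\text{small})\int|\nabla h|^2$.
\begin{itemize}
\item For $n\ge3$, take the Clifford configuration $\lambda=(\sqrt{n-1},-1/\sqrt{n-1},\dots,-1/\sqrt{n-1})$ with $h_{122}=-h_{133}=1$ (plus symmetric entries), which satisfies $\sum_ih_{iik}=0$. Then $3(A-2B)=(n+3)|\nabla h|^2$ exactly. So the integrand $(S-2n-3)|\nabla h|^2+3(A-2B)$ can be no better than $(S-n)|\nabla h|^2\ge0$ there, which has no useful sign.
\item Away from Clifford configurations the integrand is a positive multiple of $|\nabla h|^2$. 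For $n=4$, take $\lambda=(-\sqrt2,\sqrt2,0,0)$, so $S=n$, with $h_{112}=1$ and $h_{233}=h_{244}=-\tfrac12$. This is admissible and gives $3(A-2B)=\tfrac{22}{3}|\nabla h|^2>7|\nabla h|^2$.
\end{itemize}
So after the algebraic step the right-hand side is nonnegative. Nothing forces $\nabla h\equiv0$ without a genuine lower bound for $\int|\nabla^2h|^2$.

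\textbf{The missing lower bound.} That lower bound is exactly what your sketch lacks. The inequality you offer, $\int(S-n)|\nabla h|^2\gtrsim\int|\nabla h|^4/S$, contains no second derivatives, so it cannot be played against an upper bound for $\int|\nabla^2h|^2$. It also does not follow from the two facts you cite: combining $|\nabla S|^2\le4S|\nabla h|^2$ with $\int|\nabla S|^2=-\int S\Delta S$ only gives $\int S^2(S-n)\le3\int S|\nabla h|^2$. Multiplying Simons' identity by $S-n$ gives $\int(S-n)|\nabla h|^2=\int S(S-n)^2-\tfrac12\int|\nabla S|^2$, which is an upper bound, not a lower one. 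Hence ``these two bounds contradict each other'' has no content.

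What is needed is a lower bound on $|\nabla^2h|^2$ that sees the principal curvatures. The natural source is the Ricci identity $h_{iijj}-h_{jjii}=(\lambda_i-\lambda_j)(1+\lambda_i\lambda_j)$, which gives
\[
|\nabla^2h|^2\ge\tfrac32\sum_{i<j}(\lambda_i-\lambda_j)^2(1+\lambda_i\lambda_j)^2 .
\]
This is large precisely where $\lambda$ is far from a Clifford configuration, which is where $A-2B$ is bad. Turning it into a quantitative estimate that beats the borderline $(n+3)$-terms near Clifford configurations is the real content of the theorem, and your proposal does not address it.

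Your last step ($\nabla h\equiv0\Rightarrow S\equiv n\Rightarrow$ Clifford torus by Chern--do Carmo--Kobayashi/Lawson) is fine.
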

The dimensional restriction was gradually relaxed by Wei and Xu \cite{WX} ($n \leq 7$) and Zhang \cite{Zh} ($n \leq 8$). Eventually, Ding and Xin \cite{DX} completely removed the dimensional restriction, proving the theorem for all $n$ with a gap $\delta(n) \ge \frac{n}{23}$. This bound was further refined to $\frac{n}{18}$ by Xu-Xu \cite{XX} and Lei-Xu-Xu \cite{LXX}.

\vspace{3mm}

For the specific case of $n=3$, the conjecture has been fully resolved. A cornerstone in this area is the celebrated rigidity result by de Almeida and Brito \cite{AB}, who proved that any closed minimally immersed hypersurface $M^3$ with non-negative constant scalar curvature $R_M$ in $\mathbb{S}^4(1)$ is necessarily isoparametric.
By settling the complementary case of the Peng-Terng theorem \cite{PT1}, Chang \cite{Ch} provided a complete classification:
\begin{theorem}\emph{(Chang \cite{Ch})}
A closed minimal hypersurface with constant scalar curvature in $\mathbb{S}^4(1)$ is isoparametric. Specifically, it is either an equatorial 3-sphere ($S=0$), a Clifford torus ($S=3$), or a Cartan minimal hypersurface ($S=6$).
\end{theorem}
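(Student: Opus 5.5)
Since $M^3$ is minimal in $\mathbb{S}^4(1)$, the Gauss equation gives $R_M = n(n-1) - S = 6 - S$. Constant scalar curvature is therefore the same as constant $S$. The plan is to split according to the sign of $R_M$, i.e.\ according to whether $S \le 6$ or $S > 6$.

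\textbf{The case $S\le 6$.} Here $R_M \ge 0$ is constant, so the theorem of de Almeida and Brito \cite{AB} applies directly and $M^3$ is isoparametric. By M\"unzner \cite{M}, together with the classification of isoparametric hypersurfaces with $g\le 3$, $M^3$ is one of three hypersurfaces:
\begin{itemize}
\item[(i)] an equatorial sphere, with $S=0$;
\item[(ii)] a Clifford torus, with $S=3$;
\item[(iii)] Cartan's minimal hypersurface, with $S=6$.
\end{itemize}
Simons' inequality (Theorem 1.2), with the Chern--do Carmo--Kobayashi/Lawson rigidity, independently confirms that no value in $(0,3)$ occurs.

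\textbf{The case $S>6$.} The real content is to show this case cannot occur. I would work with the principal curvatures $\lambda_1,\lambda_2,\lambda_3$, which satisfy $\sum\lambda_i=0$ and $\sum\lambda_i^2=S$. The only remaining symmetric invariant is $f_3=\sum\lambda_i^3 = 3\lambda_1\lambda_2\lambda_3$, which obeys $f_3^2\le S^3/6$, with equality exactly where two principal curvatures coincide.

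Since $S$ is constant, Simons' identity $\tfrac12\Delta S = |\nabla h|^2 + S(3-S)$ gives $|\nabla h|^2 \equiv S(S-3)$. I would then compute the next two Laplacians:
\begin{itemize}
\item $\Delta f_3$, expressed through the $h_{ijk}$ and $S$;
\item $\tfrac12\Delta|\nabla h|^2 = 0$, which yields an identity of the form $|\nabla^2 h|^2 = (\text{polynomial in } S,\ f_3,\ h_{ijk}\text{ terms})$.
\end{itemize}

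On the open dense set where the $\lambda_i$ are distinct, I would use an adapted frame. In dimension $3$ the tensor $h_{ijk}$ is totally symmetric and traceless in each pair. It is therefore governed by very few components: $h_{123}$ and the $h_{iij}$ subject to $\sum_i h_{iij}=0$. This lets one express $|\nabla h|^2$, $\sum \lambda_i h_{iik}^2$-type terms, and $|\nabla f_3|^2$ explicitly.

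The next step is to bound $|\nabla^2 h|^2$ from below, using Cauchy--Schwarz on the components $h_{ijkl}$ together with the Ricci identities $h_{ijkl}-h_{ijlk}=$ curvature terms. Integrating over $M$ and using $\int\Delta f_3 = 0$ and $\int \Delta (f_3^2)=0$, I aim for an inequality of the shape
\[
\int_M (S-6)\,\Phi(S,f_3) \le 0, \qquad \Phi>0 \text{ when } S>3 .
\]
This would force $S\le 6$, contradicting $S>6$. Peng--Terng's gap $S>3+\frac{1}{12\cdot 3}$ may be used, if needed, to keep $S$ away from $3$ in the positivity argument.

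\textbf{Main obstacle.} I expect the hardest step to be proving this integral inequality with the sharp threshold $6$. The algebraic lower bound for $|\nabla^2 h|^2$ must be tight enough, and the focal set where $f_3^2=S^3/6$ (the frame degenerates there) must be handled. My first approach to the focal set is to show it has measure zero, or that the relevant identities extend continuously across it. If that fails, I would restrict to the region $\{f_3^2< S^3/6-\varepsilon\}$, integrate by parts with a cutoff, and control the boundary terms using $|\nabla f_3|^2$ and the explicit form of $\Delta f_3$.
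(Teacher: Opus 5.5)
\textbf{Verdict: genuine gap.} Your reduction is the right one, and it matches how this statement is assembled in the literature. The paper gives no proof of its own; it quotes Chang \cite{Ch}, with de Almeida--Brito \cite{AB} covering $R_M\ge 0$. Since $R_M=6-S$, the case $S\le 6$ is exactly de Almeida--Brito. M\"unzner's restriction $g\in\{1,2,3\}$ for $n=3$ then leaves the three listed models. The problem is the case $S>6$, which is the entire content of Chang's theorem, and there your proposal contains no argument. The inequality $\int_M (S-6)\Phi(S,f_3)\le 0$ with $\Phi>0$ is only announced as a target. You do not say which estimate produces it, nor how the focal set $\{f_3^2=S^3/6\}$ is handled. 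So the step that carries the theorem is missing.

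The tools you propose also do not point toward an upper bound on $S$. With $S$ constant, $\tfrac12\Delta|\nabla h|^2=0$ gives Peng--Terng's identity $|\nabla^2 h|^2=(S-9)S(S-3)+3(A-2B)$, with $A,B$ as in Section 2. Feeding a Ricci-identity lower bound for $|\nabla^2 h|^2$ into this, together with $\int_M\Delta f_3=0$, is exactly the Peng--Terng scheme. In dimension $3$, what that scheme is known to give is the lower gap $S>3\Rightarrow S\ge 6$; it excludes values of $S$ \emph{below} $6$, not above. Nothing in your outline identifies the additional identity that would reverse the direction.

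The device this paper uses for the analogous negative-scalar-curvature case in dimension $4$ does not help either. In Subsection 4.2 the weights $\mathcal W_{ij}=\lambda_i^2+\lambda_i\lambda_j+\lambda_j^2-\tfrac S2$ make the curvature term $\mathcal R=\tfrac S2-6$ positive. In dimension $3$ this is vacuous: $\lambda_1+\lambda_2+\lambda_3=0$ gives $\lambda_i^2+\lambda_i\lambda_j+\lambda_j^2=\tfrac12(\lambda_1^2+\lambda_2^2+\lambda_3^2)=\tfrac S2$ for $i\neq j$, so $\mathcal W_{ij}\equiv 0$. Excluding $R_M<0$ in $\mathbb{S}^4(1)$ therefore needs a genuinely different argument, namely Chang's, and your proposal supplies no substitute for it.
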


However, if the compactness assumption is dropped, the classification remains a challenging open problem, known as Bryant's Conjecture \cite{Ch}:

\begin{conjecture}[Bryant's Conjecture]
A complete minimal hypersurface of constant scalar curvature in $\mathbb{S}^4(1)$ is isoparametric.
\end{conjecture}

For dimension $n=4$, progress has historically relied on additional algebraic or geometric assumptions. Tang and Yang \cite{TY} proved a rigidity theorem assuming constant $f_3$ and a constant number of distinct principal curvatures. Latter, Deng, Gu, and Wei \cite{DGW} utilized delicate local analysis to remove the non-negative scalar curvature assumption in the work of Lusala, Scherfner, and Sousa Jr. \cite{LSS}, obtaining a classification for Willmore hypersurfaces.

Spruck and XIao \cite{SX} established a classification for closed minimal hypersurfaces $M^4$ in $\mathbb{S}^5(1)$ with constant $f_3$ and non-negative constant scalar curvature. Most notably, He, Xu, and Zhao \cite{HXZ} recently removed the non-negative scalar curvature assumption, thereby completing the classification for $n=4$ under the sole condition of a constant $f_3$:
\begin{theorem}\emph{(He-Xu-Zhao \cite{HXZ})}
Any closed minimal hypersurface $M^4$ in $\mathbb{S}^5(1)$ with constant scalar curvature and constant $f_3$ must be isoparametric. Specifically, $M^4$ is either an equatorial 4-sphere, a Clifford torus, or a Cartan minimal hypersurface. Consequently, $S\in\{0, 4, 12\}$.
\end{theorem}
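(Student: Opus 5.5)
The plan is to reduce the statement to showing that the principal curvatures are constant, and then read off the list from M\"unzner's classification. Write $\lambda_1,\dots,\lambda_4$ for the principal curvatures and $f_k=\sum_i\lambda_i^k$. The hypotheses fix three of the four power sums: $f_1=0$, $f_2=S$ and $f_3$ are constant. By Newton's identities the characteristic polynomial of the shape operator is then determined by one remaining function, the Gauss--Kronecker curvature $K=\lambda_1\lambda_2\lambda_3\lambda_4$, or equivalently $f_4=\tfrac12S^2-4K$. So it suffices to show that $f_4$ (or $K$) is constant. Then $M^4$ is isoparametric.

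\textbf{Classification step.} Once $M^4$ is isoparametric, with $g$ distinct principal curvatures:
\begin{itemize}
\item $g=1$ gives the equatorial sphere;
\item $g=2$ gives, in the minimal case, the Clifford products $\mathbb{S}^2(\frac{\sqrt2}{2})\times \mathbb{S}^2(\frac{\sqrt2}{2})$ and $\mathbb{S}^1(\frac12)\times\mathbb{S}^3(\frac{\sqrt3}{2})$, both with $S=4$;
\item $g=3$ is impossible, since it forces $n=3m$ with $m\in\{1,2,4,8\}$;
\item $g=6$ forces $n\in\{6,12\}$ and is also impossible;
\item $g=4$ with multiplicities $(1,1)$ gives the minimal isoparametric hypersurface with $S=(g-1)n=12$.
\end{itemize}
This yields the list and $S\in\{0,4,12\}$.

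\textbf{Pointwise constraints.} Work in a local orthonormal frame diagonalizing $h$, with $h_{ijk}$ the components of $\nabla h$. Differentiating the constant power sums gives, for each $k$,
\[
\sum_i h_{iik}=0,\qquad \sum_i\lambda_i h_{iik}=0,\qquad \sum_i\lambda_i^2h_{iik}=0.
\]
Since $S$ is constant, Simons' identity gives $|\nabla h|^2=\sum h_{ijk}^2=S(S-4)$, also a constant. Consider the open set $U$ where the four $\lambda_i$ are distinct. There the vector $(h_{11k},\dots,h_{44k})$ is orthogonal to the three Vandermonde rows, so
\[
h_{iik}=c_k\prod_{j\ne i}(\lambda_i-\lambda_j)^{-1}
\]
for a scalar $c_k$. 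Moreover $\nabla_k f_4=4\sum_i\lambda_i^3h_{iik}=4c_k$. Hence $\nabla f_4$ is encoded by $(c_k)$, and the aim becomes to show that $c_k\equiv0$.

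\textbf{Integral identities.} Next use the second-order formulas:
\begin{itemize}
\item the Laplacian of $f_3$, which vanishes: $\tfrac13\Delta f_3=(4-S)f_3+2\sum_{i,j,k}\lambda_i h_{ijk}^2$ (schematically);
\item the Laplacian of $S$ at the third-order level, i.e.\ $\sum h_{ijkl}^2$ together with the Ricci identities, in the spirit of Peng--Terng and Cheng--Yang;
\item the formula for $\Delta f_4$.
\end{itemize}
Substituting the above structure of $h_{iik}$ and integrating over $M$, I expect to obtain an identity of the form $\int_M Q(\lambda)\,|\nabla f_4|^2=0$, or $\int_M|\nabla f_4|^2\Phi=0$, with $\Phi$ of fixed sign. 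It should be combined with the Cheng--Yang estimate $S>n+\tfrac23n$ from Theorem~\ref{YC3}, which guarantees that $S>4$ forces $S$ to be large. The off-diagonal terms $h_{ijk}$ with $i,j,k$ distinct are controlled using $|\nabla h|^2=S(S-4)$ and Codazzi symmetry.

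\textbf{Main obstacle.} I expect two difficulties.
\begin{itemize}
\item \emph{Sign control.} Showing that the coefficient is sign-definite for all admissible values of $(S,f_3,f_4)$ requires a careful algebraic analysis of the quartic with three fixed power sums.
\item \emph{The degenerate set $M\setminus U$.} Where eigenvalues coincide, the Vandermonde argument breaks down. There I would use real-analyticity of minimal hypersurfaces: either the multiplicity pattern holds on an open set, which reduces to a lower-$g$ case handled directly (e.g.\ by the Otsuki-type argument showing constant multiplicities force constant principal curvatures), or $U$ is dense and the integral identity extends by continuity.
\end{itemize}
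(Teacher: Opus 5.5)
The paper does not prove this theorem; it is quoted from He--Xu--Zhao. The natural benchmark is therefore the paper's own argument for its constant-$\mathcal{K}$ analogue.

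\textbf{What is correct.} Your preliminary steps hold. Once $f_4$ is constant, all four power sums are constant, so the principal curvatures are constant. The M\"unzner-based list is right. On the set $U$ of four distinct principal curvatures, $h_{iik}=c_k\prod_{j\ne i}(\lambda_i-\lambda_j)^{-1}$ with $e_k(f_4)=4c_k$.

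\textbf{The gap.} This is only the reduction. The entire content of the theorem is showing $c_k\equiv 0$, and your proposal gives no argument for it: the identity $\int_M Q\,|\nabla f_4|^2=0$ is only hoped for. Moreover, the identities you list cannot produce it as they stand. Besides the $h_{iik}$, which your relation expresses through $c_k$, $\nabla h$ has four fully crossing components $h_{ijk}$ ($i,j,k$ distinct), and these do not drop out. In $2A+B$, hence in $\int_M\Delta f_4=0$, the crossing term omitting the index $l$ carries the weight $2(\tfrac32 S-\lambda_l^2)$. By contrast, $|\nabla h|^2=S(S-4)$ and $\Delta f_3=0$ only supply the weights $6$ and $-2\lambda_l$ on the same terms. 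No combination $\alpha+\beta\lambda_l$ can equal $\tfrac32 S-\lambda_l^2$ at four distinct values of $\lambda_l$. The third-order Simons-type formula only brings in new unknowns $h_{ijkl}$. This package of identities is exactly what produced pinching results such as Theorem~\ref{YC3}, not rigidity.

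\textbf{The missing idea.} Spruck--Xiao (constant $f_3$, under $R_M\ge 0$) and this paper (Section~4) instead integrate a 3-form built from the connection forms $\omega_{ij}=\sum_k h_{ijk}\omega_k/(\lambda_i-\lambda_j)$. The form is $\Phi=\sum_{i<j}\theta_{ij}$, or a weighted version $\sum_{i<j}W_{ij}\theta_{ij}$. Its exterior derivative behaves as follows:
\begin{itemize}
\item the fully crossing terms cancel identically, cf.\ \eqref{crossing};
\item the remaining quadratic terms are of type $h_{iik}h_{jjk}$, and your Vandermonde relation turns them into $\sum_k L_k c_k^2$;
\item the curvature part is $\sum_{i<j}R_{ijij}=\tfrac12(12-S)$.
\end{itemize}
After a sign analysis of the $L_k$, Stokes' theorem then gives a genuinely sign-definite integral identity. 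This is combined with further global input; in this paper that means Gauss--Bonnet and Theorem~\ref{YC3}. Removing $R_M\ge 0$ is precisely the additional contribution of He--Xu--Zhao.

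\textbf{The degenerate set.} Your treatment of $M\setminus U$ is also insufficient. Analyticity makes $M\setminus U$ thin. However, any identity obtained on $U$ from frame-dependent quantities ($c_k$, $\omega_{ij}$) involves factors $(\lambda_i-\lambda_j)^{-1}$ that blow up there. So the boundary contributions do not disappear ``by continuity.'' Controlling them requires a cut-off argument with a sign analysis of the singular terms, as in Lemma~\ref{lemma:dfdg} and the proof of Lemma~\ref{3roots:S>12}. Points with two distinct principal curvatures also need a separate pointwise argument, as in Section~3.
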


Alternatively, instead of restricting $f_3$, Cui \cite{Cui} investigated the vanishing Gauss-Kronecker curvature case:
\begin{theorem}\emph{(Cui \cite{Cui})}\label{Cui}
Let $M^4$ be a closed minimal hypersurface in $\mathbb{S}^5(1)$ with constant scalar curvature and zero Gauss-Kronecker curvature. Then $M$ is totally geodesic.
\end{theorem}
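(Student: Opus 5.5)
The statement is Cui's theorem: a closed minimal $M^4\subset\mathbb{S}^5(1)$ with constant scalar curvature and $K=\lambda_1\lambda_2\lambda_3\lambda_4\equiv0$ is totally geodesic. I plan to argue pointwise in terms of the principal curvatures, then globally via Simons-type integral identities.

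\textbf{Algebraic reduction.} By the Gauss equation, constant scalar curvature means $S=\sum\lambda_i^2$ is constant. If $S=0$ we are done. If $S=4$, the Chern--do Carmo--Kobayashi/Lawson classification gives a Clifford torus; neither $\mathbb{S}^2(\tfrac{\sqrt2}{2})\times\mathbb{S}^2(\tfrac{\sqrt2}{2})$ (curvatures $\pm1$) nor $\mathbb{S}^1(\tfrac12)\times\mathbb{S}^3(\tfrac{\sqrt3}{2})$ (curvatures $\sqrt3$ and $-1/\sqrt3$, all nonzero) has $K=0$, a contradiction. So we may assume $S>4$, and then Simons' inequality together with the Peng--Terng gap gives $S>4+\delta$. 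Since $K\equiv0$, at every point some principal curvature vanishes, say $\lambda_4=0$. The remaining $\lambda_1,\lambda_2,\lambda_3$ satisfy $\sum\lambda_i=0$ and $\sum\lambda_i^2=S$, so they are the roots of $t^3-\tfrac S2 t-\tfrac{f_3}{3}$. In particular $\sigma_2=-S/2$ and $\sigma_3=f_3/3$ determine all symmetric functions, and the only free pointwise quantity is $f_3$, which satisfies $f_3^2\le S^3/6$.

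\textbf{Differential identities.} First I will differentiate $\sigma_4=K=0$ and $\sigma_2=\mathrm{const}$ along frames. This gives $\sum_i h_{iik}=0$, $\sum_i\lambda_i h_{iik}=0$ and $\sum_i(\prod_{j\ne i}\lambda_j)h_{iik}=0$. Wherever $\lambda_1\lambda_2\lambda_3\ne0$, the last identity forces $h_{44k}=0$ for all $k$; where the $\lambda_i$ are distinct it gives strong constraints on the $h_{iik}$. Next I will use the Simons identity. Because $S$ is constant it reads $|\nabla h|^2=S(S-4)$. Then I will compute $\tfrac12\Delta f_3$ and $\tfrac12\Delta f_4$ using $f_4=\sum\lambda_i^4=S^2/2$, which is constant here: it follows from $\lambda_4=0$ and the cubic relation, since Newton's identities give $f_4=\sigma_1 f_3-\sigma_2 f_2+\sigma_3 f_1-4\sigma_4=S^2/2$. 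Hence $f_4$ is constant. Theorem~\ref{YC3} (Cheng--Yang) then gives $S>4+\tfrac{8}{3}$. I will then exploit $\Delta f_4=0$ fully. The standard formula is
\[
0=\tfrac12\Delta f_4=\text{(gradient terms in }h_{ijk})+(4-S)f_4+\text{(curvature terms)},
\]
with $f_4=S^2/2$. Combining this with the integrated $\Delta f_3$ identity and the constraint $h_{44k}=0$ should express $\int|\nabla h|^2$ in two incompatible ways unless $S=0$.

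\textbf{Main obstacle.} The hard step is the set where the $\lambda_i$ are not distinct or where two principal curvatures vanish. There $\lambda_1\lambda_2\lambda_3=0$, so the derivative of $K$ gives no information. If two of the curvatures vanish, the third nonzero one forces the remaining two to be $\pm\sqrt{S/2}$ with $f_3=0$. I would show that this set has empty interior, or handle it by continuity: the integrands in the identities are continuous, so it suffices to establish the pointwise inequalities on the open dense set where the multiplicities are locally constant. With $h_{44k}=0$, the third-order terms reduce to those of a $3\times3$ traceless Codazzi tensor with fixed norm. I expect that careful estimation of $\sum h_{ijk}^2\lambda_i\lambda_j$ against $|\nabla h|^2$ in that reduced setting yields $\int (S-4)S\cdot c\le0$ for some $c>0$, forcing $S=0$.
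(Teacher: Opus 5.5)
The paper does not prove this statement; it quotes it from Cui. So your proposal has to stand on its own, and it does not yet. Your preliminary reductions are correct:
\begin{itemize}
\item $f_4=\tfrac12S^2$ is constant by Newton's identity, so Cheng--Yang gives $S>\tfrac{20}{3}$;
\item $\lambda_1\lambda_2\lambda_3h_{44k}=0$ gives $h_{44k}=0$ on $\Omega:=\{f_3\neq0\}$, which (for $S>0$) is exactly the set where $0$ is a simple principal curvature.
\end{itemize}
But the step that carries the whole theorem is only announced (``should express $\int|\nabla h|^2$ in two incompatible ways'', ``I expect \dots\ $\int(S-4)S\,c\le0$''). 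The mechanism you anticipate cannot come from $\Delta f_4=0$.

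Expand $2A+B=\tfrac S2|\nabla h|^2$ at a point of $\Omega$ with $\lambda_4=0$. There $(h_{11k},h_{22k},h_{33k})$ is a multiple of $(\lambda_3-\lambda_2,\lambda_1-\lambda_3,\lambda_2-\lambda_1)$, whose squared entries are $2S-3\lambda_i^2$, and $\sum_{i\le3}\lambda_i^4=\tfrac12S^2$. Every component $h_{ijk}$ with $i,j,k\le3$ then drops out: the symmetrized coefficient of $h_{123}^2$ is $\tfrac13(2S+\sigma_2)-\tfrac S2=0$, and the families $h_{iik}$ cancel exactly. The identity reduces to
\[
\frac S3\sum_{i=1}^3h_{ii4}^2+\lambda_3^2h_{124}^2+\lambda_2^2h_{134}^2+\lambda_1^2h_{234}^2=0 .
\]
So $\Delta f_4=0$ says nothing about the $\{1,2,3\}$-block, which may carry all of $|\nabla h|^2=S(S-4)$. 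No estimate of the form $\int(S-4)S\,c\le0$ can be extracted from it. The integrated $\Delta f_3$ identity, $\int\sum\lambda_ih_{ijk}^2=\tfrac{S-4}{2}\int f_3$, involves the sign-indefinite $f_3$, and you give no way to combine the two. Your description of the reduced setting is also inaccurate: $h_{44k}=0$ leaves the components $h_{ii4}$ and $h_{ij4}$ with $i,j\le3$, and these are exactly the ones that survive.

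The missing idea is geometric. For $S>0$, the display gives $(\nabla h)(\cdot,\cdot,e_4)=0$ on $\Omega$. Differentiating $h(e_4,\cdot)=0$ then yields $h(\nabla_Xe_4,\cdot)=0$, so $\nabla_Xe_4\in\ker h\cap e_4^\perp=0$. Thus $e_4$ is parallel on $\Omega$, so $R(\cdot,\cdot)e_4=0$, contradicting the Gauss equation $R_{i4i4}=1+\lambda_i\lambda_4=1$. Hence $\Omega=\emptyset$.

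This also shows your treatment of the degenerate set is backwards. $\{f_3=0\}$ cannot be shown to have empty interior or absorbed by a continuity argument, since $h_{44k}=0$ is not available there. It is the case that remains: the principal curvatures are the constants $\pm\sqrt{S/2},0,0$, and it needs a separate argument. For instance, the top principal curvature is then simple everywhere, so $\chi(M)=0$. The Gauss--Bonnet identity used in Lemma~\ref{lemmaGB}, with $f_4=\tfrac12S^2$ (i.e.\ $\mathcal{K}=0$), then forces $S=6$, contradicting $S>\tfrac{20}{3}$ from Theorem~\ref{YC3}. Alternatively, invoke Cartan's theorem that three distinct constant principal curvatures must have equal multiplicities, which rules out the multiplicities $(1,2,1)$.
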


Beyond the minimal case, analogous rigidity results hold for hypersurfaces with constant mean curvature. Almeida and Brito \cite{AB}, together with Chang \cite{Ch1}, proved that a closed hypersurface $M^3\subset \mathbb{S}^4(1)$ with constant mean curvature and constant scalar curvature is isoparametric. This was later generalized by Almeida, Brito, and Sousa Jr. \cite{ABS}, and extended to higher dimensions by Tang, Wei, and Yan \cite{TWY}. More recently, Tang and Yan \cite{TYan} showed that the constancy of the first $(n-1)$ power sums of principal curvatures already implies isoparametricity:

\begin{theorem}[Tang--Yan \cite{TYan}]
Let $M^n$ $(n>3)$ be a closed hypersurface in $\mathbb{S}^{n+1}(1)$. Assume that
$R_M \ge 0$ and $\sum_{i=1}^n \lambda_i^k$ is constant for each $k=1,\ldots,n-1$,
where $\lambda_1,\ldots,\lambda_n$ are the principal curvatures of $M$. Then $M$ is isoparametric. Moreover, if $M$ has $n$ distinct principal curvatures at some point, then $R_M \equiv 0$.
\end{theorem}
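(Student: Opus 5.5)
The plan is to show that the Gauss--Kronecker curvature $\sigma_n=\lambda_1\cdots\lambda_n$ is constant. Once that is done, every elementary symmetric function of the principal curvatures is constant. Then the characteristic polynomial
\[
P(t)=t^n-\sigma_1t^{n-1}+\cdots+(-1)^n\sigma_n
\]
has constant coefficients, so the principal curvatures are constant and $M$ is isoparametric.

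\textbf{Step 1: reduce to one varying coefficient.} By Newton's identities, constancy of $\sum_i\lambda_i^k$ for $k=1,\dots,n-1$ is equivalent to constancy of $\sigma_1,\dots,\sigma_{n-1}$. So along $M$ only the constant term of $P$ varies. The Gauss equation gives $R_M=n(n-1)+\sigma_1^2-\sum_i\lambda_i^2$, which is therefore also constant.

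\textbf{Step 2: local analysis where the $\lambda_i$ are simple.} I work on the open set $U$ where the number of distinct principal curvatures is locally constant. On the part of $U$ where all $\lambda_i$ are simple they are smooth, and differentiating $P(\lambda_i)=0$ gives
\[
P'(\lambda_i)\,d\lambda_i=(-1)^{n+1}d\sigma_n .
\]
Hence $h_{iik}=e_k(\lambda_i)=\pm e_k(\sigma_n)/P'(\lambda_i)$. The Lagrange interpolation identities $\sum_i\lambda_i^m/P'(\lambda_i)=0$ for $0\le m\le n-2$ are consistent with $\sigma_1,\dots,\sigma_{n-1}$ being constant. They give strong algebraic control on the diagonal entries $h_{iik}$. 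The Codazzi equations and $\omega_{ij}=\sum_k h_{ijk}\omega_k/(\lambda_i-\lambda_j)$ then express the off-diagonal derivatives in terms of $\nabla\sigma_n$ and the connection.

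\textbf{Step 3: a Simons-type integral identity.} I write the Laplacian of a suitable power sum, for instance $\frac12\Delta\sum_i\lambda_i^{2}$ (constant) or a Newton-tensor divergence $\operatorname{div}(T_{n-1}\nabla\,\cdot)$. Its left side vanishes, and its right side has the form
\[
\sum_{i,j,k}(\text{weights})\,h_{ijk}^2+\sum_{i<j}(\lambda_i-\lambda_j)^2(1+\lambda_i\lambda_j)(\text{weights}).
\]
The curvature term is $\sum_{i<j}(\lambda_i-\lambda_j)^2K_{ij}$ with sectional curvatures $K_{ij}=1+\lambda_i\lambda_j$. Integrating over the closed $M$ yields an identity of the form $\int_M \Phi\,|\nabla\sigma_n|^2=-\int_M\Psi$. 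Here $\Phi>0$ by Step 2, and $\Psi$ is tied to the curvature sum.

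\textbf{Step 4: use $R_M\ge0$.} With $R_M\ge0$ constant, both sides must vanish, so $\nabla\sigma_n=0$ on the open dense generic set, hence everywhere. For the second assertion: if $n$ distinct principal curvatures occur at one point, the equality case forces the curvature term to vanish identically near that point. This gives $R_M=0$ there and hence everywhere, since $R_M$ is constant.

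\textbf{Main obstacle.} I expect Step 3 to be the hardest part. One must choose the right power sum or Newton tensor so that the gradient terms $h_{ijk}^2$ appear with a definite sign. This requires the interpolation identities to kill the mixed terms. I would first try $\operatorname{div}(T_{n-1}\nabla\sigma_n)$, because $T_{n-1}$ is the linearization of $\sigma_n$ and its eigenvalues are $\partial\sigma_n/\partial\lambda_i$. A second difficulty is the non-generic set where multiplicities jump. There I would argue by density together with the smoothness of $\sigma_n$.
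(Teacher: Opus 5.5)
\textbf{There is a genuine gap, in Step 3.} Note first that the paper does not prove this theorem; it quotes it from Tang--Yan. Still, the mechanism such a proof needs is visible in the paper's Section 4, and your Step 3 misses it.

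The problem is not only that Step 3 is unspecified: the route you propose cannot work. Any identity coming from $\Delta F(h)$, with $F$ a symmetric function of the $\lambda_i$, gets its zero-order terms from commuting covariant derivatives. The same is true of $\operatorname{div}(T_{n-1}\nabla\sigma_n)$. Since $\Delta h_{ii}=\sum_j(\lambda_i-\lambda_j)K_{ij}$ when $H$ is constant, the curvature term is $\frac12\sum_{i,j}(F_i-F_j)(\lambda_i-\lambda_j)K_{ij}$. These weights vanish on $\{\lambda_i=\lambda_j\}$, so the term is never the unweighted sum $\sum_{i<j}K_{ij}=\frac12R_M$, and $R_M\ge 0$ gives you no sign. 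For minimal $M$, Simons' identity just says $|\nabla h|^2=S(S-n)$, which is compatible with every $n\le S\le n(n-1)$, i.e.\ with $R_M\ge0$.

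These identities also contain the fully crossing terms $h_{ijk}^2$ ($i,j,k$ distinct) with positive weights. Codazzi determines only the $h_{iik}$ in terms of $\nabla\sigma_n$. The relation $\omega_{ij}=\sum_k h_{ijk}\omega_k/(\lambda_i-\lambda_j)$ is the definition of the connection forms, not a new equation. So you cannot reduce to an identity $\int\Phi|\nabla\sigma_n|^2=-\int\Psi$, and the claim ``$\Phi>0$ by Step 2'' is unsupported.

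\textbf{The missing idea is the de Almeida--Brito type $(n-1)$-form built from connection forms.} It is
$\Theta=\sum_{i<j}\pm\,\omega_1\wedge\cdots\widehat{\omega_i}\cdots\widehat{\omega_j}\cdots\wedge\omega_n\wedge\omega_{ij}$, defined where the $\lambda_i$ are distinct; for $n=4$ it is exactly the paper's $\Phi$.
\begin{itemize}
\item Through the structure equation $d\omega_{ij}=\sum_k\omega_{ik}\wedge\omega_{kj}-\frac12\sum_{k,l}R_{ijkl}\,\omega_k\wedge\omega_l$, the form $d\Theta$ contains $-\frac12R_M\,\mathrm{vol}$ with weight one.
\item The crossing terms cancel identically; this is the paper's crossing identity with all weights equal to $1$.
\item After substituting $h_{iik}=(-1)^{n+1}e_k(\sigma_n)/P'(\lambda_i)$, you must \emph{prove} that all remaining coefficients have the sign opposite to the curvature term. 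This is the real algebraic work; compare the polynomial sign checks in the paper's appendices for $n=4$.
\end{itemize}
Stokes then forces both the gradient terms and $R_M$ to vanish, which also gives the ``moreover'' part.

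\textbf{Your global step also needs repair.} The set where all $\lambda_i$ are simple need not be dense, or even nonempty.
\begin{itemize}
\item If it is empty: at a multiple root $\lambda$ of $P$, $\lambda$ is a critical point of the fixed polynomial $P(t)-(-1)^n\sigma_n$. Hence $\sigma_n$ takes only finitely many values, and so it is constant on connected $M$.
\item If it is nonempty: $\Theta$ blows up at its boundary. Stokes then requires a cut-off estimate near the coincidence set, as the paper does in Section 5 for $dg\wedge\varphi$ and $df\wedge\varphi$. Smoothness of $\sigma_n$ alone does not control that boundary contribution.
\end{itemize}
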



\subsection{Main Results}

In this paper, we focus on closed minimal hypersurfaces in $\mathbb{S}^{5}(1)$ with constant scalar curvature. While He, Xu, and Zhao \cite{HXZ} approached this classification by assuming a constant $f_3$, it is also geometrically natural to ask whether replacing this with the assumption of constant Gauss-Kronecker curvature is sufficient to force the hypersurface to be isoparametric.
Our main result gives an affirmative answer to this question, which improves upon Theorem \ref{Cui} and provides further evidence for Chern's Conjecture:
\begin{theorem}[Main Theorem]
Let $M^4$ be a closed minimal hypersurface in $\mathbb{S}^5(1)$ with constant scalar curvature and constant Gauss-Kronecker curvature. Then $M^4$ must be isoparametric. Specifically, $M^4$ is either:
\begin{enumerate}
    \item an equatorial 4-sphere ;
    \item a Clifford torus $\mathbb{S}^2\left(\frac{\sqrt{2}}{2}\right)\times \mathbb{S}^2\left(\frac{\sqrt{2}}{2}\right)$ or $\mathbb{S}^1\left(\frac{1}{2}\right)\times \mathbb{S}^3\left(\frac{\sqrt{3}}{2}\right)$;
    \item or a Cartan minimal hypersurface.
\end{enumerate}
In particular, the squared norm of the second fundamental form $S$ can only be $0, 4, 12$.
\end{theorem}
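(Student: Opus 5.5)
The plan is to reduce to the He--Xu--Zhao theorem by showing that $f_3=\sum_i\lambda_i^3$ must be constant. For $n=4$ and $H=0$, Newton's identities give
\[
\sigma_1=0,\qquad \sigma_2=-\tfrac{S}{2},\qquad \sigma_3=\tfrac{f_3}{3},\qquad \sigma_4=K:=\lambda_1\lambda_2\lambda_3\lambda_4,
\]
and
\[
f_4=\sum_i\lambda_i^4=\tfrac{S^2}{2}-4K.
\]
Since $S$ and the Gauss--Kronecker curvature $K$ are constant, $f_4$ is constant, and the only free symmetric function is $f_3$. If $S\le 4$, Simons' inequality together with the results of Chern--do Carmo--Kobayashi and Lawson already yields the totally geodesic sphere or a Clifford torus. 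If $K=0$, Theorem \ref{Cui} applies. So we may assume $S>4$ and $K\neq0$ is constant. By Theorem \ref{YC3} (the $f_4$ version), we then get $S>4+\frac{8}{3}$.

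The core step is to derive differential and integral identities for $f_3$. I would use a local orthonormal frame and the covariant derivatives $h_{ijk}$, $h_{ijkl}$, and differentiate the constancy of $S$, $f_4$ and $K$:
\[
\sum_i\lambda_ih_{iik}=0,\qquad \sum_i\lambda_i^3h_{iik}=0,\qquad \sum_i h_{iik}=0,
\]
the last from minimality. Where the principal curvatures are distinct, these three linear conditions on $(h_{11k},\dots,h_{44k})$, together with the Vandermonde structure in the $\lambda_i$, force $(h_{iik})_i$ to be proportional to a fixed vector determined by the $\lambda$'s. Consequently $\nabla f_3$, with components $3\sum_i\lambda_i^2h_{iik}$, is controlled by a single scalar per direction.

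I would then combine several identities. First, Simons' identity with $S$ constant gives
\[
|\nabla h|^2=S(S-4).
\]
Next I would compute the Laplacians $\tfrac12\Delta f_3$ and $\tfrac14\Delta f_4=0$ via the Ricci identities. These produce
\[
\sum_{i,j,k}\lambda_i^2h_{ijk}^2\ \text{ and }\ \sum_{i,j,k}\lambda_ih_{ijk}^2\quad\text{in terms of } S,\ f_3,\ f_4,\ \text{and } \sum_{i,j}\lambda_i\lambda_j(\lambda_i-\lambda_j)^2,
\]
the last being a polynomial in $S$, $f_3$ and $K$. Integrating, as in Peng--Terng and Cheng--Yang, should yield relations of the form $\int_M P(f_3)=0$ and $\int_M Q(f_3)\,|\nabla f_3|^2\le 0$, with $P,Q$ explicit polynomials whose coefficients depend on $S$ and $K$.

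The goal is to show that $|\nabla f_3|^2$ times a positive quantity integrates to something nonpositive, forcing $f_3$ to be constant, after which He--Xu--Zhao gives the classification and $S\in\{0,4,12\}$. The hard cases are points with repeated principal curvatures and points where the coefficient $Q$ might vanish. For repeated curvatures I would use the fact that $K\neq0$ excludes zero eigenvalues and restricts the possible multiplicity patterns, $(2,2)$, $(1,1,2)$ or $(1,3)$. Then I would show, via the Codazzi equations, that the set of nondistinct points either has empty interior or forces the hypersurface to be isoparametric locally, and use analyticity of minimal hypersurfaces to extend.

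I expect the main obstacle to be the sign analysis in the third-order (Okumura-type) estimate. One needs a sharp pointwise inequality bounding $\sum_{i,j,k}\lambda_i^2 h_{ijk}^2$ in terms of $|\nabla h|^2$ and $\lambda$-data on the constraint set
\[
\{\textstyle\sum_i\lambda_i=0,\ \sum_i\lambda_i^2=S,\ \prod_i\lambda_i=K\},
\]
a one-parameter family parametrized by $f_3$. I would first attempt this by explicit calculus along that curve, using the bound $S>20/3$ to fix signs. If that fails, I would fall back on a local analysis in the style of Deng--Gu--Wei on the open set where $\nabla f_3\neq0$, showing that the frame structure equations become inconsistent there.
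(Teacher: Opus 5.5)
Your preliminary reductions are correct: $f_4=\tfrac12S^2-4\mathcal K$, the cases $S\le 4$ and $\mathcal K=0$ are settled by the literature, Theorem~\ref{YC3} gives $S>20/3$, and at points with four distinct principal curvatures every $h_{iik}$ is a multiple of $h_{44k}$. Proving $f_3$ constant would also suffice via He--Xu--Zhao. But the proposal stops exactly where the difficulty starts. The claim that Peng--Terng/Cheng--Yang type integration ``should yield'' $\int_M Q(f_3)|\nabla f_3|^2\le 0$ with $Q>0$ is never derived, and it is essentially the whole theorem. With $S$ and $f_4$ constant, those identities are what Cheng--Yang used, and they give only the gap $S>20/3$.

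The identities you list also do not do what you say. $\Delta f_4=0$ gives only the single pointwise relation $2A+B=(S-4)f_4$, so it cannot isolate $\sum\lambda_i^2h_{ijk}^2$. $\Delta f_3$ gives only an integral relation, since $f_3$ is not known to be constant. And $\sum_{i,j}\lambda_i\lambda_j(\lambda_i-\lambda_j)^2=-2S^2$ is constant, so no $f_3$-dependence enters there. Your treatment of repeated eigenvalues (``empty interior or locally isoparametric, then analyticity'') is likewise not an argument. Note also that $\mathcal K\neq0$ does not restrict the multiplicity patterns at all.

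The paper's proof rests on three ingredients that have no counterpart in your plan.
\begin{enumerate}
\item When some principal curvature is globally simple, Poincar\'e--Hopf gives $\chi(M^4)=0$ and Gauss--Bonnet gives $S=6(\mathcal K+1)$. Together with $S>20/3$ this forces $\mathcal K>0$ and the sign pattern $\lambda_1<\lambda_2<0<\lambda_3<\lambda_4$.
\item Stokes' theorem is applied to the weighted $3$-forms $\varphi=\sum_{i<j}(W_{ij}-\tfrac S2)\theta_{ij}$ and $\Psi=\sum_{i<j}W_{ij}\theta_{ij}$, where $W_{ij}=\lambda_i^2+\lambda_i\lambda_j+\lambda_j^2$. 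The fully crossing $h_{ijk}^2$ terms cancel, and the coefficients of $h_{44k}^2$ are negative on that sign region. Thanks to the first ingredient, the zeroth-order terms are exactly $\tfrac12S-6$ and $-\tfrac14(S-12)(S-2)$. This yields first $S\le12$, then $S=12$ with $h_{iik}\equiv0$.
\item Non-generic points are handled separately. At a point with two distinct principal curvatures, $\Delta f_4=0$ forces $\nabla h=0$ there, hence $S=4$. The pattern $\lambda_2=\lambda_3$ would force $\mathcal K\le0$, contradicting Lemma~\ref{3:Kfeifu}. The patterns $\lambda_1=\lambda_2$ and $\lambda_3=\lambda_4$ are ruled out by the cut-off argument built on Lemma~\ref{lemma:dfdg}.
\end{enumerate}
The essential missing idea is an integral identity whose gradient part \emph{and} zeroth-order part both have controlled sign. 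Your fallback options (calculus along the constraint curve, or a Deng--Gu--Wei type local analysis) are not developed far enough to supply it.
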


Importantly, we do not a priori assume a non-negative scalar curvature or a constant number of distinct principal curvatures. We overcome the former by constructing suitable globally defined weighted 3-forms that have a definite sign upon integration, coupled with a rigidity relation between $R_M$ and $S$ when $g=4$. The latter challenge of dealing with mixed cases, is then resolved via a localized cut-off argument.

Very recently, an independent concurrent work by Ge, Liu, Luo, and Yan \cite{GLLY} established the same classification theorem. Although both works share one differential 3-form corresponding to a common weight, the subsequent methodologies are fundamentally different. The proof in \cite{GLLY} partitions the manifold via $f_3$, introduces a second weight generated by $x^{-1}$, and employs global topological tools: the Euler characteristic. By contrast, our proof works directly with the principal curvatures via another shifted 3-form, which is based on direct algebraic estimates for homogeneous polynomials and a cut-off argument.

\vspace{2mm}

The remainder of this paper is organized as follows. In Section 2, we review the fundamental formulas for minimal hypersurfaces in spheres. In Section 3, we analyze the case where there exists a point with exactly two distinct principal curvatures, proving that $M^4$ must be a Clifford torus. Section 4 is devoted to the case where there are four distinct principal curvatures everywhere, where we show that $S=12$ thus $M^4$ is Cartan's minimal hypersurface. In Section 5, we exclude the possibility of having exactly three distinct principal curvatures by deriving a contradiction with the results of Section 4. Finally, the proof of the Main Theorem is completed in Section 6.


\section{Basic formulas for closed minimal hypersurfaces in $\mathbb{S}^{n+1}(1)$}

In this section, we recall some basic formulas for closed minimal hypersurfaces in $\mathbb{S}^{n+1}(1)$, which can be found in \cite{Cui, LSS, PT1, PT3}.

Let $M^n$ be an $n$-dimensional closed hypersurface with constant mean curvature $H$ in $\mathbb{S}^{n+1}(1)$. We choose a local orthonormal frame field $\{e_1, e_2, \cdots, e_{n+1}\}$ such that $\{e_1, e_2, \cdots, e_{n}\}$ is tangent to $M^n$. Let $h_{ij}$ and $H$ denote the components of the second fundamental form and the mean curvature respectively. Then
$$H=\frac{1}{n}\sum_{i}h_{ii}, \quad S=\sum_{i,j}h_{ij}^2, \quad f_3=\sum_{i,j,k}h_{ij}h_{jk}h_{ki}, \quad f_4=\sum_{i,j,k,l}h_{ij}h_{jk}h_{kl}h_{li}.$$

At any fixed point $p\in M$, we can choose an orthonormal frame such that $h_{ij}=\lambda_i\delta_{ij}$ for all $i,j$. Then, at this point $p$, we have
$$H=\frac{1}{n}\sum_{i=1}^n \lambda_i, \quad S=\sum_{i=1}^n \lambda_i^2, \quad f_3=\sum_{i=1}^n \lambda_i^3, \quad f_4=\sum_{i=1}^n \lambda_i^4.$$


Let $h_{ijk}$ and $h_{ijkl}$ be the components of the first and second covariant derivatives of the second fundamental form, respectively. Define $A$ and $B$ by
$$A=\sum_{i,j,k}h_{ijk}^2 \lambda_i^2 \quad \text{and} \quad B=\sum_{i,j,k}h_{ijk}^2 \lambda_i\lambda_j.$$
A straightforward computation gives the following formulas for closed minimal hypersurfaces with constant scalar curvature:
\begin{equation}\label{eq:b1}
\sum_{i,j,k}h_{ijk}^2=S(S-n),
\end{equation}


\begin{equation}\label{eq:b3}
\Delta f_{4}=4 \big((n-S)f_{4}+2A+B\big).
\end{equation}

\vspace{2mm}

Denoting the Gauss-Kronecker curvature by $\mathcal{K} = \lambda_1 \lambda_2 \lambda_3 \lambda_4$, we obtain the following relation between $f_4$ and $\mathcal{K}$ from Newton's identities:
\begin{equation}\label{f_4:K}
    f_4=\frac{1}{2}S^2-4\mathcal{K}.
\end{equation}
Therefore, the assumption that the Gauss-Kronecker curvature $\mathcal{K}$ is constant is equivalent to $f_4 \equiv \text{constant}$.

\vspace{2mm}

From the Gauss equation, the curvature tensor of $M^4$ in $\mathbb{S}^5(1)$ is given by
\begin{equation}\label{Rijkl}
    R_{ijkl} = \delta_{ik} \delta_{jl} - \delta_{il} \delta_{jk} + h_{ik} h_{jl} - h_{il} h_{jk},
\end{equation}
where $\delta_{ij}$ is the Kronecker symbol. In particular, when $i\ne j$, we have
\[R_{ijij} = 1 + \lambda_i \lambda_j.\]
This relation will be used frequently in the subsequent discussion.

\vspace{5mm}

\section{Two distinct principal curvatures at one point}
In this section, we consider the case where there exists a point on the hypersurface with exactly two distinct principal curvatures. More precisely, we prove the following result.
\begin{theorem}\label{theorem3}
    Let $M^4 \hookrightarrow \mathbb{S}^5(1)$ be a closed minimal hypersurface with constant scalar curvature and constant Gauss-Kronecker curvature. If there exists a point with exactly two distinct principal curvatures, then $S=4$ and $M^4$ is the Clifford torus $\mathbb{S}^1\left(\frac{1}{2}\right)\times \mathbb{S}^3\left(\frac{\sqrt{3}}{2}\right)$ or $\mathbb{S}^2\left(\frac{\sqrt{2}}{2}\right)\times \mathbb{S}^2\left(\frac{\sqrt{2}}{2}\right)$.
\end{theorem}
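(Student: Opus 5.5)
The plan is purely algebraic: the hypothesis forces $(S,\mathcal{K})$ to an extremal value, and this rigidity then propagates to every point. Recall from \eqref{f_4:K} that constancy of $\mathcal{K}$ is equivalent to constancy of $f_4$. By hypothesis both $S$ and $\mathcal{K}$ are global constants. I will study the pointwise extremal problem
\[
\text{extremize } \lambda_1\lambda_2\lambda_3\lambda_4 \quad \text{subject to} \quad \sum\lambda_i=0,\ \ \sum\lambda_i^2=S .
\]
The goal is to show that a configuration with exactly two distinct principal curvatures realizes either the maximum or the minimum of $\mathcal{K}$, and only such configurations do. Then the value of $\mathcal{K}$ at the distinguished point $p$ forces the same configuration at every point of $M^4$.

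\textbf{Step 1: the two cases at $p$.} Since $\sum\lambda_i=0$ and there are exactly two distinct values, the multiplicities at $p$ are $(2,2)$ or $(1,3)$. Note that $S>0$, since otherwise all $\lambda_i$ vanish.
\begin{itemize}
\item In the $(2,2)$ case the principal curvatures are $(\lambda,\lambda,-\lambda,-\lambda)$, so $\mathcal{K}=S^2/16$.
\item In the $(1,3)$ case they are $(3a,-a,-a,-a)$, so $S=12a^2$ and $\mathcal{K}=-3a^4=-S^2/48$.
\end{itemize}

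\textbf{Step 2a: the maximum.} By AM--GM applied to the $\lambda_i^2$,
\[
\mathcal{K}^2=\prod\lambda_i^2\le (S/4)^4 ,
\]
so $\mathcal{K}\le S^2/16$. Equality holds iff all $|\lambda_i|$ are equal, and together with $\sum\lambda_i=0$ this forces the pattern $(\lambda,\lambda,-\lambda,-\lambda)$.

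\textbf{Step 2b: the minimum.} The constraint set is compact, so $\mathcal{K}$ attains a minimum on it. At a critical point with all $\lambda_i\neq0$, the Lagrange conditions give
\[
\mathcal{K}=\alpha\lambda_i+2\beta\lambda_i^2 \quad\text{for each } i .
\]
Hence the $\lambda_i$ take at most two distinct values. Critical points with some $\lambda_i=0$ have $\mathcal{K}=0$. So the candidates are:
\begin{itemize}
\item the $(2,2)$ pattern, with $\mathcal{K}=S^2/16$;
\item the $(1,3)$ pattern, with $\mathcal{K}=-S^2/48$;
\item configurations with $\mathcal{K}=0$.
\end{itemize}
Therefore the minimum is $-S^2/48$, attained only at the $(1,3)$ pattern. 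Both extremal characterizations are elementary but must be stated carefully, including the equality case. This is the main point to get right.

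\textbf{Step 3: propagation.} Because $\mathcal{K}$ and $S$ are constant, the extremal value attained at $p$ is attained everywhere. Consequently, at every point the principal curvatures are $(\lambda,\lambda,-\lambda,-\lambda)$ with $\lambda^2=S/4$, or $(3a,-a,-a,-a)$ with $a^2=S/12$. In either case the principal curvatures are constant on $M^4$. (In the $(1,3)$ case the sign of $a$ can be fixed by a choice of normal on a connected $M$.) So $M^4$ is isoparametric with $g=2$ and constant multiplicities.

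\textbf{Step 4: identification.} I will invoke Cartan's classification: an isoparametric hypersurface in $\mathbb{S}^{n+1}(1)$ with two distinct principal curvatures is an open part of a product of spheres, and its principal curvatures satisfy $\lambda\mu=-1$. The classical result of Chern--do Carmo--Kobayashi and Lawson gives the same conclusion for $S=n$.
\begin{itemize}
\item In the $(2,2)$ case, $-\lambda^2=-1$ gives $S=4$ and $M^4=\mathbb{S}^2(\frac{\sqrt2}{2})\times\mathbb{S}^2(\frac{\sqrt2}{2})$.
\item In the $(1,3)$ case, $-3a^2=-1$ gives $S=12a^2=4$ and $M^4=\mathbb{S}^1(\frac12)\times\mathbb{S}^3(\frac{\sqrt3}{2})$.
\end{itemize}
Closedness ensures the whole product of spheres is obtained, not just an open piece.
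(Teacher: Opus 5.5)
Your argument is correct, and it follows a genuinely different route from the paper's.

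\textbf{The paper's route.} The paper works only at the point $p$. For each multiplicity pattern it evaluates $\Delta f_4=4\big((4-S)f_4+2A+B\big)=0$ there. This reduces to a nonnegative combination of the $h_{ijk}^2(p)$ vanishing, so $\nabla h(p)=0$. Since $\sum h_{ijk}^2=S(S-4)$ is constant, this gives $S=4$, and the Chern--do Carmo--Kobayashi/Lawson rigidity theorem for $S=n$ finishes.

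\textbf{Your route.} You never touch covariant derivatives. You observe that on the $2$-sphere $\{\sum\lambda_i=0,\ \sum\lambda_i^2=S\}$ the $(2,2)$ and $(1,3)$ patterns are exactly the maximizers and minimizers of $\mathcal{K}$. Constancy of $\mathcal{K}$ and $S$ then propagates the pattern from $p$ to every point, so $M^4$ is isoparametric with $g=2$ outright. Cartan's relation $1+\lambda\mu=0$ then pins $S=4$. Your extremal analysis checks out: the AM--GM equality case forces $(\lambda,\lambda,-\lambda,-\lambda)$, and the Lagrange condition $\mathcal{K}=\alpha\lambda_i+2\beta\lambda_i^2$ with $\mathcal{K}\neq 0$ forces at most two distinct values. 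Fixing the sign of $a$ via continuity of the largest principal curvature is also fine.

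\textbf{Comparison.} Your approach is more elementary: it uses neither the $\Delta f_4$ formula nor the identity $\sum h_{ijk}^2=S(S-4)$. It also yields the stronger intermediate fact that the multiplicity pattern is global. The paper's approach, by contrast, fits the weighted-$h_{ijk}^2$ template used in the rest of the paper, and it reaches $S=4$ without invoking Cartan's structure theory for isoparametric hypersurfaces.

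\textbf{One detail to add.} In Step 2b, state the constraint qualification for Lagrange multipliers. The gradients $(1,1,1,1)$ and $2(\lambda_1,\lambda_2,\lambda_3,\lambda_4)$ are linearly independent because $S>0$ rules out all $\lambda_i$ being equal.
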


Unless otherwise specified, throughout this section we assume that $M^4$ is a closed minimal hypersurface in $\mathbb{S}^5(1)$ with constant scalar curvature and constant Gauss-Kronecker curvature. We also assume the principal curvatures are ordered as $\lambda_1 \le \lambda_2 \le \lambda_3 \le \lambda_4$. Moreover, all subsequent local computations are performed at the aforementioned point.

\vspace{2mm}

We begin by considering the first covariant derivative of the mean curvature. Since $H \equiv 0$, taking the first derivative shows
\begin{equation}\label{3.1_dS}
h_{11k}+h_{22k}+h_{33k}+h_{44k}=0, \quad k=1,2,3,4.
\end{equation}
Our argument will then be divided into the following two cases based on the multiplicities of the principal curvatures.

\addtocontents{toc}{\protect\setcounter{tocdepth}{-1}} 
\subsection*{Case I. $\lambda_1=\lambda_2=\lambda_3<\lambda_4$}
\begin{lemma}\label{lemma3.1}
Let $M^4 \hookrightarrow \mathbb{S}^5(1)$ be a closed minimal hypersurface with constant scalar curvature and constant Gauss-Kronecker curvature. If there exists a point $p \in M^4$ at which $M^4$ has a principal curvature of multiplicity three $\lambda_1 = \lambda_2 = \lambda_3$ and a simple principal curvature $\lambda_4$, then $S=4$ and $M^4$ is the Clifford torus $\mathbb{S}^1\left(\frac{1}{2}\right)\times \mathbb{S}^3\left(\frac{\sqrt{3}}{2}\right)$.
\end{lemma}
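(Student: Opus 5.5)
At $p$ write $\lambda_1=\lambda_2=\lambda_3=\lambda$ and $\lambda_4=\mu$. Minimality gives $\mu=-3\lambda$ with $\lambda<0$, so at $p$
\[
S=12\lambda^2,\qquad \mathcal{K}=-3\lambda^4,\qquad f_4=84\lambda^4 .
\]
The key observation is that $S$ and $\mathcal{K}$ are constant on all of $M^4$. Together with $H\equiv0$, this fixes the elementary symmetric functions $\sigma_1=0$, $\sigma_2=-S/2$ and $\sigma_4=\mathcal{K}$ of the principal curvatures everywhere. Only $\sigma_3=f_3/3$ is free to vary. The plan is to exploit this to show that $f_3$ is locally constant near $p$, and then feed this into Cheng--Yang type integral arguments (Theorem \ref{YC3}), or directly into Simons' inequality (1.1), to conclude $S=4$.

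\textbf{Step 1 (local analysis).} The quartic $t^4-\tfrac S2 t^2-\sigma_3 t+\mathcal{K}$ has at $p$ a triple root $\lambda$ and a simple root $-3\lambda$. Perturbing $\sigma_3$ changes the multiplicity structure. I will show that $f_3$ attains a local extremum at $p$ among admissible values, i.e.\ those for which all roots are real. Indeed, $t\mapsto t^4-\tfrac S2t^2+\mathcal K$ is tangent to the line $\sigma_3 t$ with third-order contact at $t=\lambda$. Hence for $\sigma_3$ slightly beyond the value at $p$ two roots become complex, and realness forces a one-sided constraint on $\sigma_3$. It follows that $\nabla f_3(p)=0$.

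Next I compute $\nabla f_3$ and $\nabla f_4$ at $p$ directly:
\begin{itemize}
\item $\nabla S=0$ gives $\lambda(h_{11k}+h_{22k}+h_{33k})+\mu h_{44k}=0$.
\item Combined with \eqref{3.1_dS} and $\mu\ne\lambda$, this gives $h_{44k}=0$ and $h_{11k}+h_{22k}+h_{33k}=0$ for every $k$.
\end{itemize}
Using Codazzi symmetry, the components $h_{4ij}$ with $i,j\le3$ and $h_{ij4}$ are then constrained further. The second-order equations $\Delta f_4=0$ from \eqref{eq:b3} and $\Delta S=0$ also hold at $p$. Evaluating \eqref{eq:b1} and \eqref{eq:b3} at $p$ yields
\[
(4-S)f_4+2A+B=0,\qquad \sum h_{ijk}^2=S(S-4).
\]
Here $A$ and $B$ are expressed through the blocks $\sum_{i,j,k\le3}h_{ijk}^2$ and the mixed terms.

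\textbf{Step 2 (obtaining $S=4$).} Write
\[
X=\sum_{i,j,k\le 3}h_{ijk}^2,\qquad Y=3\sum_{i,j\le3}h_{ij4}^2 ,
\]
so that $S(S-4)=X+Y$, since the $h_{44k}$ vanish. Then
\[
2A+B = 2\lambda^2X+2\lambda^2\textstyle\sum h_{ij4}^2+18\lambda^2\sum h_{ij4}^2+\lambda^2X-3\lambda^2\cdot 2\sum h_{ij4}^2 ,
\]
with the cross terms $\lambda_i\lambda_j$ equal to $\lambda\mu=-3\lambda^2$ when exactly one index is $4$. Substituting $\lambda^2=S/12$ and $f_4=84\lambda^4=7S^2/12$ turns this into a linear relation between $X$ and $Y$ with coefficients depending on $S$. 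Combined with $X+Y=S(S-4)$ and $X,Y\ge0$, I expect a sign contradiction unless $S=4$ or $S=0$. The case $S=0$ is excluded because $\lambda\neq\mu$.

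\textbf{Step 3 (classification).} Once $S\equiv4$, the Chern--do Carmo--Kobayashi/Lawson classification applies. Since there are multiplicities $(3,1)$ at $p$, the hypersurface is $\mathbb{S}^1(\tfrac12)\times\mathbb{S}^3(\tfrac{\sqrt3}{2})$.

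\textbf{Main obstacle.} The hard point is Step 2: the pointwise relation may not have a definite sign, because $f_4$ evaluated at $p$ enters with coefficient $(4-S)$. If it fails, I will instead use Step 1, namely that at $p$ the function $f_3$, and hence $\sigma_3$, is extremal. This gives a second-order inequality $\Delta f_3(p)\gtrless0$, which can be computed from the standard Simons-type formula for $\Delta f_3$ and added to the $\Delta f_4$ identity to obtain a definite sign.
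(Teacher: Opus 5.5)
Your Step 2 is the paper's proof, and it closes with no fallback. The factor $(4-S)$ you worry about causes no trouble: by \eqref{eq:b1}, $(S-4)f_4=84\lambda^4(S-4)=7\lambda^2\,S(S-4)=7\lambda^2(X+Y)$, a positive multiple of $X+Y$.

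Your formula for $2A+B$ has two slips. Write $Z=\sum_{i,j\le 3}h_{ij4}^2$, so $Y=3Z$.
\begin{itemize}
\item In $2A$, the triples with $i\le 3$ and a $4$ in position $j$ or $k$ contribute $4\lambda^2Z$, not $2\lambda^2Z$.
\item $B$ also receives $+\lambda^2 Z$ from the triples with $k=4$.
\end{itemize}
The correct value is $2A+B=3\lambda^2X+17\lambda^2Z$. This is the paper's $3\lambda^2b+17\lambda^2a$, where the paper rotates so that $(h_{ij4})_{i,j\le3}$ is diagonal. Then $\Delta f_4=0$ gives
\[
0=(4-S)f_4+2A+B=-7\lambda^2(X+3Z)+3\lambda^2X+17\lambda^2Z=-4\lambda^2(X+Z).
\]
Hence $X=Z=0$ at $p$, so $S(S-4)=0$ and $S=4$, since $S$ is constant and positive. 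Even your erroneous coefficients give $-\lambda^2(4X+7Z)=0$, so the sign argument is robust. The $\Delta f_3$ fallback is unnecessary.

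Step 1 should be dropped, and as written its claim is wrong. A triple root does not impose a one-sided constraint on $\sigma_3$. Perturb $\sigma_3$ by $\epsilon$ and set $s=t-\lambda$. The roots of $(t-\lambda)^3(t+3\lambda)-\epsilon t$ near $\lambda$ then satisfy $s^3\approx\epsilon/4$. For either sign of $\epsilon$ this gives one real root and two non-real roots.

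Stated correctly, the observation gives a genuinely different complete proof.
\begin{itemize}
\item Since $\sigma_1,\sigma_2,\sigma_4$ are constant, $\sigma_3(p)$ is an isolated admissible value.
\item So $\{\sigma_3=\sigma_3(p)\}$ is open and closed in the connected $M^4$, and $\sigma_3$ is constant.
\item Then $M^4$ is isoparametric with multiplicities $(3,1)$.
\item Cartan's identity $1+\lambda\mu=0$ with $\mu=-3\lambda$ gives $\lambda^2=1/3$, hence $S=4$.
\end{itemize}
This route bypasses \eqref{eq:b3}. The paper's pointwise computation is shorter and needs no connectedness or isoparametric theory.

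The route you actually proposed would not finish. Constancy of $f_3$ fed into Theorem \ref{YC3} only yields $S>20/3$ when $S>4$; it does not force $S=4$. Simons' inequality does not force it either.
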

\begin{proof}
\textbf{Step 1:} WLOG, we may assume that at the point $p$, the principal curvatures satisfy $\lambda_1=\lambda_2=\lambda_3=:\lambda<0$ and $\lambda_4 = -3\lambda > 0$ (otherwise, we may simply reverse the orientation). Consequently, with respect to the principal directions $\{e_1(p), e_2(p), e_3(p), e_4(p)\}$, the matrix of the second fundamental form is given by 
$$\Big(h_{ij}(p)\Big)=\left(\begin{matrix}\lambda &\ &\ &\ \\
\  &\lambda&\ &\ \\ \  &\ &\lambda &\ \\  \  &\ &\ &-3\lambda \\
\end{matrix}\right).$$
A straightforward calculation provides
\begin{equation}\label{S:f4}
    S\equiv 12\lambda^2 >0,\ \ f_4 \equiv \dfrac{7}{12}S^2.
\end{equation}

\vspace{2mm}

Since $S$ is constant, differentiating $S$ gives $h_{ij}h_{ijk}=0$. Specifically, evaluating this at $p$ provides \[\lambda h_{11k}+\lambda h_{22k}+\lambda h_{33k}-3\lambda h_{44k}=0.\]
Combined with \eqref{3.1_dS}, this implies that at $p$, $h_{44k}=0$ for $k=1,2,3,4.$ 
\vspace{0.15cm}

 \textbf{Step 2:} To apply \eqref{eq:b3}, we first evaluate the quantity $(S-4) f_4$.
 
 By applying an appropriate orthogonal transformation to the eigenspace associated with $\lambda$, WLOG, we may assume that at $p$, $h_{124}=h_{134}=h_{234}=0.$
Denoting
\[a=\sum_{i=1}^3h_{ii4}^2(p),\ b=\sum_{i,j,k=1}^3 h_{ijk}^2(p),\]
it follows from \eqref{eq:b1} that 
\[S(S-4)=\sum_{i,j,k}h_{ijk}^2=3a+b.\]
Utilizing \eqref{S:f4}, we obtain 
\begin{equation}\label{(S-4(f_4)}
(S-4)f_4 = \frac{7}{12}S^2 (S-4)=\frac{7S}{12}(3a+b).
\end{equation}

\vspace{5mm}

Next, by the total symmetry of $h_{ijk}$, the computation of $2A+B$ at $p$ can be simplified as follows: 
\begin{align}\label{2A+B}
2A + B &= \frac{1}{3}\left(\sum_{i,j,k=1}^4 (2\lambda_i^2 + 2\lambda_j^2 + 2\lambda_k^2+\lambda_i\lambda_j + \lambda_j\lambda_k + \lambda_k\lambda_i) h_{ijk}^2\right), \nonumber\\[2mm]
 &=\frac{1}{3}\left(3\sum_{j,k=1}^3 17\lambda^2 h_{4jk}^2 + \sum_{i,j,k=1}^3 9\lambda^2 h_{ijk}^2\right) \nonumber\\[2mm]
 &= 17\lambda^2 a + 3\lambda^2 b = S\left(\frac{17}{12}a+\frac{1}{4}b\right).
\end{align}
    
    
    

\vspace{1.5mm}

Finally, invoking \eqref{eq:b3} and $\Delta f_4 = 0$, equations \eqref{(S-4(f_4)} and \eqref{2A+B} yield
\[S\left(\frac{17}{12}a+\frac{1}{4}b\right) = \frac{7S}{12}(3a+b),\]
which implies $a=-b$ at $p$.

\vspace{1mm}

Since $a \geq 0$ and $b \geq 0$, the condition $a = -b$ necessitates $a=b=0$; that is, at $p$,
\[h_{ijk}=0,\ \ \forall i,j,k=1,2,3,4.\]
Applying \eqref{eq:b1}, we conclude that $S=4$, thus $M^4$ is  the Clifford torus $\mathbb{S}^1\left(\frac{1}{2}\right)\times \mathbb{S}^3\left(\frac{\sqrt{3}}{2}\right)$. 

\end{proof}

\subsection*{Case II. $\lambda_1 = \lambda_2 < \lambda_3 =\lambda_4$}

\vspace{2mm}
\begin{lemma}\label{lemma3.2}
Let $M^4 \hookrightarrow \mathbb{S}^5(1)$ be a closed minimal hypersurface with constant scalar curvature and constant Gauss-Kronecker curvature. If there exists a point $p \in M^4$ where the principal curvatures have multiplicities $2$ and $2$ with $\lambda_1 = \lambda_2 < \lambda_3 = \lambda_4$, then $S=4$ and $M^4$ is the Clifford torus $\mathbb{S}^2\left(\frac{\sqrt{2}}{2}\right) \times \mathbb{S}^2\left(\frac{\sqrt{2}}{2}\right)$.
\end{lemma}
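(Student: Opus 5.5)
The plan is to mirror the proof of Lemma \ref{lemma3.1}: evaluate \eqref{eq:b3} at the point $p$, using that $\Delta f_4=0$ because $f_4$ is constant by \eqref{f_4:K}. The weights appearing in $2A+B$ are then large enough to force $h_{ijk}(p)=0$.

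\textbf{Step 1.} At $p$, minimality together with the multiplicities $2,2$ forces $\lambda_1=\lambda_2=\lambda<0$ and $\lambda_3=\lambda_4=-\lambda>0$. Hence
\[S\equiv 4\lambda^2>0,\qquad f_4\equiv 4\lambda^4=\tfrac14 S^2 .\]
Substituting this into \eqref{eq:b1} gives
\[(S-4)f_4=\tfrac{S}{4}\,S(S-4)=\lambda^2\sum_{i,j,k}h_{ijk}^2 .\]
This step does not need the relations coming from $dS=0$ and \eqref{3.1_dS}, although they would give $h_{11k}+h_{22k}=h_{33k}+h_{44k}=0$.

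\textbf{Step 2.} Compute $2A+B$ at $p$ through the symmetrized expression already used in \eqref{2A+B}:
\[2A+B=\tfrac13\sum_{i,j,k}\big(2\lambda_i^2+2\lambda_j^2+2\lambda_k^2+\lambda_i\lambda_j+\lambda_j\lambda_k+\lambda_k\lambda_i\big)h_{ijk}^2 .\]
Split the ordered index triples into two classes.
\begin{enumerate}
\item Let $P$ be the sum of $h_{ijk}^2$ over triples with all indices in $\{1,2\}$ or all in $\{3,4\}$. For these the weight is $6\lambda^2+3\lambda^2=9\lambda^2$.
\item Let $Q$ be the sum over the remaining, mixed triples. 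For these the weight is $6\lambda^2+(\lambda^2-\lambda^2-\lambda^2)=5\lambda^2$.
\end{enumerate}
This gives
\[2A+B=\lambda^2\big(3P+\tfrac53 Q\big),\qquad (S-4)f_4=\lambda^2(P+Q).\]

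\textbf{Step 3.} Since $\Delta f_4=0$, equation \eqref{eq:b3} gives $2A+B=(S-4)f_4$. Substituting the two expressions from Step 2 and dividing by $\lambda^2\neq0$ yields
\[2P+\tfrac23 Q=0.\]
Both $P$ and $Q$ are sums of squares, so $P=Q=0$, and therefore $h_{ijk}(p)=0$ for all $i,j,k$. Because $S$ is constant, \eqref{eq:b1} then gives $S(S-4)=0$ everywhere, and $S>0$ forces $S=4$.

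\textbf{Step 4.} By the classification of Chern--do Carmo--Kobayashi and Lawson, $M^4$ is a Clifford torus. The principal curvatures at $p$ have multiplicities $2,2$, so $M^4=\mathbb{S}^2\left(\frac{\sqrt2}{2}\right)\times\mathbb{S}^2\left(\frac{\sqrt2}{2}\right)$.

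The only place that needs care is Step 2. One must check that every class of index triples carries a weight in $2A+B$ strictly larger than the uniform weight $\lambda^2$ on the $(S-4)f_4$ side. The mixed class is the least favourable, at $5\lambda^2/3$ per ordered triple versus $\lambda^2$, but it still exceeds it, so no cancellation can occur.
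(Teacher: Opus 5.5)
Your proof is correct and is essentially the paper's argument. You evaluate \eqref{eq:b3} at $p$ with $\Delta f_4=0$, split the ordered triples into pure and mixed classes (your $P$ and $Q$ are the paper's $b$ and $3a$), and compare the weights $9\lambda^2$ and $5\lambda^2$ in $2A+B$ with $(S-4)f_4=\lambda^2\sum h_{ijk}^2$; this forces $h_{ijk}(p)=0$ and hence $S=4$.
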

\begin{proof}
\textbf{Step 1:} Suppose that at a point $p$, with respect to the frame $\{e_1(p), e_2(p), e_3(p), e_4(p)\}$, the matrix of the second fundamental form is given by
$$\Big(h_{ij}(p)\Big)=\left(\begin{matrix}\lambda &\ &\ &\ \\
\  &\lambda&\ &\ \\ \  &\ &-\lambda &\ \\  \  &\ &\ &-\lambda \\
\end{matrix}\right).$$
A direct calculation gives $S=4\lambda^2 >0$ and $f_4=\dfrac{1}{4}S^2.$

\vspace{2mm}

\textbf{Step 2:} Similar to Case I, we now evaluate the quantities $(S-4)f_4$ and $2A+B$ in \eqref{eq:b3}.

First, we define the index sets $I_1=\{1,2\}$ and $I_2=\{3,4\}$. Let
\[a :=\sum_{i,j\in I_1,\ k\in I_2}h_{ijk}^2+\sum_{i,j\in I_2,\ k\in I_1}h_{ijk}^2,\ \ \ b := \sum_{i,j,k\in I_1}h_{ijk}^2+\sum_{i,j,k\in I_2}h_{ijk}^2.\]
Then, it follows from \eqref{eq:b1} that 
\[S(S-4)=\sum_{i,j,k}h_{ijk}^2=3a+b,\]
which implies
\begin{equation}\label{(S-4)f_4:2}
    (S-4)f_4 = \frac{1}{4} S^2(S-4) = \frac{S}{4} (3a+b).
\end{equation}

Next, to compute $2A+B$ at $p$, we partition the sum over all indices $i, j, k$ into four disjoint groups based on the subsets $I_1$ and $I_2$. For each configuration, similar to Case I, we evaluate the value $2A+B$:
\begin{align}\label{2A+B:2}
2A + B &= \frac{1}{3}\left(\sum_{i,j,k=1}^4 (2\lambda_i^2 + 2\lambda_j^2 + 2\lambda_k^2+\lambda_i\lambda_j + \lambda_j\lambda_k + \lambda_k\lambda_i) h_{ijk}^2\right) \nonumber\\[2mm]
 &= \frac{1}{3}\left[ 3 \left(\sum_{ i,j \in I_1, k \in I_2}  5\lambda^2 h_{ijk}^2 + \sum_{i,j \in I_2, k \in I_1 } 5\lambda^2 h_{ijk}^2 \right)
 + \sum_{i,j,k \in I_1 }9\lambda^2 h_{ijk}^2 + \sum_{i,j,k \in I_2 } 9\lambda^2 h_{ijk}^2 \right]\nonumber \\[2mm]
 &= 5\lambda^2 a + 3\lambda^2 b = \frac{S}{4}(5a+3b).
\end{align}

Finally, substituting \eqref{(S-4)f_4:2} and \eqref{2A+B:2} into \eqref{eq:b3}, and using $\Delta f_4 = 0$, we find
\[\frac{S}{4}\left(5a+3b\right) = \frac{S}{4}(3a+b),\]
which forces $a=-b$ at $p$. 

Since $a \geq 0$ and $b \geq 0$, the equality $a = -b$ shows $a=b=0$. As in case I, this implies $h_{ijk}=0$ for all $i,j,k$ at $p$. Applying \eqref{eq:b1} once more, we conclude that $S = 4$, thus $M^4$ is the Clifford torus $\mathbb{S}^2\left(\frac{\sqrt{2}}{2}\right) \times \mathbb{S}^2\left(\frac{\sqrt{2}}{2}\right)$.
\end{proof}

Theorem \ref{theorem3} follows directly from Lemmas \ref{lemma3.1} and \ref{lemma3.2}.


\vspace{5mm}

\addtocontents{toc}{\protect\setcounter{tocdepth}{2}}
\section{Four distinct principal curvatures everywhere}
In this section, we examine the case where the hypersurface $M^4$ possesses four distinct principal curvatures at every point. With the principal curvatures ordered as $\lambda_1 < \lambda_2 < \lambda_3 < \lambda_4$, we obtain the following result:
\begin{theorem}\label{th4}
    Let $M^4\hookrightarrow \mathbb{S}^5(1)$ be a closed minimal hypersurface with constant scalar curvature and constant Gauss-Kronecker curvature. If there are 4 distinct principal curvatures everywhere on $M^4$, then $S=12$ and $M^4$ is Cartan's minimal hypersurface.
\end{theorem}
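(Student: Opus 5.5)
Since $M^4$ has four distinct principal curvatures everywhere, the $\lambda_i$ are smooth functions on $M$ and we can pick a smooth local principal frame $\{e_i\}$ with $h_{ij}=\lambda_i\delta_{ij}$. The Levi-Civita connection forms are then determined by $\nabla h$ through
\[(\lambda_i-\lambda_j)\,\omega_{ij}=\sum_k h_{ijk}\theta_k, \qquad i\neq j.\]
The plan is to show that $f_3$ is constant and then invoke the theorem of He--Xu--Zhao \cite{HXZ}, which yields isoparametricity. Among the isoparametric models, only Cartan's hypersurface with $g=4$ has four distinct principal curvatures, and for it $S=3n=12$.

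\textbf{Step 1: algebraic reduction.} We have $\sigma_1=0$, $\sigma_2=-S/2$ and $\sigma_4=\mathcal{K}$, all constant, by \eqref{f_4:K}. Hence the unordered $4$-tuple $(\lambda_i)$ depends on the single parameter $\sigma_3=f_3/3$. Differentiating the three constraints gives $d\lambda_i=\mu_i\,df_3$ for explicit rational functions $\mu_i$ of the $\lambda$'s. These are computed by Cramer's rule on the Vandermonde-type system, and one finds
\[\mu_i=\frac{1}{3\prod_{j\ne i}(\lambda_i-\lambda_j)}.\]
Consequently $h_{iik}=\mu_i\,e_k(f_3)$. All the "diagonal" components of $\nabla h$ are thus controlled by the single vector $\nabla f_3$. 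The remaining components are the $h_{ijk}$ with $i,j,k$ distinct, together with $h_{iij}$ for $i\neq j$, and the latter are again multiples of $e_j(f_3)$. In particular $\nabla f_3\equiv0$ forces $h_{ijk}=0$ unless $i,j,k$ are distinct.

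\textbf{Step 2: a globally defined weighted 3-form.} Following the de Almeida--Brito strategy, I would consider 3-forms of the type
\[\Phi=\sum_{\text{cyclic}} w_{ijkl}(\lambda)\,\omega_{ij}\wedge\theta_k\wedge\theta_l,\]
with weights chosen so that $\Phi$ is independent of the sign and orientation choices of the $e_i$. This makes $\Phi$ a global form on $M$, because the principal line fields are globally defined. Then $\int_M d\Phi=0$.

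\textbf{Step 3: computing $d\Phi$.} I would compute $d\Phi$ using the structure equations
\[d\omega_{ij}=-\sum_m\omega_{im}\wedge\omega_{mj}-\tfrac12\sum_{k,l}R_{ijkl}\theta_k\wedge\theta_l,\]
together with $R_{ijij}=1+\lambda_i\lambda_j$ and the Codazzi relations. The result has the form $d\Phi=\big(P(\lambda)+Q(\lambda;h_{ijk})\big)dV$. Here $P$ comes from the curvature terms. The term $Q$ is quadratic in $\nabla h$; by Step 1 it splits into a $|\nabla f_3|^2$ part and a part in $h_{123},h_{124},h_{134},h_{234}$. Using $\sum h_{ijk}^2=S(S-4)$ from \eqref{eq:b1} and $\Delta f_4=0$ in \eqref{eq:b3}, I would rewrite everything in terms of symmetric polynomials of the $\lambda$'s. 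The first weight is the one shared with \cite{GLLY}. A second, shifted weight will be chosen so that a suitable linear combination of the two identities has an integrand of definite sign, by homogeneous polynomial estimates on the constrained set $\{\sigma_1=0,\ \sigma_2=-S/2\}$.

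\textbf{Step 4: the main obstacle, and conclusion.} The hard part is Step 3. The weights must be chosen so that the integrand is pointwise nonnegative with equality only when $\nabla f_3=0$, without assuming $R_M\ge0$. I would first try weights of the form $w=(\lambda_i\lambda_j+c)/\prod(\lambda_a-\lambda_b)$ with a free shift $c$. The shift $c$ is to be tuned so that the curvature terms $1+\lambda_i\lambda_j$ combine with $S-4$ into a square. Once the integrand is shown to be nonnegative, equality everywhere gives $\nabla f_3\equiv0$, hence $f_3$ is constant. The theorem of He--Xu--Zhao \cite{HXZ} then shows $M^4$ is isoparametric, and four distinct principal curvatures forces Cartan's hypersurface with $S=12$. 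As a consistency check, for the Cartan hypersurface $h_{ijk}$ is nonzero only for distinct indices, and the Gauss relation $R_M=12-S$ matches the rigidity of $R_M$ versus $S$ for $g=4$.
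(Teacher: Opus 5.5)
Your Step 1 is essentially fine, with one slip: $\mu_i=\lambda_i/\bigl(3\prod_{j\ne i}(\lambda_i-\lambda_j)\bigr)$, so you dropped the factor $\lambda_i$ (cf.\ \eqref{solu}). The appeal to \cite{HXZ} is redundant rather than wrong, since $\nabla f_3\equiv0$ already means $h_{iik}\equiv0$, i.e.\ constant principal curvatures.

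The genuine gap is Steps 2--4. No weights are fixed, $d\Phi$ is never computed, and no integrand is shown to have a sign, which is the entire content of the theorem. Your plan also omits the two global inputs that make a sign possible at all:
\begin{enumerate}
\item[(i)] Four distinct principal curvatures give a global line field, so $\chi(M^4)=0$. Since $S$ and $f_4$ are constant, the Gauss--Bonnet--Chern integrand must vanish identically, giving $S=6(\mathcal K+1)$ (Lemma \ref{lemmaGB}). Without this relation, the curvature part of $d\Phi$ depends on the two independent constants $S$ and $\mathcal K$ and has no determined sign. For the paper's weights that part is $\frac52S+\frac12S^2-\frac32f_4$ plus a multiple of $12-S$. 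Neither $\sum h_{ijk}^2=S(S-4)$ nor $\Delta f_4=0$ can substitute for it.
\item[(ii)] Combining this relation with Cheng--Yang's bound $S>20/3$ (Theorem \ref{YC3}) gives $\mathcal K>0$. Hence $\lambda_1<\lambda_2<0<\lambda_3<\lambda_4$ and $S>6$. The negativity of the coefficients of $h_{44k}^2$ is proved only on this sign pattern, via substitutions that make every coefficient of a polynomial in $a,b,c\ge0$ of one sign.
\end{enumerate}

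Your weight family also does not do what is needed. In the paper, the fully crossing terms $h_{ijk}^2$ ($i,j,k$ distinct) cancel identically, \eqref{crossing}. This happens because $W_{ij}=\lambda_i^2+\lambda_i\lambda_j+\lambda_j^2$ and $\mathcal W_{ij}=W_{ij}-\frac S2$ (which equals $\lambda_k\lambda_l$ for the complementary pair $\{k,l\}$) are divided differences of $x^3-\frac S2x$. If you attach $\lambda_i\lambda_j+c$ to $\omega_{ij}$, the left side of \eqref{crossing} equals $1$, not $0$; a common normalizing factor only rescales it. You give no other mechanism for handling $h_{123}^2,\dots,h_{234}^2$.

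Moreover, no single weight covers all values of $S$. The paper first uses $\varphi$ (weights $\mathcal W_{ij}$, curvature term $\mathcal R=\frac12S-6$) to exclude $S>12$. It then uses $\Psi$ (weights $W_{ij}$, with $\tilde R=-\frac14(S-12)(S-2)\ge0$ for $6<S\le12$) to force $h_{44k}\equiv0$ and $S=12$. Tuning a shift so that ``$1+\lambda_i\lambda_j$ combines with $S-4$ into a square'' is not backed by any computation, and it is not how the sign arises.
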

The proof in this section will follow the outline: (i) the Gauss-Kronecker curvature $\mathcal{K}$ must be positive; (ii) the scalar curvature is non-negative, which is equivalent to $S\le 12$; (iii) $S=12$ and $M^4$ is isoparametric. 


\vspace{3mm}
We begin by establishing a useful preliminary result.
\begin{lemma}\label{lemmaGB}
Let $M^4\hookrightarrow \mathbb{S}^5(1)$ be a closed minimal hypersurface with constant scalar curvature and constant Gauss-Kronecker curvature. If $M^4$ admits a globally simple principal curvature, then 
\begin{equation}\label{S:K}
S=6(\mathcal{K}+1).
\end{equation}
\end{lemma}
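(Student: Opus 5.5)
The plan is to combine the Gauss--Bonnet--Chern formula with the Poincar\'e--Hopf theorem. A globally simple principal curvature gives a line field on $M^4$, which forces $\chi(M^4)=0$. Under our hypotheses the Gauss--Bonnet integrand is a constant that depends only on $S$ and $\mathcal{K}$, so $\chi=0$ becomes an algebraic relation between $S$ and $\mathcal{K}$.

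\textbf{Step 1 (a nowhere-vanishing vector field).} Suppose, say, $\lambda_4$ is simple at every point of $M^4$.
\begin{itemize}
\item Since $(h_{ij})$ is a smooth symmetric tensor and $\lambda_4$ is a simple eigenvalue everywhere, $\lambda_4$ is smooth.
\item Its eigenline $L=\ker(h-\lambda_4\,\mathrm{id})$ is then a smooth line subbundle of $TM$.
\item The bundle $L$ need not be orientable, and $M$ itself might not be. We therefore pass to a finite cover $\widetilde M$ of degree at most $4$ on which both $M$ and $L$ are orientable, by taking the orientation double covers.
\item On $\widetilde M$ the line field lifts to a unit vector field $e_4$ that vanishes nowhere. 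By Poincar\'e--Hopf, $\chi(\widetilde M)=0$.
\end{itemize}
Since $\widetilde M$ is again a closed minimal immersion into $\mathbb{S}^5(1)$ with the same constant $S$ and $\mathcal{K}$, we may work on $\widetilde M$ directly.

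\textbf{Step 2 (computing the Gauss--Bonnet integrand).} In dimension four,
\[
\chi(\widetilde M)=\frac{1}{32\pi^2}\int_{\widetilde M}\big(|Rm|^2-4|Ric|^2+R^2\big).
\]
Equivalently, the integrand is a constant multiple of the Pfaffian $\sum \epsilon\epsilon\, R_{i_1i_2j_1j_2}R_{i_3i_4j_3j_4}$.
\begin{itemize}
\item We evaluate at a point using a principal frame. By \eqref{Rijkl} the only nonzero components are $R_{ijij}=1+\lambda_i\lambda_j$ for $i\neq j$.
\item The Pfaffian then reduces to a positive multiple ($8$ in the normalization $\chi=\frac1{4\pi^2}\int \mathrm{Pf}$ appropriately scaled) of
\[
\sum_{\{i,j\}\sqcup\{k,l\}=\{1,2,3,4\}}(1+\lambda_i\lambda_j)(1+\lambda_k\lambda_l),
\]
a sum over the three pairings.
\item Expanding gives $3+\sigma_2+3\mathcal{K}$, because each product $\lambda_i\lambda_j$ appears in exactly one pairing.
\item Minimality gives $\sigma_2=\tfrac12\big((\sum\lambda_i)^2-S\big)=-\tfrac S2$.
\end{itemize}
So the integrand equals $c\,(3-\tfrac S2+3\mathcal{K})$ with a universal constant $c>0$. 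This is constant because $S$ and $\mathcal{K}$ are constant.

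\textbf{Step 3 (conclusion).} Combining the two steps, $0=\chi(\widetilde M)=c\,(3-\tfrac S2+3\mathcal{K})\,\mathrm{Vol}(\widetilde M)$. Since the volume is positive, this forces $S=6(\mathcal{K}+1)$, which is \eqref{S:K}.

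The only delicate point is Step 2, specifically normalizing the Gauss--Bonnet--Chern integrand correctly. The constant itself is irrelevant to the conclusion, but its positivity and the identification of the integrand as the symmetric expression above must be checked. As a sanity check on the formula, I would test it on $\mathbb{S}^4$ itself, where $S=\mathcal{K}=0$ and the integrand is $3c$, giving $\chi=2$. The smoothness and lifting of the eigenline in Step 1 is standard.
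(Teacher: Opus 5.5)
Your proof is correct and follows the paper's route: Poincar\'e--Hopf applied to the eigenline of the simple principal curvature gives $\chi=0$, and since the Gauss--Bonnet--Chern integrand is constant it must vanish. The differences are minor: you compute the integrand directly as a positive multiple of $3-\frac{S}{2}+3\mathcal{K}$ rather than citing Cui's $\frac32S^2-3f_4-2S+12$ (the two agree via $f_4=\frac12S^2-4\mathcal{K}$), and you handle possible non-orientability of the eigenline by passing to a finite cover, a point the paper does not address.
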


\begin{proof}
According to Lemma 6 in \cite{Cui}, the Gauss-Bonnet theorem leads to
\[\int_{M^4}\left(\frac{3}{2}S^2-3f_4-2S+12\right)dV = 16\pi^2\chi(M^4).\]
(For a detailed derivation of this equation, please see \cite{Cui}.)

\vspace{1.5mm}

Since the eigenvector field corresponding to the simple eigenvalue has no isolated zeros, the Poincar\'e-Hopf lemma implies the Euler characteristic $\chi(M^4) = 0$. Because both $S$ and $f_4$ are constant, the integrand must vanish identically, i.e.
\begin{equation}\label{f_4v.s.S}
f_4=\frac{1}{2}S^2-\frac{2}{3}S+4.
\end{equation}
Combining this with equation \eqref{f_4:K}, we obtain \eqref{S:K}.
\end{proof}

Furthermore, note that equation \eqref{eq:b1} implies $S \ge 4$. However, if $S=4$, then $M^4$ must be a Clifford torus, which does not possess four distinct principal curvatures everywhere. Consequently, throughout this section, we must strictly have $S > 4$.

\subsection{The Gauss-Kronecker curvature is positive}

\begin{lemma}\label{lemma4.3:K>0}
    Let $M^4 \hookrightarrow \mathbb{S}^5(1)$ be a closed minimal hypersurface with constant scalar curvature and constant Gauss-Kronecker curvature. If there are 4 distinct principal curvatures everywhere on $M^4$, then the Gauss-Kronecker curvature must be positive.
\end{lemma}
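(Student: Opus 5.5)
The plan is to combine the relation $S=6(\mathcal{K}+1)$ from Lemma \ref{lemmaGB} with the Cheng--Yang pinching estimate, Theorem \ref{YC3}. No new local computation is needed.

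\textbf{Step 1: $S=6(\mathcal{K}+1)$.} Since $\lambda_1<\lambda_2<\lambda_3<\lambda_4$ everywhere on $M^4$, each $\lambda_i$ is a continuous function of multiplicity one. In particular $M^4$ admits a globally simple principal curvature. Passing to a double cover if needed to orient the corresponding eigenline field, $\chi=0$ still follows. Lemma \ref{lemmaGB} then applies and gives
\[
S=6(\mathcal{K}+1), \qquad \text{equivalently} \qquad \mathcal{K}=\frac{S}{6}-1 .
\]
So proving $\mathcal{K}>0$ is the same as proving $S>6$.

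\textbf{Step 2: $S>6$.} By \eqref{f_4:K}, constancy of $\mathcal{K}$ together with constancy of $S$ means $f_4=\sum_i\lambda_i^4$ is constant. We are in dimension $n=4>3$, $S$ is constant, and $S>4$, as noted just before this subsection. Hence the second half of Theorem \ref{YC3} (the version assuming $\sum_i\lambda_i^4$ constant) applies and yields
\[
S>4+\tfrac{2}{3}\cdot 4=\tfrac{20}{3}>6 .
\]

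\textbf{Step 3: conclusion.} Substituting into Step 1 gives $\mathcal{K}=\frac{S}{6}-1>\frac{10}{9}-1=\frac19>0$, as required.

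\textbf{Main obstacle and fallback.} The only real point is checking that the hypotheses of Lemma \ref{lemmaGB} and Theorem \ref{YC3} are met, and both are satisfied as above. If one preferred not to rely on the $f_4$-version of Cheng--Yang, the first fallback would be the unconditional Suh--Yang bound $S>4+\frac{12}{7}$. That bound alone falls just short of $6$. Combined with Cui's theorem (Theorem \ref{Cui}), which rules out $\mathcal{K}=0$ since $S>0$, one would still need to exclude the window $\frac{40}{7}<S<6$, i.e.\ $\mathcal{K}\in\left(-\frac{1}{21},0\right)$. For that I would try a sign argument: at each point an odd number of principal curvatures would be negative, combined with the identity $\Delta f_4=0$ in \eqref{eq:b3}.
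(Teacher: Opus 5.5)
Your proof is correct and is essentially the paper's argument: the paper combines $S=6(\mathcal{K}+1)$ from Lemma \ref{lemmaGB} with the Cheng--Yang bound $S>\frac{20}{3}$ (using $S>4$), phrased as a contradiction ($\mathcal{K}\le 0\Rightarrow S\le 6$), whereas you argue directly to get $\mathcal{K}>\frac19$. Your double-cover remark (so that the eigenline field yields $\chi=0$) and your explicit use of the $f_4$-version of Theorem \ref{YC3} are small points of care that the paper leaves implicit, and the fallback paragraph is not needed.
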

\begin{proof}

We argue by contradiction. Suppose that the constant Gauss-Kronecker curvature is non-positive (i.e. $\mathcal{K} \le 0$). Then, \eqref{S:K} in Lemma \ref{lemmaGB} implies that
\[S=6(\mathcal{K}+1) \le 6.\]
However, since we have already established $S > 4$, thanks to Theorem \ref{YC3} (Yang-Cheng \cite{YC3}), it ensures that 
\[S > n + \frac{2}{3}n = \frac{20}{3}.\]
This stands in contradiction to $S \le 6$, completing the proof.
\end{proof}

\begin{remark}
    With the principal curvatures ordered as $\lambda_1 < \lambda_2 < \lambda_3 < \lambda_4$, Lemma \ref{lemma4.3:K>0} implies that they must be distributed as follows:
    \[\lambda_1 < \lambda_2 < 0 < \lambda_3 < \lambda_4.\]
    Because the Gauss-Kronecker curvature $\mathcal{K}$ is a positive constant, this distribution holds everywhere on $M^4$. 
\end{remark}



\subsection{The scalar curvature is non-negative ($S\le 12$)}
\begin{lemma}\label{lemmaS>12}
    Let $M^4 \hookrightarrow \mathbb{S}^5(1)$ be a closed minimal hypersurface with constant scalar curvature and constant Gauss-Kronecker curvature. If there are 4 distinct principal curvatures everywhere on $M^4$, then the scalar curvature must be non-negative, which is equivalent to $S\le 12$.
\end{lemma}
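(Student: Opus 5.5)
We first fix the algebra. By the Gauss equation the scalar curvature is $R_M=12-S$, so it suffices to show $S\le 12$. By Lemma \ref{lemmaGB} and Lemma \ref{lemma4.3:K>0}, $S=6(\mathcal K+1)$ with $\mathcal K>0$, hence $f_4=\tfrac12S^2-\tfrac23S+4$. On all of $M^4$ we have $\lambda_1<\lambda_2<0<\lambda_3<\lambda_4$.

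\textbf{Setup.} Since the four principal curvatures are distinct everywhere, they are smooth functions. The principal line fields are smooth, and after passing to a finite cover (which does not affect the conclusion) we get a global smooth orthonormal principal frame $\{e_i\}$ with dual forms $\omega_i$ and connection forms $\omega_{ij}$.

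\textbf{Covariant derivatives.} Differentiating $H=0$, $S=\mathrm{const}$ and $f_4=\mathrm{const}$ gives, for each $k$,
\[
\sum_i h_{iik}=0,\qquad \sum_i\lambda_i h_{iik}=0,\qquad \sum_i\lambda_i^3h_{iik}=0 .
\]
Because the $\lambda_i$ are distinct, the vector $(h_{11k},\dots,h_{44k})$ is a multiple $\rho_k\,v$ of the explicit vector $v$ orthogonal to $(1,\lambda_i)$ and $(\lambda_i^3)$. Its components are Vandermonde-type expressions, e.g. $v_i\propto \prod_{j\ne i}(\lambda_i-\lambda_j)^{-1}\cdot(\text{symmetric factor})$. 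So the only free first-order data are $\rho_k$ and the off-diagonal $h_{ijk}$ with $i,j,k$ distinct. By Codazzi,
\[
\omega_{ij}(e_k)=\frac{h_{ijk}}{\lambda_j-\lambda_i}\quad(i\ne j).
\]

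\textbf{Integral identity via a 3-form.} Following the Almeida--Brito method, I would consider globally defined 3-forms
\[
\Phi=\sum_{i<j}c_{ij}(\lambda)\,\omega_{ij}\wedge\omega_k\wedge\omega_l,\qquad \{i,j,k,l\}=\{1,2,3,4\},
\]
with weights $c_{ij}$ that are functions of the principal curvatures, to be chosen. Computing $d\Phi$ uses the structure equations and $d\omega_{ij}=\sum\omega_{ik}\wedge\omega_{kj}-\tfrac12\sum R_{ijkl}\,\omega_k\wedge\omega_l$. The curvature term contributes $c_{ij}(1+\lambda_i\lambda_j)$. The remaining terms are quadratic in the $h_{ijk}$, after rewriting the $\omega_{ij}$ and the derivatives $e_k(\lambda_i)=h_{iik}$ via Codazzi. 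Stokes gives
\[
\int_M\Big(\sum_{i<j}c_{ij}(1+\lambda_i\lambda_j)+Q_c(h_{ijk})\Big)dV=0 .
\]
The idea is to choose the weights so that the following hold:
\begin{enumerate}
\item $Q_c$ has a definite sign (say $Q_c\ge 0$) after using the one-dimensional structure of $h_{iik}$ and $\sum h_{ijk}^2=S(S-4)$.
\item The zeroth-order term $\sum c_{ij}(1+\lambda_i\lambda_j)$ can be written, using $\sum\lambda_i=0$, $\sum\lambda_i^2=S$, $\prod\lambda_i=S/6-1$, as a positive multiple of $(12-S)$ plus nonpositive terms.
\end{enumerate}
A natural first choice is the weight $c_{ij}=(\lambda_i-\lambda_j)^{-1}$ (or its square), as in the $n=3$ case. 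Failing that, I would try a shifted version $c_{ij}=(\lambda_k-\lambda_l)\,\cdot$(polynomial in $\lambda$) adapted to the sign pattern $\lambda_1<\lambda_2<0<\lambda_3<\lambda_4$. Together these give $\int(12-S)\ge0$, hence $S\le12$ since $S$ is constant.

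\textbf{Main obstacle.} The hard step is the choice of weight and the verification that the quadratic form $Q_c$, a form in $\rho_k$ and the distinct-index $h_{ijk}$ with coefficients rational in $\lambda$, is semidefinite. This reduces to sign estimates for homogeneous polynomials in $\lambda_1,\dots,\lambda_4$ on the set cut out by the three constraints and the ordering. I would first test on the Cartan configuration ($S=12$, $\lambda=\pm\sqrt3,\pm1/\sqrt3$... adjusted to the constraints) to calibrate the weights, then prove the polynomial inequalities using the sign information $\lambda_1\lambda_2>0$, $\lambda_3\lambda_4>0$.
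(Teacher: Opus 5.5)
\textbf{There is a genuine gap: the proposal never fixes the weight or verifies the sign conditions, and the weights you suggest cannot work.}

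Your outline has the paper's skeleton: a 3-form $\sum_{i<j}c_{ij}\,\omega_k\wedge\omega_l\wedge\omega_{ij}$, Codazzi, the three linear relations making $(h_{11k},\dots,h_{44k})$ a multiple of one vector, the Gauss--Bonnet relation $f_4=\tfrac12S^2-\tfrac23S+4$, and Stokes. But it stops exactly where the proof begins. No weight is fixed, and neither (1) nor (2) is verified, so there is not yet an argument. There is also a sign slip: with your identity and (2) you need $Q_c\le0$, since $Q_c\ge0$ gives nothing.

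The Cartan hypersurface shows why your candidate weights fail. It satisfies all hypotheses with $S=12$ and principal curvatures $\pm(\sqrt2+1),\pm(\sqrt2-1)$ (not $\pm\sqrt3,\pm1/\sqrt3$). So $h_{iik}\equiv0$ there, while $\sum_{i,j,k\ \mathrm{distinct}}h_{ijk}^2=S(S-4)=96$. Evaluated on this example, (1) with the corrected sign and (2) force $Q_c\equiv0$. Hence the fully crossing terms must contribute nothing there.

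In $d\big(\sum_{i<j}c_{ij}\theta_{ij}\big)$ the coefficient of $h_{ijk}^2$ ($i<j<k$) is a universal constant times
\[
\frac{c_{ij}(\lambda_i-\lambda_j)-c_{ik}(\lambda_i-\lambda_k)+c_{jk}(\lambda_j-\lambda_k)}{(\lambda_i-\lambda_j)(\lambda_i-\lambda_k)(\lambda_j-\lambda_k)} .
\]
\begin{itemize}
\item For $c_{ij}=(\lambda_i-\lambda_j)^{-1}$ the numerator is $1$.
\item For its square the numerator is $(a^2+ab+b^2)/\big(ab(a+b)\big)\neq0$, with $a=\lambda_i-\lambda_j$ and $b=\lambda_j-\lambda_k$.
\end{itemize}
In both cases all four crossing coefficients are nonzero and of one sign. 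These weights also make $\sum_{i<j}c_{ij}(1+\lambda_i\lambda_j)$ non-symmetric in the $\lambda_i$. So it is not a function of $S$ and $\mathcal K$ alone, and (2) would be a pointwise inequality you have no handle on.

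\textbf{The missing idea is the paper's specific weight,}
\[
\mathcal W_{ij}=\lambda_i^2+\lambda_i\lambda_j+\lambda_j^2-\tfrac S2 ,
\]
which equals $\lambda_k\lambda_l$ with $\{k,l\}$ complementary to $\{i,j\}$. This is close to your complementary-index guess, but a product rather than a difference. Two properties drive it.
\begin{itemize}
\item $\mathcal W_{ij}(\lambda_i-\lambda_j)=F(\lambda_i)-F(\lambda_j)$ with $F(x)=x^3-\tfrac S2x$. So the numerator above telescopes to $0$ and every crossing term cancels identically.
\item The curvature term is symmetric: $\sum_{i<j}\mathcal W_{ij}(1+\lambda_i\lambda_j)=\tfrac52S+\tfrac12S^2-\tfrac32f_4-\tfrac S4(12-S)=\tfrac12S-6$ after Gauss--Bonnet. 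It vanishes exactly at the Cartan value.
\end{itemize}

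What survives is $\sum_k\mathcal L_kh_{44k}^2$, where $\mathcal L_k=\lambda_1\lambda_2\lambda_3\,\mathcal P_k/\big(4(\lambda_1-\lambda_2)^2(\lambda_1-\lambda_3)^2(\lambda_2-\lambda_3)^2\lambda_4^2\big)$. One must still prove that the quintics satisfy $\mathcal P_k<0$ on $\lambda_1<\lambda_2<0<\lambda_3<\lambda_4$, the region forced by $\mathcal K>0$. The paper does this by substituting nonnegative parameters separately on $\{\lambda_2+\lambda_3\le0\}$ and $\{\lambda_2+\lambda_3>0\}$, then checking that every coefficient is negative. Stokes then yields $(\tfrac12S-6)\,\mathrm{vol}(M^4)=\int_{M^4}\sum_k\mathcal L_kh_{44k}^2\le0$, i.e.\ $S\le12$.

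Without a weight having these two cancellation properties, and without this polynomial sign check, steps (1)--(2) of your plan remain unproved.
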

For the sake of contradiction, we suppose $S>12$ through this subsection.

\subsubsection*{Preliminary computations}
Since $\lambda_1 < \lambda_2 < \lambda_3 < \lambda_4$ everywhere and $H, S$, and $f_4$ are constants, differentiating gives the equation:
\[\begin{cases}
 h_{11k} + h_{22k} + h_{33k} + h_{44k} &= 0\\[2mm]
 \lambda_1 h_{11k} + \lambda_2 h_{22k} + \lambda_3h_{33k} + \lambda_4 h_{44k}&= 0\\[2mm]
\lambda_1^3 h_{11k} + \lambda_2^3 h_{22k} + \lambda_3^3 h_{33k} + \lambda_4^3 h_{44k}& = 0.
\end{cases}\]
Because the $\lambda_i$ are distinct, we can apply Cramer's rule to solve for $k = 1, 2, 3, 4$, obtaining:
\begin{equation}\label{solu}
    \begin{aligned}
        h_{11k} &= - \left( \frac{\lambda_1}{\lambda_4} \right) \frac{(\lambda_3 - \lambda_4)(\lambda_2 - \lambda_4)}{(\lambda_3 - \lambda_1)(\lambda_2 - \lambda_1)} h_{44k},\\[2mm]
        h_{22k} & =  \left( \frac{\lambda_2}{\lambda_4} \right) \frac{(\lambda_4 - \lambda_3)(\lambda_4 - \lambda_1)}{(\lambda_2 - \lambda_1)(\lambda_3 - \lambda_2)} h_{44k},\\[2mm]
        h_{33k} &= - \left( \frac{\lambda_3}{\lambda_4} \right) \frac{(\lambda_4 - \lambda_1)(\lambda_4 - \lambda_2)}{(\lambda_3 - \lambda_1)(\lambda_3 - \lambda_2)} h_{44k}.
    \end{aligned}
\end{equation}
Also for $1\le i < j \le 4$, 
\[R_{ijij} = 1 + \lambda_i \lambda_j,\ \ \omega_{ij} = \sum_{k=1}^4 \frac{h_{ijk} \omega_k}{\lambda_i - \lambda_j}. \]
Now we borrow the same 3-form defined as in \cite{SX}:
\begin{definition} \label{theta:Phi}
    Let
\begin{align*}
\theta_{12} &= \omega_3 \wedge \omega_4 \wedge \omega_{12},\  
\theta_{13} = \omega_4 \wedge \omega_2 \wedge \omega_{13}, \ 
\theta_{14} = \omega_2 \wedge \omega_3 \wedge \omega_{14}, \\
\theta_{23} &= \omega_1 \wedge \omega_4 \wedge \omega_{23}, \ \theta_{24} = \omega_3 \wedge \omega_1 \wedge \omega_{24},\ 
\theta_{34} = \omega_1 \wedge \omega_2 \wedge \omega_{34},
\end{align*}
and define the basic 3-form \[
\Phi = \sum_{i<j} \theta_{ij}.
\]
\end{definition} 
To compute $d\Phi$, by virtue of symmetry, it suffices to calculate $d\theta_{12}$ and apply the corresponding permutations. Alternatively, we refer to Lemma 3.7 in \cite{SX} for the complete explicit calculations.

\begin{align}\label{theta12}
d\theta_{12} 
&= (d\omega_3)\wedge \omega_4 \wedge \omega_{12}
- \omega_3 \wedge (d\omega_4)\wedge \omega_{12}
+ \omega_3 \wedge \omega_4 \wedge (d\omega_{12}) \nonumber \\[2mm]
&= \omega_{3k}\wedge \omega_k \wedge \omega_4 \wedge \omega_{12}
- \omega_3 \wedge \omega_{4k}\wedge \omega_k \wedge \omega_{12} \nonumber
\\[1mm]
& \quad + \omega_3 \wedge \omega_4 \wedge (\omega_{1k}\wedge \omega_{k2}
- R_{1212}\,\omega_1 \wedge \omega_2) \nonumber \\[2mm]
&= \omega_{31}\wedge \omega_1 \wedge \omega_4 \wedge \omega_{12}
+ \omega_{32}\wedge \omega_2 \wedge \omega_4 \wedge \omega_{12}
- \omega_3 \wedge \omega_{41}\wedge \omega_1 \wedge \omega_{12} \nonumber\\[1mm]
&\quad  - \omega_3 \wedge \omega_{42}\wedge \omega_2 \wedge \omega_{12}
+ \omega_3 \wedge \omega_4 \wedge \omega_{13}\wedge \omega_{32} + \omega_3 \wedge \omega_4 \wedge \omega_{14}\wedge \omega_{42} - R_{1212} \mathrm{vol} \nonumber\\[2mm]
& = \vartheta_1 + \vartheta_2 - \vartheta_3 - \vartheta_4 + \vartheta_5 + \vartheta_6 - R_{1212} \mathrm{vol}.    
\end{align}
The forms denoted by $\vartheta_1, \dots, \vartheta_6$ expand as follows:
\begin{align}
\vartheta_1
&=\left(\frac{h_{312}\omega_2 + h_{313}\omega_3}{\lambda_3 - \lambda_1}\right)
\wedge \omega_1 \wedge \omega_4 \wedge
\left(\frac{h_{122}\omega_2 + h_{123}\omega_3}{\lambda_1 - \lambda_2}\right)  \nonumber\\[2mm]
& = \left(
\frac{h_{123}^2}{(\lambda_3 - \lambda_1)(\lambda_1 - \lambda_2)}
- \frac{h_{331}h_{221}}{(\lambda_3 - \lambda_1)(\lambda_1 - \lambda_2)}
\right) \mathrm{vol}.
\end{align}

\begin{align}
\vartheta_2
&= \left(\frac{h_{321}\omega_1 + h_{323}\omega_3}{\lambda_3 - \lambda_2}\right)
\wedge \omega_2 \wedge \omega_4 \wedge
\left(\frac{h_{123}\omega_3 + h_{121}\omega_1}{\lambda_1 - \lambda_2}\right) \nonumber \\[2mm]
&= \left(
-\frac{h_{123}^2}{(\lambda_3 - \lambda_2)(\lambda_1 - \lambda_2)}
+ \frac{h_{332}h_{112}}{(\lambda_3 - \lambda_2)(\lambda_1 - \lambda_2)}
\right) \mathrm{vol}.
\end{align}

\begin{align}
\vartheta_3
&= \omega_3 \wedge 
\left(\frac{h_{412}\omega_2 + h_{414}\omega_4}{\lambda_4 - \lambda_1}\right)
\wedge \omega_1 \wedge
\left(\frac{h_{124}\omega_4 + h_{122}\omega_2}{\lambda_1 - \lambda_2}\right) \nonumber\\[2mm]
&= \left(
-\frac{h_{124}^2}{(\lambda_4 - \lambda_1)(\lambda_1 - \lambda_2)}
+ \frac{h_{441}h_{221}}{(\lambda_4 - \lambda_1)(\lambda_1 - \lambda_2)}
\right)  \mathrm{vol}.
\end{align}
\begin{align}
\vartheta_4
&= \omega_3 \wedge 
\left(\frac{h_{421}\omega_1 + h_{424}\omega_4}{\lambda_4 - \lambda_2}\right)
\wedge \omega_2 \wedge
\left(\frac{h_{124}\omega_4 + h_{121}\omega_1}{\lambda_1 - \lambda_2}\right) \nonumber\\[2mm]
&= \left(
\frac{h_{124}^2}{(\lambda_4 - \lambda_2)(\lambda_1 - \lambda_2)}
- \frac{h_{442}h_{112}}{(\lambda_4 - \lambda_2)(\lambda_1 - \lambda_2)}
\right) \mathrm{vol}.
\end{align}

\begin{align}
\vartheta_5
&= \omega_3 \wedge \omega_4 \wedge
\left(\frac{h_{132}\omega_2 + h_{131}\omega_1}{\lambda_1 - \lambda_3}\right)
\wedge
\left(\frac{h_{321}\omega_1 + h_{322}\omega_2}{\lambda_3 - \lambda_2}\right) \nonumber\\[2mm]
&= \left(
-\frac{h_{123}^2}{(\lambda_1 - \lambda_3)(\lambda_3 - \lambda_2)}
+ \frac{h_{113}h_{223}}{(\lambda_1 - \lambda_3)(\lambda_3 - \lambda_2)}
\right)  \mathrm{vol}.
\end{align}

\begin{align}
\vartheta_6
&= \omega_3 \wedge \omega_4 \wedge
\left(\frac{h_{141}\omega_1 + h_{142}\omega_2}{\lambda_1 - \lambda_4}\right)
\wedge
\left(\frac{h_{421}\omega_1 + h_{422}\omega_2}{\lambda_4 - \lambda_2}\right) \nonumber\\[2mm]
&= \left(
-\frac{h_{124}^2}{(\lambda_1 - \lambda_4)(\lambda_4 - \lambda_2)}
+ \frac{h_{114}h_{224}}{(\lambda_1 - \lambda_4)(\lambda_4 - \lambda_2)}
\right)  \mathrm{vol}.
\end{align}
\vspace{2mm}

Next, we will compute the coefficients of every $h_{44k}^2$ in $d\Phi$, which will be used in some further constructions.

\vspace{2mm}


In $d\theta_{12}$, the term involving $h_{441}^2$ is given by
\begin{align}
    c_{121}h_{441}^2 &= -\frac{h_{331}h_{221}}{(\lambda_3 - \lambda_1) (\lambda_1 - \lambda_2)} - \frac{h_{441}h_{221}}{(\lambda_4 - \lambda_1) (\lambda_1 - \lambda_2)} \nonumber\\[2mm]
    &= \frac{\lambda_2 (\lambda_4 - \lambda_3)}{\lambda_4^2} \cdot \frac{\lambda_4(\lambda_3 - \lambda_2)(\lambda_3 - \lambda_1)^2-\lambda_3(\lambda_4 - \lambda_2)(\lambda_4 - \lambda_1)^2}{(\lambda_3 - \lambda_1)^2 (\lambda_3 - \lambda_2)^2 (\lambda_2 - \lambda_1)^2} h_{441}^2.
\end{align}
Switching indices $2$ and $3$ produces the term in $d\theta_{13}$ involving $h_{441}^2$, denoted by $c_{131}h_{441}^2$, where
\begin{align}
    c_{131}h_{441}^2 &= -\frac{h_{221} h_{331}}{(-\lambda_1 + \lambda_2) (\lambda_1 - \lambda_3)} - \frac{h_{331} h_{441}}{(\lambda_1 - \lambda_3) (-\lambda_1 + \lambda_4)} \nonumber \\[2mm]
    &= \frac{\lambda_3(\lambda_2 - \lambda_4)}{\lambda_4^2} \cdot \frac{\lambda_2(\lambda_4 - \lambda_3)(\lambda_4 - \lambda_1)^2 + \lambda_4(\lambda_3 - \lambda_2)(\lambda_2 - \lambda_1)^2}{(\lambda_3 - \lambda_1)^2 (\lambda_3-\lambda_2)^2 (\lambda_2-\lambda_1)^2} h_{441}^2.
\end{align}

Similarly, interchanging indices $2$ and $4$ leads to the corresponding term in $d\theta_{14}$ involving $h_{441}^2$, denoted by $c_{141}h_{441}^2$. Specifically,
\begin{align}
    c_{141}h_{441}^2 &= -\frac{h_{331}h_{441}}{(\lambda_3-\lambda_1)(\lambda_1-\lambda_4)} - \frac{h_{221}h_{441}}{(\lambda_2-\lambda_1)(\lambda_1-\lambda_4)} \nonumber\\[2mm]
    &= \frac{\lambda_3 - \lambda_2}{\lambda_4} \cdot\frac{\lambda_2(\lambda_4 - \lambda_3)(\lambda_3 - \lambda_1)^2 - \lambda_3 (\lambda_4 - \lambda_2)(\lambda_2 - \lambda_1)^2}{(\lambda_3 - \lambda_1)^2(\lambda_3-\lambda_2)^2(\lambda_2-\lambda_1)^2} h_{441}^2.
\end{align}

\vspace{3mm}

To calculate $c_{231}$, we first extract $c_{123}h_{443}^2$ from \eqref{theta12}:
\begin{align*}
    c_{123}h_{443}^2 &= \frac{h_{113}h_{223}}{(\lambda_1-\lambda_3)(\lambda_3-\lambda_2)}.
\end{align*}
Interchanging indices $1$ and $3$ then gives
\begin{align}
    c_{231}h_{441}^2 &= \frac{h_{331}h_{221}}{(\lambda_3 - \lambda_1)(\lambda_1 - \lambda_2)} = \frac{\lambda_2\lambda_3}{\lambda_4^2}\frac{(\lambda_1-\lambda_4)^2 (\lambda_2-\lambda_4)(\lambda_3-\lambda_4)}{(\lambda_1-\lambda_2)^2 (\lambda_1-\lambda_3)^2 (\lambda_2-\lambda_3)^2} h_{441}^2.
\end{align}

Likewise, for $c_{241}$, we extract the term involving $c_{124}$ from \eqref{theta12} and switch indices $1$ and $4$, resulting in
\begin{equation}
    c_{241} h_{441}^2 = \frac{h_{441}h_{221}}{(\lambda_4-\lambda_1)(\lambda_1-\lambda_2)} = -\frac{\lambda_2}{\lambda_4}\frac{(\lambda_3-\lambda_4)}{(\lambda_1-\lambda_2)^2 (\lambda_2-\lambda_3)} h_{441}^2.
\end{equation}

Finally, to determine $c_{341}$, we extract the term involving $c_{123}$ from \eqref{theta12} and interchange indices $1$ and $3$, followed by $2$ and $4$, which provides
\begin{align}
    c_{341}h_{441}^2 &= \frac{h_{331}^2 h_{441}^2}{(\lambda_3 - \lambda_1)(\lambda_1 - \lambda_4)} = \frac{\lambda_3}{\lambda_4}\frac{(\lambda_2-\lambda_4)}{(\lambda_1-\lambda_3)^2 (\lambda_2-\lambda_3)} h_{441}^2.
\end{align}

\vspace{3mm}

By the symmetry of $\theta_{ij}$, it is sufficient to explicitly compute the coefficient of $h_{441}^2$. The coefficients of the remaining $h_{44k}^2$ terms can subsequently be obtained via index permutation. Based on the preceding calculations, the coefficient of $h_{441}^2$ in $d\Phi$ is
\begin{align}\label{L1}
    L_1 &= c_{121} + c_{131} + c_{141} + c_{231} + c_{241} + c_{341} \nonumber \\[2mm]
    &= -\frac{(\lambda_1+\lambda_2) (\lambda_1+\lambda_3) (\lambda_2+\lambda_3) \Big(\lambda_1^2 (\lambda_2+\lambda_3)+9 \lambda_2\lambda_3 (\lambda_2+\lambda_3) + \lambda_1(\lambda_2^2+10\lambda_2\lambda_3 + \lambda_3^2)\Big)}{(\lambda_1-\lambda_2)^2 (\lambda_1-\lambda_3)^2 (\lambda_2-\lambda_3)^2 \lambda_4^2}.
\end{align}
Similarly, denoting the coefficient of $h_{44k}^2$ in $d\Phi$ by $L_k$, we define
\begin{equation}\label{Lk}
    L_k := -\frac{(\lambda_1+\lambda_2) (\lambda_1+\lambda_3) (\lambda_2+\lambda_3)}{(\lambda_1-\lambda_2)^2 (\lambda_1-\lambda_3)^2 (\lambda_2-\lambda_3)^2 \lambda_4^2}\ C_k, \quad k=2,3,4,
\end{equation}
where each $C_k$ is given by
\begin{align}\label{L234}
    C_2 &= \lambda_2 \lambda_3 (\lambda_2+\lambda_3) + \lambda_1^2 (\lambda_2+9 \lambda_3)+\lambda_1 (\lambda_2^2+10 \lambda_2 \lambda_3+9 \lambda_3^2), \\[2mm]
    C_3 &= \lambda_2 \lambda_3 (\lambda_2+\lambda_3)+\lambda_1^2 (9 \lambda_2+\lambda_3)+\lambda_1 (9 \lambda_2^2+10 \lambda_2 \lambda_3+\lambda_3^2),
\end{align}
\begin{align}
    C_4 &= \frac{\lambda_4^2(\lambda_3 - \lambda_1)^2(\lambda_2 - \lambda_1)^2}{\lambda_1^2 (\lambda_3 - \lambda_4)^2 (\lambda_2 - \lambda_4)^2}\Big(\lambda_1^2 (\lambda_2+\lambda_3)+\lambda_2 \lambda_3 (\lambda_2+\lambda_3)+\lambda_1 (\lambda_2^2-6 \lambda_2 \lambda_3+\lambda_3^2)\Big).
\end{align}

Notice that for the coefficient $C_4$, directly interchanging the indices $1$ and $4$ naturally yields $h_{114}^2$ rather than $h_{444}^2$. To express it in terms of $h_{444}^2$, we apply the relation $h_{114}^2 = \alpha h_{444}^2$ given in \eqref{solu}. This substitution accounts for the additional factor in $C_4$.

\vspace{5mm}

We now introduce a new 3-form $\varphi$ on $M^4$.

\begin{definition}
    By assigning a weight 
    \[
        \mathcal{W}_{ij} = \left(\lambda_i^2 + \lambda_i\lambda_j + \lambda_j^2\right) -\frac{S}{2}, \quad i,j=1,2,3,4
    \]
    to each $\theta_{ij}$, we define the 3-form $\varphi$ on $M^4$ by
    \[
        \varphi := \sum_{i<j} \mathcal{W}_{ij} \theta_{ij}.
    \]
\end{definition}

It then follows that 
\[
    d\varphi = \sum_{i<j} \left( \mathcal{W}_{ij}\ d\theta_{ij} + d\mathcal{W}_{ij}\wedge \theta_{ij} \right).
\]

Because the four principal curvatures are distinct everywhere, for each $p \in M^4$, there exists an open neighborhood $N_p$ of $p$ equipped with a local smooth frame field $\{e_1, e_2, e_3, e_4\}$. With respect to the dual coframe $\{\omega_1, \omega_2, \omega_3, \omega_4\}$, the second fundamental form diagonalizes. That is, 
\[
    \sum_{i,j} h_{ij} \omega_i \otimes \omega_j = \sum_{i,j} \lambda_i \delta_{ij} \omega_i \otimes \omega_j.
\] 
Consequently, differentiating the smooth functions $\lambda_i : N_p \to \mathbb{R}$ along $e_k$ leads to 
\[
    e_k(\lambda_i) = h_{iik} \quad \text{on } N_p.
\]

As an outline of the subsequent argument: for the terms involving $h_{iik}^2$, equation \eqref{solu} implies that it suffices to compute $h_{44k}^2$ using our preliminary results. Furthermore, we will demonstrate that the fully crossing terms $h_{ijk}^2\ (i\ne j \ne k)$ cancel internally. Finally, we will explicitly evaluate the sum of the weighted curvature terms $\mathcal{W}_{ij}R_{ijij}$. Note that all computations hereafter are local.

\subsubsection*{Computation of $d\mathcal{W}_{ij} \wedge \theta_{ij}$} 

By expressing $\theta_{ij} = \omega_k \wedge \omega_l \wedge \omega_{ij}$, we deduce that
\[ d\mathcal{W}_{ij}\wedge\theta_{ij} = \big((\partial_{\lambda_i}\mathcal{W}_{ij})d\lambda_i + (\partial_{\lambda_j}\mathcal{W}_{ij}) d\lambda_j\big)\wedge\omega_k\wedge \omega_l\wedge\omega_{ij},
\]
where $\partial_{\lambda_i}\mathcal{W}_{ij} := 2\lambda_i + \lambda_j$ and $\partial_{\lambda_j}\mathcal{W}_{ij} := \lambda_i + 2\lambda_j$ denote the corresponding partial derivatives. Note that the constant scalar curvature $S$ vanishes upon differentiation.

\vspace{2mm}

 For example, setting $(i,j)=(1,2)$ leads to
\begin{align*}
d\mathcal{W}_{12}\wedge\theta_{12} & = \left(\sum_{p=1}^4 (\partial_{\lambda_1}\mathcal{W}_{12}) h_{11p}\omega_p + (\partial_{\lambda_2}\mathcal{W}_{12}) h_{22p}\omega_p \right)\wedge\omega_3\wedge \omega_4\wedge\left(\sum_{q=1}^4\frac{h_{12q}}{\lambda_{1}-\lambda_2}{\omega_q}\right).
\end{align*}
The only non-vanishing terms occur when $(p,q)=(1,2)$ or $(2,1)$. Consequently, 
\begin{align*}
d\mathcal{W}_{12}\wedge\theta_{12} & = \frac{(\partial_{\lambda_1}\mathcal{W}_{12}) h_{111}h_{122} \!+\! (\partial_{\lambda_2}\mathcal{W}_{12}) h_{221}h_{122}}{\lambda_1 - \lambda_2}\mathrm{vol} - \frac{(\partial_{\lambda_1}\mathcal{W}_{12}) h_{112}h_{121} \!+\! (\partial_{\lambda_2}\mathcal{W}_{12}) h_{222}h_{121}}{\lambda_1 - \lambda_2} \mathrm{vol}.
\end{align*}

Focusing on the terms involving $h_{44k}^2$, we specifically set $k=1$ and extract the corresponding coefficient
\begin{equation}
    w'_{121} = \frac{(\partial_{\lambda_1}\mathcal{W}_{12}) h_{111}h_{221}+ (\partial_{\lambda_2}\mathcal{W}_{12})h_{221}^2}{\lambda_1 - \lambda_2}.
\end{equation}

Similarly, by virtue of symmetry, the corresponding coefficients for the remaining $\theta_{ij}$ are given by
\begin{align}
    w'_{131} &= \frac{(\partial_{\lambda_1}\mathcal{W}_{13}) h_{111}h_{331} + (\partial_{\lambda_3}\mathcal{W}_{13}) h_{331}^2}{\lambda_1 - \lambda_3}, \\[2mm]
    w'_{141} &= \frac{(\partial_{\lambda_1}\mathcal{W}_{14}) h_{111}h_{441}+ (\partial_{\lambda_4}\mathcal{W}_{14}) h_{441}^2}{\lambda_1 - \lambda_4}, \\[2mm]
    w'_{231} &= 0, \quad w'_{241} = 0, \quad w'_{341} = 0,
\end{align}
the last three coefficients vanish due to the presence of repeated basis forms, such as $\omega_1\wedge\omega_1$ or $\omega_2\wedge\omega_2$, in their respective wedge products.

\vspace{2mm}

\subsubsection*{Step 1: the coefficient of $h_{44k}^2$ in $d\varphi$}
Combining the preliminary computations for $d\theta_{ij}$ and $d\mathcal{W}_{ij} \wedge \theta_{ij}$ derived above, we find that the term involving $h_{441}^2$ in $d\varphi$ is 
\begin{align*} 
    \mathcal{L}_1 h_{441}^2 :=\  & (\mathcal{W}_{12}c_{121}h_{441}^2  + w'_{121}) + (\mathcal{W}_{13}c_{131}h_{441}^2 + w'_{131}) + (\mathcal{W}_{14}c_{141} h_{441}^2+ 
    w'_{141}) \\[2mm]
    & + (\mathcal{W}_{23}c_{231}h_{441}^2) + (\mathcal{W}_{24}c_{241}h_{441}^2) + (\mathcal{W}_{34}c_{341}h_{441}^2).
\end{align*}

Substituting \eqref{solu} into $w'_{ij1}$ to eliminate $h_{kk1}$ and simplifying extensively, we find that the coefficient of $h_{441}^2$ in $d\varphi$ is  
\begin{equation}\label{calLk}\mathcal{L}_k=\frac{\lambda_1\lambda_2\lambda_3\mathcal{P}_k(\lambda_1,\lambda_2,\lambda_3)}{4(\lambda_1-\lambda_2)^2 (\lambda_1-\lambda_3)^2 (\lambda_2-\lambda_3)^2 \lambda_4^2},\ \ k=1,2,3,4,\end{equation}
where $\mathcal{P}_k:=\mathcal{P}_k(\lambda_1,\lambda_2,\lambda_3)$ are homogeneous polynomials of degree 5. For example,
\begin{align*}
    \mathcal{P}_1= & \lambda_1^5+5 \lambda_1^4 (\lambda_2+\lambda_3)+(\lambda_2+\lambda_3) (\lambda_2^2+\lambda_2 \lambda_3+\lambda_3^2)^2+\lambda_1^3 (9 \lambda_2^2+20 \lambda_2 \lambda_3+9 \lambda_3^2)\\[2mm]
    &+\lambda_1^2 (7 \lambda_2^3+27 \lambda_2^2 \lambda_3+27 \lambda_2 \lambda_3^2+7 \lambda_3^3)+\lambda_1 (3 \lambda_2^4+14 \lambda_2^3 \lambda_3+23 \lambda_2^2 \lambda_3^2+14 \lambda_2 \lambda_3^3+3 \lambda_3^4),
\end{align*}
and the explicit expressions for the remaining $\mathcal{P}_k$ are provided in \eqref{calP2}, \eqref{calP3}, and \eqref{calP4} of Appendix A.

\vspace{5mm}

According to Lemma \ref{lemma4.3:K>0}, the domain for each $\mathcal{P}_k$ is restricted to $\lambda_1 < \lambda_2 < 0 < \lambda_3 < \lambda_4$. To determine the sign of each $\mathcal{P}_k$, we decompose the domain as follows:
\[
    U_1 := \{\lambda_2+\lambda_3 \le 0\}, \quad U_2 := \{\lambda_2+\lambda_3 > 0 \}.
\]
Within each subregion, we introduce suitable non-negative variables $a, b, c \ge 0$ and examine the signs of the resulting coefficients.

\subsubsection*{The subregion $U_1 =\{\lambda_2+\lambda_3 \le 0\}$:}

On $U_1$, we set $a>0$, $b \ge 0$, and $c>0$ such that
\begin{equation}\label{abcU1}
    \begin{cases}
        \lambda_1 = -a -b -c,\\
        \lambda_2 = -a-b,\\
        \lambda_3 = a.
    \end{cases}
\end{equation}
It then follows that $\lambda_4 = -\lambda_1 - \lambda_2 - \lambda_3 = a + 2b + c > a = \lambda_3$, which guarantees the required ordering $\lambda_1 < \lambda_2 < 0 < \lambda_3 < \lambda_4$. For example, explicitly substituting these relations into $\mathcal{P}_1$, we obtain
\begin{align}
   \mathcal{P}_1 = &\ -26 b^5 - 69 b^4 c - 74 b^3 c^2 - 39 b^2 c^3 - 10 b c^4 - c^5 - 
 4 a^3 (3 b^2 + 3 b c + c^2) \nonumber \\[2mm]
 & -2  a^2  (24 b^3 + 37 b^2 c + 21 b c^2 + 4 c^3) - 
 a (61 b^4 + 128 b^3 c + 105 b^2 c^2 + 38 b c^3 + 5 c^4),
\end{align}

It is evident that all coefficients of the resulting polynomial in $a, b$, and $c$ are negative. Moreover, since certain terms involve only $a$ and $c$, the strict inequality $\mathcal{P}_1 < 0$ remains valid even if $b=0$.

Applying the exact same procedure, we deduce that 
\[
    \mathcal{P}_k < 0, \quad k=2,3,4.
\] 
 For their fully expanded explicit expressions, please see Appendix A.1.

\subsubsection*{The subregion $U_2=\{\lambda_2+\lambda_3 > 0\}$}

Similarly, on $U_2$, we introduce the variables $a > 0$, $b>0$, and $c>0$ defined by
\begin{align}\label{abcU2}
    \begin{cases} 
        \lambda_1 = -a - 2 b - c,\\
        \lambda_2 = -c,\\
        \lambda_3 = b + c.
    \end{cases}
\end{align}
Substituting these expressions into each $\mathcal{P}_k$, we find that the resulting coefficients of $a, b$, and $c$ are all strictly negative. Thus, we have
\[
    \mathcal{P}_k < 0, \quad k=1,2,3,4.
\]
For the completely expanded forms, please see Appendix A.2.

\vspace{2mm}

Therefore, in the expression for $d\varphi$, the coefficients of $h_{44k}^2$ are all strictly negative; that is, 
\[\mathcal{L}_k < 0, \quad k=1,2,3,4.\]

\vspace{3mm}

\subsubsection*{Step 2: fully crossing terms $h_{ijk}^2$} 

We claim that all fully crossing terms $h_{ijk}^2$ in $d\varphi$ cancel out. Indeed, for all distinct $1\le i,j,k \le 4$, the coefficient of $h_{ijk}^2$ in $d\varphi$ satisfies the identity
\begin{align}\label{crossing}
    & \frac{\mathcal{W}_{ij}}{(\lambda_i-\lambda_k)(\lambda_j-\lambda_k)} - \frac{\mathcal{W}_{ik}}{(\lambda_i-\lambda_j)(\lambda_j-\lambda_k)} + \frac{\mathcal{W}_{jk}}{(\lambda_i-\lambda_j)(\lambda_i-\lambda_k)} \nonumber\\[2mm]
    &= \frac{\left(\lambda_i^3-\lambda_j^3 - \lambda_i^3+\lambda_k^3+\lambda_j^3-\lambda_k^3\right) - \frac{S}{2}(\lambda_i-\lambda_j - \lambda_i+\lambda_k+\lambda_j-\lambda_k)}{(\lambda_i-\lambda_k)(\lambda_j-\lambda_k)(\lambda_i-\lambda_j)} \nonumber\\[2mm]
    &= 0,
\end{align}
thus, all the crossing terms vanish in $d\varphi$.


\vspace{2mm}

\subsubsection*{Step 3: weighted curvature terms $\mathcal{W}_{ij}R_{ijij}$} 
Recall that 
\[\sum_{1\le i<j\le 4}R_{ijij} = \frac{1}{2}R_M = \frac{1}{2}\big(n(n-1)-S\big) = \frac{1}{2}(12 - S),\]
and from \eqref{Rijkl}, we have for $i < j$,
\[R_{ijij} = 1 + h_{ii}h_{jj} - h_{ij}^2 = 1 + \lambda_i \lambda_j.\]
A direct calculation shows:
\begin{align*}
    & \quad \sum_{i<j} \left(\lambda_i^2 + \lambda_i\lambda_j + \lambda_j^2 \right) R_{ijij} = \sum_{i<j} \left(\lambda_i^2 + \lambda_i\lambda_j + \lambda_j^2 \right)\big(1 + \lambda_i \lambda_j\big) \\[2mm]
    &= \sum_{i<j} \big(\lambda_i^2 + \lambda_j^2\big) + \sum_{i<j} \lambda_i\lambda_j + \sum_{i<j} \big(\lambda_i^3\lambda_j + \lambda_i\lambda_j^3\big) + \sum_{i<j} \lambda_i^2\lambda_j^2\\[2mm]
    &= 3 \sum_{i} \lambda_i^2 + \frac{\big(\sum_{i} \lambda_i\big)^2-\sum_{i} \lambda_i^2}{2} + \sum_{i \neq j} \lambda_i^3\lambda_j + \frac{\big(\sum_{i} \lambda_i^2\big)^2-\sum_{i} \lambda_i^4}{2} \\[2mm]
    &= 3 \sum_i \lambda_i^2 + \frac{\big(\sum_i \lambda_i\big)^2-\sum_i\lambda_i^2}{2} + \Big(\sum_i\lambda_i^3 \sum_i\lambda_i -\sum_i \lambda_i^4 \Big)+ \frac{\big(\sum_i \lambda_i^2\big)^2-\sum_i \lambda_i^4}{2} \\[2mm]
    &= \frac{5}{2}S + \frac{1}{2}S^2 - \frac{3}{2}f_4.
\end{align*}

\vspace{2mm}

From Lemma \ref{lemmaGB}, we have that the weighted curvature term is 
\begin{align*}
    \mathcal{R} &:= \left(\frac{5}{2}S + \frac{1}{2}S^2 - \frac{3}{2}f_4\right) -\frac{S}{2}\sum_{1\le i<j\le 4}R_{ijij}\\[1.5mm]
    &= \left(-\frac{1}{4}S^2+\frac{7}{2}S-6\right)-\frac{S}{2}\left(\frac{1}{2}(12-S)\right) = \frac{1}{2}S-6.
\end{align*}
Under our assumption that $S>12$, we have $\mathcal{R} > 0$.

\vspace{5mm}

Now, combining the results from \textit{Step 1} through \textit{Step 3}, we establish a contradiction and therefore prove Lemma \ref{lemmaS>12}. 

\begin{proof}[Proof of Lemma \ref{lemmaS>12}]
    Integrating $d\varphi$ over $M^4$ and applying Stokes' Theorem, we have
    \begin{align*}
        0 &= \int_{\partial M^4}\varphi = \int_{M^4} d\varphi = \int_{M^4} \left(\sum_{k=1}^4\mathcal{L}_k h_{44k}^2-\mathcal{R}\right) \mathrm{vol}.
    \end{align*}
    However, as shown in \textit{Step 1}, $\mathcal{L}_k < 0$, and from \textit{Step 3}, $\mathcal{R} > 0$. This implies that the right-hand side is strictly negative, contradicting the left-hand side being $0$.

\vspace{1mm}

    Therefore, we must have $S \le 12$, which is equivalent to the scalar curvature being non-negative.
\end{proof}

\subsection{$S=12$ and $M^4$ is isoparametric}

\begin{lemma}\label{lemmaS=12}
Let $M^4 \hookrightarrow \mathbb{S}^5(1)$ be a closed minimal hypersurface with constant scalar curvature, constant Gauss-Kronecker curvature, and four distinct principal curvatures everywhere. If the scalar curvature is non-negative (i.e. $S \le 12$), then $S=12$ and $M^4$ is isoparametric.
\end{lemma}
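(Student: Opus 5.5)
Under the hypothesis $S\le 12$, the plan is to run the 3-form argument of the previous subsection with the inequality reversed. The aim is a closed 3-form whose exterior derivative is pointwise $\ge 0$ (or $\le 0$). Its integral vanishes by Stokes, so both the curvature part and the $h_{44k}^2$ part must vanish identically.

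\textbf{First attempt: the unweighted form.} Integrate $d\Phi$, with $\Phi$ the basic 3-form of Definition \ref{theta:Phi}. By the computation leading to \eqref{L1} and \eqref{Lk},
\[0=\int_{M^4} d\Phi=\int_{M^4}\Big(\sum_k L_k h_{44k}^2+(\text{crossing terms})-\tfrac12(12-S)\Big)\mathrm{vol}.\]
Two things must be checked. First, the crossing terms must cancel: the identity \eqref{crossing} with weight $\mathcal W\equiv 1$ gives numerator $0$, so they do. Second, on $\lambda_1<\lambda_2<0<\lambda_3<\lambda_4$ every $L_k$ must have a fixed sign. If $L_k\le 0$ there, then since $12-S\ge0$ every integrand term is $\le0$. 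Hence $S=12$ and $h_{44k}\equiv0$, so by \eqref{solu} all $h_{iik}=0$.

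\textbf{Preferred route: a shifted weight.} I expect the sign of $L_k$ to be the main obstacle, because the factor $(\lambda_1+\lambda_2)(\lambda_1+\lambda_3)(\lambda_2+\lambda_3)$ changes sign across $U_1,U_2$. The fix is a shifted weight $\mathcal W'_{ij}=\lambda_i^2+\lambda_i\lambda_j+\lambda_j^2-\mu S$, with the constant $\mu$ chosen so that:
\begin{itemize}
\item the coefficients $\mathcal L_k(\mu)=\mathcal L_k+(\tfrac12-\mu)S\,L_k$ of $h_{44k}^2$ are $\ge0$ on the admissible region;
\item the weighted curvature term $\mathcal R(\mu)=\tfrac52S+\tfrac12S^2-\tfrac32f_4-\mu S\cdot\tfrac12(12-S)$ has the opposite sign when $S\le12$.
\end{itemize}
Using Lemma \ref{lemmaGB} (so $f_4=\tfrac12S^2-\tfrac23S+4$), we get $\mathcal R(\mu)=-\tfrac14S^2+\tfrac72S-6-\mu S(6-\tfrac S2)$. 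The crossing terms still cancel for any $\mu$, since \eqref{crossing} only uses linearity in the constant. The algebra then reduces to the same substitution strategy as before: express $S$ as a quadratic in $a,b,c$ via \eqref{abcU1}, \eqref{abcU2}, and check that a homogeneous polynomial has coefficients of one sign in $a,b,c$. I would first try $\mu=0$ or $\mu=1$. Note that $\mathcal R(0)=-\tfrac14(S-2)(S-12)$, which is $\le0$ for $4<S\le12$ and vanishes exactly at $S=12$. So with $\mu=0$ it suffices that all weighted $h_{44k}^2$ coefficients are $\ge0$, which is plausible since they equal $\mathcal L_k$ plus a multiple of $L_k$.

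\textbf{Conclusion.} Once $S=12$ and $h_{iik}\equiv0$ for all $i,k$ are obtained, $e_k(\lambda_i)=h_{iik}=0$. Thus every principal curvature is constant, so $M^4$ is isoparametric with four distinct principal curvatures. By Münzner's classification with $n=4$, $g=4$ and multiplicities $(1,1,1,1)$, $M^4$ is Cartan's minimal hypersurface with $S=12$.

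If strict positivity of the $h_{44k}^2$ coefficients fails on some boundary stratum, for instance where $\lambda_2+\lambda_3=0$, I would instead use that the curvature term forces $S=12$ alone. Then I would apply the analogous identity at $S=12$, together with \eqref{eq:b1} and \eqref{eq:b3}, to force $h_{ijk}\equiv0$ off that stratum and conclude by continuity.
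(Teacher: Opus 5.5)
You are right about the setup. With $\mu=0$, your shifted form is exactly the paper's $\Psi=\sum_{i<j}W_{ij}\theta_{ij}=\varphi+\frac S2\Phi$. Your bookkeeping is also correct: the crossing terms cancel, $\mathcal L_k(\mu)=\mathcal L_k+(\frac12-\mu)SL_k$, and $\mathcal R(0)=-\frac14(S-2)(S-12)$. But the sign analysis, which is the whole content of the lemma, is both inverted and missing.

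First, $-\frac14(S-2)(S-12)\ge 0$, not $\le 0$, for $2\le S\le 12$; for example $S=8$ gives $6$. So in the paper's notation the integrand $\sum_k\tilde L_k h_{44k}^2-\tilde R$ has a fixed sign only if $\tilde L_k\le 0$. That is the opposite of the condition you propose to check. In fact your criterion ``$\mathcal L_k(\mu)\ge 0$ on the admissible region'' fails for every $\mu$. Each $L_k$ carries the factor $(\lambda_1+\lambda_2)(\lambda_1+\lambda_3)(\lambda_2+\lambda_3)$. This slice meets $\lambda_1<\lambda_2<0<\lambda_3<\lambda_4$, for instance at $(-3,-1,1,3)$. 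On the slice $\lambda_2+\lambda_3=0$ you therefore get $\mathcal L_k(\mu)=\mathcal L_k$, which is strictly negative by the computation of $\mathcal P_k$ on $U_1$ with $b=0$. Followed literally, your plan would reject $\mu=0$, and $\mu=1$ as well.

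Second, even with the sign corrected, ``plausible, since they equal $\mathcal L_k$ plus a multiple of $L_k$'' is not an argument. As you yourself note, $L_k$ changes sign, and wherever $L_k>0$ you must show quantitatively that $\frac S2 L_k<|\mathcal L_k|$. This is exactly what the paper proves. It substitutes $S=\sum_i\lambda_i^2$ and $\lambda_4=-\lambda_1-\lambda_2-\lambda_3$ and writes $\tilde L_k=-\frac{P_k}{(\lambda_1-\lambda_2)^2(\lambda_1-\lambda_3)^2(\lambda_2-\lambda_3)^2\lambda_4^2}$, with $P_k$ homogeneous of degree $8$. It then shows $P_k>0$ strictly. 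To do so it splits the cone into four pieces $V_1,\dots,V_4$ according to where $-\lambda_2$ sits relative to $\lambda_3/3$, $\lambda_3$ and $3\lambda_3$; this is a finer split than $U_1,U_2$. On each piece it uses a linear substitution in $a,b,c\ge 0$ under which all coefficients are positive.

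Strict negativity of $\tilde L_k$, together with $\tilde R\ge 0$ (since $S=6(\mathcal K+1)>6$), then gives $h_{44k}\equiv 0$ and $S=12$ directly from $\int_{M^4}d\Psi=0$. Your fallback through the formula for $\Delta f_4$ is therefore unnecessary, and as sketched it would not obviously work anyway. Without an actual proof that $P_k>0$, or an equivalent sign estimate, the proposal does not establish the lemma.
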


To begin with, we slightly modify the 3-form $\varphi$ to construct a new 3-form $\Psi$ for this subsection.

\begin{definition}
    By assigning the new weight 
    \[
        W_{ij} = \lambda_i^2 + \lambda_i\lambda_j + \lambda_j^2 = \mathcal{W}_{ij} + \frac{S}{2}, \quad i,j=1,2,3,4
    \]
    to each $\theta_{ij}$, we define a different 3-form $\Psi$ on $M^4$ by 
    \[
        \Psi = \sum_{i<j}W_{ij}\theta_{ij} = \varphi + \frac{S}{2}\Phi,
    \]
    where $\theta_{ij}$ and $\Phi$ are given in Definition \ref{theta:Phi}.
\end{definition}

Similarly, we will compute $d\Psi$ using the results established in the previous subsection. Note that all subsequent computations are local.

\subsubsection*{Step 1: the coefficient of $h_{44k}^2$ in $d\Psi$}

Thanks to $d\mathcal{W}_{ij} = dW_{ij}$, substituting \eqref{L1}, \eqref{Lk}, \eqref{L234}, and \eqref{calLk} into the exterior derivative $d\Psi = d\varphi + \frac{S}{2}d\Phi$, we find that the coefficient of $h_{441}^2$ in $d\Psi$ is given by
\begin{equation}\label{tilL}
    \tilde{L}_1 =\mathcal{L}_1 + \frac{S}{2}L_1 =-\frac{P_1(\lambda_1,\lambda_2,\lambda_3)}{(\lambda_1-\lambda_2)^2 (\lambda_1-\lambda_3)^2 (\lambda_2-\lambda_3)^2 \lambda_4^2},
\end{equation}
where the numerator $P_1$ is 
\begin{align}\label{tilL1}
P_1(\lambda_1,\lambda_2,\lambda_3)  = & \lambda_1^6 \big(\lambda_2-\lambda_3\big)^2+3 \lambda_1^5 \big(\lambda_2-\lambda_3\big)^2 \big(\lambda_2+\lambda_3\big)+9 \lambda_2^2 \lambda_3^2 \big(\lambda_2+\lambda_3\big)^2 \big(\lambda_2^2+\lambda_2 \lambda_3+\lambda_3^2\big) \nonumber \\[2mm]
    &+\lambda_1^4 \big(4 \lambda_2^4+5 \lambda_2^3 \lambda_3-6\lambda_2^2\lambda_3^2+5 \lambda_2 \lambda_3^3+4 \lambda_3^4\big) \nonumber\\[2mm]
    & +3 \lambda_1^3 \big(\lambda_2^5+7 \lambda_2^4 \lambda_3+8 \lambda_2^3 \lambda_3^2+8 \lambda_2^2 \lambda_3^3 +7 \lambda_2 \lambda_3^4+\lambda_3^5\big) \nonumber\\[2mm]
    & +3 \lambda_1 \lambda_2 \lambda_3 \big(2 \lambda_2^5+15 \lambda_2^4 \lambda_3+31 \lambda_2^3 \lambda_3^2+31 \lambda_2^2 \lambda_3^3+15 \lambda_2 \lambda_3^4+2 \lambda_3^5\big) \nonumber\\[2mm]
    & +\lambda_1^2 \big(\lambda_2^6+21 \lambda_2^5 \lambda_3+66 \lambda_2^4 \lambda_3^2+88 \lambda_2^3 \lambda_3^3+66 \lambda_2^2 \lambda_3^4+21 \lambda_2 \lambda_3^5+\lambda_3^6\big).
\end{align}
Similarly, denoting the coefficient of $h_{44k}^2$ in $d\Psi$ by $\tilde{L}_k$, we define
\begin{equation}\label{tilLk}
    \tilde{L}_k := -\frac{P_k(\lambda_1,\lambda_2,\lambda_3)}{(\lambda_1-\lambda_2)^2 (\lambda_1-\lambda_3)^2 (\lambda_2-\lambda_3)^2 \lambda_4^2}, \quad k=2,3,4,
\end{equation}
where each $P_k := P_k(\lambda_1, \lambda_2, \lambda_3)$ is a homogeneous polynomial of degree $8$. Please see \eqref{tilL2}, \eqref{tilL3}, and \eqref{tilL4} in Appendix B for their full explicit expressions.

\vspace{3mm}

According to Lemma \ref{lemma4.3:K>0}, the domain for each $P_k$ is restricted to $\lambda_1 < \lambda_2 < 0 < \lambda_3 < \lambda_4$. Following a similar argument, to determine the sign of each $P_k$, we partition the domain into four subregions:
\[
    V_1 := \{-\lambda_2 \ge 3\lambda_3\}, \ 
    V_2 := \{\lambda_3 \le -\lambda_2 \le 3\lambda_3\}, \ 
    V_3 := \left\{\frac{1}{3}\lambda_3 \ge -\lambda_2 \right\}, \ 
    V_4 := \left\{\frac{1}{3}\lambda_3 < -\lambda_2 < \lambda_3 \right\},
\]
and within each subregion, we introduce a suitable change of variables involving non-negative parameters $a, b, c \ge 0$.

\subsubsection*{The subregion $V_1$}

On $V_1 =\{-\lambda_2 \ge 3\lambda_3\}$, we set $a>0$, $b\ge 0$, and $c>0$ such that
\begin{equation}\label{abc}
    \begin{cases}
        \lambda_1 = -3a - b - c, \\
        \lambda_2 = -3a - b,\\
        \lambda_3 = a.
    \end{cases}
\end{equation}
It then follows that $\lambda_4 = -\lambda_1 - \lambda_2 - \lambda_3 = 5a+2b+c > a = \lambda_3$. It is evident that $\lambda_1 < \lambda_2 < 0 < \lambda_3$ and $-\lambda_2 \ge 3\lambda_3$, which ensures that this change of variables satisfies all constraints on the $\lambda_i$ within $V_1$.

\vspace{1mm}

Substituting \eqref{abc} into \eqref{tilL1}, expanding completely, and collecting terms with respect to descending powers of $a$, we obtain the following expression:
\begin{align*}
    P_1 =\ & 29952 a^8 + 960 a^7 (91 b + 61 c)  + 
 16 a^6 (6967 b^2 + 9239 b c + 2929 c^2) \\[1.5mm]
 & + 
 24 a^5 (3400 b^3 + 6635 b^2 c + 4069 b c^2 + 762 c^3) \\[1.5mm]
 &  + 
 4 a^4 (9415 b^4 + 23788 b^3 c + 21069 b^2 c^2 + 7600 b c^3 + 
    931 c^4) \\[1.5mm]
 & + 
 8 a^3 (1410 b^5 + 4273 b^4 c + 4828 b^3 c^2 + 2504 b^2 c^3+ 
    587 b c^4 + 48 c^5)\\[1.5mm]
 & + 
 2 a^2 (b + c)^2 (1076 b^4 + 1556 b^3 c + 783 b^2 c^2 + 152 b c^3 + 
    8 c^4) \\[1.5mm]
&  + 
 8 a b (b + c)^2 (30 b^4 + 53 b^3 c + 35 b^2 c^2 + 10 b c^3 + c^4) \\[1.5mm]
 & + b^2 (2 b^2 + 3 b c + c^2)^2 (3 b^2 + 3 b c + c^2).
\end{align*}
Since $a>0,\ b\ge0,\ c > 0$ and all the resulting coefficients are strictly positive, $P_1 > 0$.

\vspace{2mm}
Similarly, substituting \eqref{abc} into $P_2, P_3$, and $P_4$, we obtain polynomials in $a, b$, and $c$ whose coefficients are all strictly positive. Consequently, we deduce that
\[P_k > 0, \quad k = 2, 3, 4.\]
For their explicit expressions, please see Appendix B.1.

\vspace{2mm}
 
 \subsubsection*{The subregion $V_2$}
Similar to our approach for $V_1$, on $V_2  = \{\lambda_3 \le - \lambda_2 \le 3\lambda_3\}$, we introduce the variables $a\ge 0$, $b\ge 0$, and $c>0$ defined by
\begin{equation}\label{abcV2}
    \begin{cases}
        \lambda_1 = -3 a - b - c,\\
        \lambda_2 = -3a - b,\\
        \lambda_3 = a + b.
    \end{cases}
\end{equation}
Note that $a$ and $b$ cannot vanish simultaneously; otherwise, we would have $\lambda_2=\lambda_3=0$, which is a contradiction. It is straightforward to verify that $\lambda_4 = 5a + b + c > \lambda_3$. Furthermore, we have $\lambda_1 < \lambda_2 < 0 < \lambda_3$ and $\lambda_3 \le -\lambda_2 \le 3\lambda_3$, confirming that this substitution is valid within $V_2$.

\vspace{2mm}

Thus, by substituting \eqref{abcV2} into \eqref{tilL1} and \eqref{tilLk} and fully expanding the results, we obtain polynomials $P_k$ in $a, b$, and $c$ with strictly positive coefficients for $k=1,2,3,4$. Because certain terms involve only $a$ and $c$, or only $b$ and $c$, we have the strict inequalities
\[
    P_k > 0, \quad k=1,2,3,4.
\]
For the completely expanded explicit expressions, please see Appendix B.2.

\subsubsection*{The subregion $V_3$}

In $V_3 = \left\{\frac{1}{3}\lambda_3 \ge -\lambda_2 \right\}$, we introduce the variables $a>0$, $b\ge 0$, and $c>0$ such that
\begin{equation}\label{abcV3}
    \begin{cases}
        \lambda_1 = -5 a - 2 b - c,\\
        \lambda_2 = -a, \\
        \lambda_3 = 3 a + b.
    \end{cases}
\end{equation}
A direct verification shows that $\lambda_4 = 3a + b + c > \lambda_3 > 0 > \lambda_2 > \lambda_1$.

Similarly, upon substituting \eqref{abcV3} into \eqref{tilL1} and \eqref{tilLk}, we find that all coefficients in the resulting polynomials in $a, b$, and $c$ are positive. Because there are terms involving only $a$ and $c$, the strict inequality holds even if $b=0$. We therefore have
\[
    P_k > 0, \quad k=1,2,3,4.
\]
For the complete explicit expressions, please see Appendix B.3.

\subsubsection*{The subregion $V_4$}

Finally, on $V_4  = \left\{\frac{1}{3}\lambda_3 < -\lambda_2 < \lambda_3 \right\}$, we set $a>0$, $b>0$, and $c>0$ via the relations
\begin{equation}\label{abcV4}
    \begin{cases}
        \lambda_1 = -5 a - b - c,\\
        \lambda_2 = -a - b,\\
        \lambda_3 = 3 a + b,
    \end{cases}
\end{equation}
from which we deduce $\lambda_4 = 3a + b + c > \lambda_3$.

Again, after substitution and expansion, we observe that the polynomials in $a, b$, and $c$ possess strictly positive coefficients. Since $a, b, c > 0$, we conclude that
\[
    P_k > 0, \quad k=1,2,3,4.
\]
For their detailed expressions, please see Appendix B.4.

\vspace{3mm}

Therefore, from \eqref{tilL1} and \eqref{tilLk}, we have $P_k > 0$ within each subregion, and consequently throughout the entire domain $\lambda_1 < \lambda_2 < 0 < \lambda_3 < \lambda_4$. It follows that the coefficients of $h_{44k}^2$ in $d\Psi$ are all strictly negative; that is,
\[ \tilde{L}_k < 0, \quad k=1,2,3,4.\]

\vspace{2mm}

\subsubsection*{Step 2: fully crossing terms $h_{ijk}^2$} 

Because $\Psi$ is a linear combination of $\varphi$ and $\Phi$, and we have established in \eqref{crossing} and earlier computations that neither $d\varphi$ nor $d\Phi$ contains fully crossing terms, we deduce that all fully crossing terms $h_{ijk}^2$ (with distinct $1\le i,j,k\le 4$) vanish in $d\Psi$ as well.

\vspace{2mm}

\subsubsection*{Step 3: weighted curvature terms $W_{ij}R_{ijij}$}

Recall that $\sum_{i<j} R_{ijij} = \frac{1}{2}R_M = \frac{1}{2}(12-S)$, so we obtain the sum of the weighted curvature terms as
\begin{equation}\label{tilR}
    \tilde{R} = \mathcal{R} + \frac{S}{2}\left(\frac{1}{2}(12-S)\right) = -\frac{1}{4}S^2+\frac{7}{2}S-6 = -\frac{1}{4}(S-12)(S-2).
\end{equation}
Consequently, when $S \le 12$, noting that we also have $S > 6$ by Lemma \ref{lemmaGB}, we conclude that $\tilde{R} \ge 0$.

\vspace{3mm}

We are now in a position to prove Lemma \ref{lemmaS=12}.
\begin{proof}[Proof of Lemma \ref{lemmaS=12}]
    Integrating $d\Psi$ over $M^4$ and applying Stokes' Theorem, we have
    \[ 0 = \int_{\partial M^4}\Psi = \int_{M^4} d\Psi = \int_{M^4} \left(\sum_{k=1}^4\tilde{L}_k h_{44k}^2 - \tilde{R}\right) \mathrm{vol}. \]
    Since $\tilde{L}_k < 0$ and $\tilde{R} \ge 0$, the only way to avoid a contradiction is if $h_{44k} \equiv 0$ and $\tilde{R} = 0$ everywhere on $M^4$. Therefore, by \eqref{solu}, $h_{44k} \equiv 0$ forces $h_{iik} \equiv 0$ for all $i,k=1,2,3,4$. Furthermore, the condition $\tilde{R}=0$ implies $S=12$.  

    In conclusion, we have $S=12$ and $M^4$ is isoparametric.
\end{proof}

\vspace{2mm}

In summary, we present the proof of Theorem \ref{th4}.

\begin{proof}[Proof of Theorem \ref{th4}]
    First, by Lemma \ref{lemma4.3:K>0}, the constant Gauss-Kronecker curvature must be positive, which implies that the principal curvatures satisfy $\lambda_1 < \lambda_2 < 0 < \lambda_3 < \lambda_4$.

\vspace{1mm}

    Next, Lemma \ref{lemmaS>12} ensures that $M^4$ has non-negative scalar curvature; that is, $S \le 12$.

\vspace{1mm}

    Finally, Lemma \ref{lemmaS=12} demonstrates that we actually have $S=12$ and $M^4$ is isoparametric. According to the classification theorem in \cite{C}, $M^4$ must be a Cartan minimal hypersurface.
\end{proof}


\vspace{5mm}


\section{Three distinct principal curvatures at one point}
In this section, we demonstrate that $M^4$ cannot have exactly three distinct principal curvatures at any point.

Suppose there exists a point $p \in M^4$ at which $M^4$ has exactly three distinct principal curvatures. Ordering the principal curvatures from least to greatest, the following two cases cover all possible configurations at $p$:
\[
    \text{Case I: } \lambda_1 < \lambda_2 = \lambda_3 < \lambda_4; \quad \text{Case II: } \lambda_1 = \lambda_2 < \lambda_3 < \lambda_4 \text{ or } \lambda_1 < \lambda_2 < \lambda_3 = \lambda_4.
\]
We examine these two cases separately.

\addtocontents{toc}{\protect\setcounter{tocdepth}{-1}} 
\subsection*{Case I. $\lambda_2=\lambda_3$}

In fact, we establish a stronger result for this scenario, as stated below.

\begin{lemma}\label{3:Kfeifu}
    Let $M^4 \hookrightarrow \mathbb{S}^5(1)$ be a closed minimal hypersurface with constant scalar curvature and constant Gauss-Kronecker curvature. If there exists a point $p \in M^4$ at which $M^4$ has exactly 3 distinct principal curvatures, then the Gauss-Kronecker curvature must be positive.
\end{lemma}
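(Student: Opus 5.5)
We would argue by contradiction, pointwise at $p$, in the spirit of Section 3. Suppose $\mathcal{K}\le 0$.

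\emph{The case $\mathcal{K}=0$.} Theorem \ref{Cui} (Cui) says $M^4$ is totally geodesic. Then all principal curvatures vanish, contradicting the existence of three distinct ones at $p$.

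\emph{Setting up the case $\mathcal{K}<0$.} At $p$ write the principal curvatures as $\mu,\mu,\alpha,\beta$, with $\mu,\alpha,\beta$ distinct and $2\mu+\alpha+\beta=0$. Then $\mathcal{K}=\mu^2\alpha\beta<0$ forces $\mu\neq 0$ and $\alpha\beta<0$. By \eqref{f_4:K} we get $f_4=\frac12 S^2-4\mathcal{K}>\frac12 S^2$. This strict inequality is the quantitative input replacing the exact values $f_4=\frac{7}{12}S^2$ and $f_4=\frac14 S^2$ used in Lemmas \ref{lemma3.1} and \ref{lemma3.2}.

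\emph{Reducing to a quadratic form.} Since $H$, $S$ and $f_4$ are constant, $\Delta f_4=0$. Combining \eqref{eq:b3} with \eqref{eq:b1} gives
\[
\sum_{i,j,k}\Big(\tfrac13\big(2\lambda_i^2+2\lambda_j^2+2\lambda_k^2+\lambda_i\lambda_j+\lambda_j\lambda_k+\lambda_k\lambda_i\big)-\tfrac{f_4}{S}\Big)h_{ijk}^2=0 \quad\text{at } p.
\]
Next we impose the first-order constraints at $p$. Differentiating $H$, $S$ and $f_4$ gives, for each $k$,
\[
\sum_i h_{iik}=0,\qquad \sum_i\lambda_i h_{iik}=0,\qquad \sum_i\lambda_i^3 h_{iik}=0.
\]
With respect to the distinct values $\mu,\alpha,\beta$ this is a Vandermonde-type system in $(h_{11k}+h_{22k},\,h_{\alpha\alpha k},\,h_{\beta\beta k})$, where $1,2$ index the $\mu$-eigenspace. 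Solving it expresses these quantities through one free parameter per $k$. As in Lemma \ref{lemma3.1}, we also rotate within the $\mu$-eigenspace to kill some mixed components.

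We then group the $h_{ijk}$ by the eigenspaces of their indices: pure $\mu$-blocks, $\mu\mu\alpha$, $\mu\alpha\beta$, and so on. On each group we compute the effective coefficient of the quadratic form after substituting the linear relations. The goal is to show that every group's coefficient is strictly negative, using $f_4/S>S/2$ and $2\mu+\alpha+\beta=0$. This forces $h_{ijk}=0$ at $p$. Then \eqref{eq:b1} gives $S=4$, so $M^4$ is a Clifford torus, which has only two distinct principal curvatures — a contradiction.

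\emph{The main obstacle.} The crux is the sign analysis of these effective coefficients. Some raw weights, such as $3\lambda_i^2$ for $i=j=k$, can exceed $S/2$, so the sign only emerges after eliminating $h_{iik}$ via the constraint system. We would handle Case I ($\lambda_2=\lambda_3$, so $\mu$ is in the middle) and Case II ($\mu$ extreme) separately. In each case we parametrize $(\mu,\alpha,\beta)$ by nonnegative variables $a,b,c$ adapted to the sign pattern $\alpha\beta<0$, exactly as in the substitutions \eqref{abcU1}, \eqref{abc}. We then check that the resulting homogeneous polynomials have coefficients of a single sign.

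If some block fails to be definite, the fallback is this. We use that $f_4=\frac12S^2-4\mathcal{K}$ is a constant, while the eigenvalue configuration at $p$ is locally rigid ($\mu^2\alpha\beta=\mathcal{K}$, $2\mu^2+\alpha^2+\beta^2=S$). This pins down $\mu,\alpha,\beta$ in terms of $S$ and $\mathcal{K}$, and we then verify the inequality only on that one-parameter family.
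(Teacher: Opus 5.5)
Your pointwise route is genuinely different from the paper's, and it works. It is in fact easier than you expect once one step is corrected.

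\textbf{The correction.} The first-order system is square, not underdetermined. In the unknowns $(h_{11k}+h_{22k},\,h_{33k},\,h_{44k})$ its determinant is $(\mu+\alpha+\beta)(\alpha-\mu)(\beta-\mu)(\beta-\alpha)=-\mu(\alpha-\mu)(\beta-\mu)(\beta-\alpha)$. This is nonzero, since $\mathcal{K}=\mu^2\alpha\beta<0$ gives $\mu\neq 0$. So there is no free parameter: $h_{11k}+h_{22k}=h_{33k}=h_{44k}=0$ for all $k$.

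\textbf{The sign analysis.} The surviving components are those in which the $\alpha$-index and the $\beta$-index each occur at most once. These are the blocks $\mu\mu\mu$, $\mu\mu\alpha$, $\mu\mu\beta$, $\mu\alpha\beta$. Using $2\mu+\alpha+\beta=0$, their weights $\frac13(2\lambda_i^2+2\lambda_j^2+2\lambda_k^2+\lambda_i\lambda_j+\lambda_j\lambda_k+\lambda_k\lambda_i)$ are
\[
\frac S2+\alpha\beta,\qquad \frac S2-\frac{\beta^2}{3},\qquad \frac S2-\frac{\alpha^2}{3},\qquad \frac S2-\frac{\mu^2}{3}.
\]
When $\mathcal{K}<0$ each of these is $<\frac S2$, while $f_4/S=\frac S2-\frac{4\mathcal{K}}{S}>\frac S2$ by \eqref{f_4:K}. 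This is exactly the mechanism you intended. Every coefficient is therefore strictly negative, which forces $h_{ijk}(p)=0$. Then \eqref{eq:b1} gives $S=4$ and a Clifford torus, as you planned.

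None of your contingency tools are needed: no rotation, no Case I/II split, no $a,b,c$ substitution, no fallback. The large weights $3\alpha^2$ and $3\beta^2$ you worried about sit on components that the constraints kill.

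\textbf{Comparison with the paper.} The paper argues globally. For $\mathcal{K}\le 0$ it shows that $\lambda_4$ is a simple principal curvature on all of $M^4$, so $\chi(M^4)=0$. Lemma \ref{lemmaGB} then gives $S=6(\mathcal{K}+1)\le 6$, contradicting the Cheng--Yang bound $S>20/3$ (Theorem \ref{YC3}).

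Your argument extends the Section 3 technique. It uses only \eqref{eq:b1}, \eqref{eq:b3} and the first-order constraints at the single point $p$. So it avoids Gauss--Bonnet, Poincar\'e--Hopf and the deep pinching theorem.

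Handling $\mathcal{K}=0$ separately by Theorem \ref{Cui} is also cleaner than the paper's uniform treatment. The paper's step ``the signs of the principal curvatures are constant by continuity'' is only justified when no principal curvature can vanish, i.e.\ for $\mathcal{K}\neq 0$.

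The paper's route is shorter given the external results, and it reuses Lemma \ref{lemmaGB}. Your local method, by contrast, depends on the sign $\mathcal{K}<0$. For $\mathcal{K}>0$ the $\mu\mu\mu$ coefficient $\alpha\beta+4\mathcal{K}/S$ becomes positive while others can stay negative. That is why Section 5 needs the cut-off argument.
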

\begin{proof}[Proof of Lemma \ref{3:Kfeifu}]
    We proceed by contradiction. 
    Suppose that the constant Gauss-Kronecker curvature is not positive, i.e. $\mathcal{K} \le 0$. We claim that $\lambda_4 > 0$ and thus $\lambda_1 < 0$. 
    
    Indeed, if the greatest principal curvature $\lambda_4 \le 0$, the minimality of $M^4$ forces $\lambda_1 = \lambda_2 = \lambda_3 = \lambda_4 = 0$. Consequently, $M^4$ would be totally geodesic, which contradicts the existence of three distinct principal curvatures at $p$. 

\vspace{1.5mm}

    Since $\mathcal{K} \le 0$, it algebraically follows that the product $\lambda_2 \lambda_3 \ge 0$. WLOG, we may assume that $\lambda_1 < 0$, $\lambda_2, \lambda_3 \le 0$, and $\lambda_4 > 0$.
    \vspace{1.5mm}

    Continuity ensures that the signs of the principal curvatures remain constant across $M^4$. 
    Because $\lambda_4$ is strictly positive while all other principal curvatures are non-positive, $\lambda_4$ must be a simple eigenvalue of the second fundamental form $h$ across the entire $M^4$. Therefore, by Lemma \ref{lemmaGB}, we obtain $\chi(M^4) = 0$, and thus
    \[S = 6(\mathcal{K} + 1) \le 6.\]
    
    From \eqref{eq:b1}, we have $S \ge 4$. However, $S$ cannot equal $4$ because $M^4$ possesses exactly 3 distinct principal curvatures at $p$, which means it cannot be a Clifford torus. We thus have $S > 4$. Applying Theorem \ref{YC3} (Yang-Cheng \cite{YC3}) then provides
    \[
        S > n + \frac{2}{3}n = \frac{20}{3}.
    \]
    This contradicts $S \le 6$, completing the proof.
\end{proof}

\begin{remark} 
    Under the assumption that $\lambda_2 = \lambda_3$ at $p$, we must have either $\lambda_1 < \lambda_2 = \lambda_3 \le 0 < \lambda_4$ or $\lambda_1 < 0 \le \lambda_2 = \lambda_3 < \lambda_4$. In either scenario, the Gauss-Kronecker curvature $\mathcal{K} = \lambda_1\lambda_2\lambda_3\lambda_4\le 0$. Consequently, Lemma \ref{3:Kfeifu} shows that this case is impossible. 
\end{remark}

\vspace{3mm}

\subsection*{Case II. $\lambda_1=\lambda_2$ or $\lambda_3=\lambda_4$}
To simplify our argument in this case, we first rule out the scenarios have already been solved.

Since we have already addressed the cases where $M^4$ contains a point with two distinct principal curvatures, we now focus on the situation where every point on $M^4$ has exactly three or four distinct principal curvatures. Note that this inherently excludes the possibility of having $\lambda_1 = \lambda_2 < 0 < \lambda_3 = \lambda_4$ at any single point.

\vspace{2mm}

Furthermore, Lemma \ref{3:Kfeifu} establishes that the constant Gauss-Kronecker curvature must be strictly positive. Thus, the cases  $\lambda_1 = \lambda_2 < \lambda_3 < 0 < \lambda_4$ and $\lambda_1 < 0 < \lambda_2 < \lambda_3 = \lambda_4$ are impossible, because they would result in a strictly negative Gauss-Kronecker curvature. With these scenarios excluded, it automatically follows that the principal curvatures in our current case must satisfy
\[
    \lambda_1 \le \lambda_2 < 0 < \lambda_3 < \lambda_4 \ \text{ or } \  \lambda_1 < \lambda_2 < 0 < \lambda_3 \le \lambda_4.
\]

Therefore, in this subsection, we focus on the specific subset of points where exactly three distinct principal curvatures occur, that is:
\[
    P \sqcup Q := \{p \in M^4: \lambda_1(p) = \lambda_2(p)\} \sqcup \{q \in M^4: \lambda_3(q) = \lambda_4(q)\}.
\]
As we assume that there exists a point $p\in M^4$ at which $M^4$ has three distinct principal curvatures, then $P \sqcup Q \neq \emptyset$. 

To simplify notation, we will use $\{f=c\}$ to denote the level set $\{x \in M^4: f(x)=c\}$. For instance, $P=\{\lambda_1 = \lambda_2\}$ and $Q = \{\lambda_3 = \lambda_4\}$.


\vspace{3mm}

Inspired by \cite{SX}, we general their cut-off technique to our case based on the results in Section 4. We will first prove that the scalar curvature is non-negative, meaning $S \le 12$. Subsequently, this condition forces $S = 12$, which creates a geometrical contradiction and shows that this geometric case cannot happen.



\begin{lemma}\label{3roots:S>12}
    Let $M^4 \hookrightarrow \mathbb{S}^5(1)$ be a closed minimal hypersurface with constant scalar curvature and constant Gauss-Kronecker curvature. If there exists a point $p \in M^4$ at which there are exactly three distinct principal curvatures, then $S \le 12$.
\end{lemma}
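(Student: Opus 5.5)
We argue by contradiction, assuming $S>12$, and try to rerun the Stokes argument of Lemma \ref{lemmaS>12} with the 3-form $\varphi=\sum_{i<j}\mathcal{W}_{ij}\theta_{ij}$. The new difficulty is that $\varphi$ is only defined where the four principal curvatures are distinct, that is, on $M^4\setminus(P\sqcup Q)$. By Section 3 and the reductions above, every point has three or four distinct principal curvatures. By Lemma \ref{3:Kfeifu}, $\mathcal{K}>0$, so everywhere
\[
\lambda_1\le\lambda_2<0<\lambda_3\le\lambda_4 ,
\]
and $\lambda_2<0<\lambda_3$ holds globally. Hence $\lambda_1,\lambda_2$ (respectively $\lambda_3,\lambda_4$) stay separated from the positive (respectively negative) pair.

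\textbf{Euler characteristic.} The spaces $E_-=E_{\lambda_1}\oplus E_{\lambda_2}$ and $E_+=E_{\lambda_3}\oplus E_{\lambda_4}$ are smooth rank-2 distributions on all of $M^4$. It is less clear that $\chi(M^4)=0$ still holds. I would obtain it by taking $\lambda_1=\lambda_2$ on $P$ and arguing that, on a closed set of nonzero codimension (this should be checked via the discriminant of $h|_{E_-}$, a sum of two squares, so $P$ has codimension $2$), a generic section is needed. Alternatively one can bypass this step: the Gauss--Bonnet identity gives $S=6(\mathcal{K}+1)$ only if $\chi=0$. If this fails, I would instead keep $\mathcal{K}$ as a parameter and recompute $\mathcal{R}$ from Step 3 of Lemma \ref{lemmaS>12} directly in terms of $S$ and $f_4$. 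Concretely, Step 3 gives
\[
\sum_{i<j}\mathcal{W}_{ij}R_{ijij}=\tfrac52S+\tfrac12S^2-\tfrac32f_4-\tfrac S4(12-S),
\]
and with $f_4=\tfrac12S^2-4\mathcal{K}$ this equals $6\mathcal{K}-\tfrac12S+\tfrac14S^2$. That expression is positive for $S>12$ and $\mathcal{K}>0$, so Lemma \ref{lemmaGB} is not needed for positivity.

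\textbf{Cut-off.} The pointwise identity
\[
d\varphi=\Big(\sum_k\mathcal{L}_kh_{44k}^2-\mathcal{R}\Big)\mathrm{vol}
\]
with $\mathcal{L}_k<0$ holds on $M\setminus(P\sqcup Q)$, since the sign analysis of the $\mathcal{P}_k$ in Section 4 used only $\lambda_1<\lambda_2<0<\lambda_3<\lambda_4$. Following \cite{SX}, I would pick a cut-off $\eta_\varepsilon$ that vanishes near $P\sqcup Q$. Using the functions $\rho_P=(\lambda_2-\lambda_1)^2$ and $\rho_Q=(\lambda_4-\lambda_3)^2$, which are smooth as discriminants of $h|_{E_\pm}$, set
\[
\eta_\varepsilon=\chi(\rho_P/\varepsilon)\,\chi(\rho_Q/\varepsilon).
\]
Stokes then gives
\[
0=\int\eta_\varepsilon\, d\varphi+\int d\eta_\varepsilon\wedge\varphi .
\]
Since $\eta_\varepsilon\,d\varphi\le-\eta_\varepsilon\mathcal{R}\,\mathrm{vol}$, the first integral tends to at most $-\mathcal{R}\,\mathrm{Vol}(M)<0$, provided $P\sqcup Q$ has measure zero. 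That follows from codimension $2$, or else from analyticity, since minimal hypersurfaces are analytic.

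\textbf{Main obstacle.} The hard step is showing that the boundary term $\int d\eta_\varepsilon\wedge\varphi\to0$. The form $\varphi$ contains $\omega_{12}=\sum_k h_{12k}\omega_k/(\lambda_1-\lambda_2)$, which blows up like $\rho_P^{-1/2}$. Its weight, however, is $\mathcal{W}_{12}$, not a vanishing factor, so something else must absorb the singularity. My first attempt is to note that $\theta_{12}$ by itself is the connection form of the rank-2 bundle $E_-$. The singular piece $\mathcal{W}_{12}\theta_{12}$ can then be rewritten using $\mathcal{W}_{12}=\tfrac34(\lambda_1+\lambda_2)^2+\tfrac14(\lambda_1-\lambda_2)^2-\tfrac S2$. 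The smooth part $\tfrac34(\lambda_1+\lambda_2)^2-\tfrac S2$ is a smooth function times a form whose singularity is that of an angle form, $d\arg$ of the eigenframe. This contributes $O(\varepsilon^{1/2})\cdot\varepsilon^{-1/2}$ over a tube of volume $O(\varepsilon)$, which tends to $0$ when $P$ has codimension $2$. If the codimension fails, I would use the integrability bound $|\omega_{12}|\le|\nabla h|/|\lambda_1-\lambda_2|$ together with $|d\eta_\varepsilon|\lesssim|\nabla h|/\sqrt{\varepsilon}$ on $\{\rho_P\sim\varepsilon\}$, a coarea estimate, and the bounded $|\nabla h|^2=S(S-4)$. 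Once the boundary term vanishes, the limit gives $0\le-\mathcal{R}\,\mathrm{Vol}(M)<0$, a contradiction, so $S\le12$.
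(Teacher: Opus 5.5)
\textbf{The boundary term.} The gap is in the step you flag as the main obstacle. Your setup (Stokes for $\eta_\epsilon\varphi$ with a cut-off vanishing near $P\sqcup Q$, then $\epsilon\to0^+$) is the paper's, and your product cut-off is a harmless substitute for the smoothed minimum $u$. But you try to prove $\int d\eta_\epsilon\wedge\varphi\to0$, and that is not what happens.

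On the shell $\{\rho_P\sim\epsilon\}$ you have $|d\rho_P|\lesssim\sqrt{\rho_P}$, hence $|d\eta_\epsilon|\lesssim\epsilon^{-1/2}$, and $|\omega_{12}|\sim\epsilon^{-1/2}$. The weight does not degenerate on $P$: by minimality $\mathcal{W}_{12}=\lambda_3\lambda_4$. So $d\eta_\epsilon\wedge\mathcal{W}_{12}\theta_{12}$ has pointwise size $\epsilon^{-1}$, not the $O(\epsilon^{1/2})\cdot\epsilon^{-1/2}$ you assert. For a transversal codimension-$2$ crossing it lives on a set of volume $\sim\epsilon$, so it is $O(1)$, not $o(1)$. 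The coarea variant only reproduces this $O(1)$ bound. In your own angle-form picture, $\omega_{12}$ integrates to $\pm\pi$ over a small circle linking a generic crossing (the eigenframe turns half a revolution). Hence the boundary term converges to a residue concentrated on $P$ (and similarly on $Q$), which has no reason to vanish.

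The missing idea is that you only need the one-sided bound $\int d\eta_\epsilon\wedge\varphi\le o(1)$, and it holds because this residue has a sign.
\begin{itemize}
\item After substituting \eqref{solu}, $d\rho_P\wedge\varphi=\sum_k c_k h_{44k}^2\,\mathrm{vol}$.
\item The coefficients with a genuine $(\lambda_2-\lambda_1)^{-2}$ singularity are those of $h_{441}^2$ and $h_{442}^2$ coming from $\mathcal{W}_{12}\theta_{12}$. They are negative because $\mathcal{W}_{12}=\lambda_3\lambda_4>0$ and $\lambda_1\le\lambda_2<0<\lambda_3\le\lambda_4$.
\item The other apparently singular terms are difference quotients with removable singularities.
\item Hence $d\rho_P\wedge\varphi\le C\sum_k h_{44k}^2\,\mathrm{vol}$ near $P$.
\item Boundedness of $h_{11k}$ in \eqref{solu} forces $|h_{44k}|\lesssim|\lambda_2-\lambda_1|<\sqrt{\epsilon}$ there. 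So the factor $\epsilon^{-1}$ from the cut-off is absorbed, and the contribution is at most $C\,\mathrm{vol}(\{0<\rho_P<\epsilon\})\to0$.
\item Near $Q$ the same works with $\mathcal{W}_{34}=\lambda_1\lambda_2>0$.
\end{itemize}
This is exactly Lemma \ref{lemma:dfdg}.

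\textbf{The sign of $\mathcal{R}$.} Separately, your bypass here is wrong. With $f_4=\frac12S^2-4\mathcal{K}$ the $S^2$ terms cancel, and $\sum_{i<j}\mathcal{W}_{ij}R_{ijij}=6\mathcal{K}-\frac12S$, not $6\mathcal{K}-\frac12S+\frac14S^2$. Positivity of this means $\mathcal{K}>S/12$, which does not follow from $S>12$ and $\mathcal{K}>0$.

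So positivity of $\mathcal{R}$ needs extra input, namely the relation $S=6(\mathcal{K}+1)$ of Lemma \ref{lemmaGB}, which turns $\mathcal{R}$ into $\frac12S-6$. The paper simply carries this value over from Section 4. When $P$ and $Q$ are both nonempty, no principal curvature is globally simple, so $\chi(M^4)=0$ must be justified, and your ``generic section'' remark does not do it.

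One way to justify it:
\begin{itemize}
\item The traceless parts of $h|_{E_-}$ and $h|_{E_+}$ are sections of plane bundles with Euler classes $2e(E_-)$ and $2e(E_+)$, vanishing exactly on $P$ and on $Q$.
\item Each class therefore lifts to cohomology relative to the complement of $P$, resp.\ $Q$.
\item Since $P\cap Q=\emptyset$, these complements cover $M^4$, so $4e(E_-)e(E_+)=4e(TM^4)=0$ and $\chi(M^4)=0$.
\item If needed, first pass to a finite cover on which $E_\pm$ are orientable.
\end{itemize}
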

Since the scalar curvature of $M^4$ is given by $R_M=12-S$, Lemma \ref{3roots:S>12} implies that the scalar curvature of $M^4$ must be non-negative.
 
\vspace{1mm}

To show this using a cut-off argument, we first construct new 3-forms based on $\varphi$ and establish the following estimates for them:
\begin{lemma}\label{lemma:dfdg}
    Let $g = (\lambda_2 - \lambda_1)^2$ and $f = (\lambda_4 - \lambda_3)^2$. There exist constants $\delta > 0$ and $C > 0$ such that:
    \begin{itemize}
        \item[(i)] on the set $\{0 < g < \delta\}$, we have 
        $\displaystyle{ dg \wedge \varphi \le C \left(\sum_{k=1}^4 h_{44k}^2\right) \mathrm{vol}};$\\[2mm]
        \item[(ii)] on the set $\{0 < f < \delta\}$, we have  
        $\displaystyle{df \wedge \varphi \le C \left(\sum_{k=1}^4  (\lambda_4 - \lambda_3)^2 h_{44k}^2\right) \mathrm{vol}.}$
    \end{itemize}
\end{lemma}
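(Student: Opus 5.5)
The plan is to compute $dg\wedge\varphi$ and $df\wedge\varphi$ directly in a local principal frame. I would expand each in the variables $h_{44k}$ using \eqref{solu} and separate a possibly singular leading part from a bounded remainder. The leading part should turn out to have a favorable sign, or to be divisible by the right power of $\lambda_2-\lambda_1$ (resp. $\lambda_4-\lambda_3$).

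\textbf{Step 1: choice of $\delta$.} $P$ and $Q$ are disjoint closed sets, since a point in $P\cap Q$ would have two distinct principal curvatures, which we have excluded. By the preceding discussion, $\lambda_2<0<\lambda_3$ everywhere. Hence for $\delta$ small:
\begin{itemize}
\item on $\{0<g<\delta\}$ the four principal curvatures are distinct;
\item all differences $\lambda_i-\lambda_j$ other than $\lambda_2-\lambda_1$ are bounded below by a positive constant, and $\lambda_4$ is bounded away from $0$.
\end{itemize}
The analogous statements hold on $\{0<f<\delta\}$ with $\lambda_4-\lambda_3$ in the exceptional role. So on these sets we have a smooth local principal frame, $e_k(\lambda_i)=h_{iik}$, and \eqref{solu} is valid. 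All coefficients in \eqref{solu} are then bounded, except those that carry the factor $(\lambda_2-\lambda_1)^{-1}$ in $h_{11k}$ and $h_{22k}$.

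\textbf{Step 2: the $g$ estimate.} Write
\[
dg=2(\lambda_2-\lambda_1)\sum_k (h_{22k}-h_{11k})\,\omega_k ,
\qquad
\varphi=\sum_{i<j}\mathcal W_{ij}\,\omega_k\wedge\omega_l\wedge\omega_{ij}.
\]
Wedging with $\theta_{ij}$ keeps only the $\omega_i,\omega_j$ components. The result is a combination of the products
\[
(\lambda_2-\lambda_1)(h_{22m}-h_{11m})\,h_{ijm'}/(\lambda_i-\lambda_j),\qquad \{m,m'\}=\{i,j\},
\]
and each $h_{ijm'}$ is a diagonal-type entry $h_{ii j}$ or $h_{jji}$.

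For $(i,j)\neq(1,2)$ the denominator is harmless. Substituting \eqref{solu}, the singular factor $(\lambda_2-\lambda_1)^{-1}$ in $h_{22m}-h_{11m}$ (and in $h_{11j},h_{22j}$) is absorbed by the prefactor $\lambda_2-\lambda_1$. This gives a bound $C\sum_k h_{44k}^2$.

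The critical term is $\theta_{12}$, where $1/(\lambda_1-\lambda_2)$ cancels the prefactor but both remaining factors are of order $(\lambda_2-\lambda_1)^{-1}h_{44k}$. Put $\mu=\lambda_1\approx\lambda_2$ and $K=(\lambda_4-\lambda_3)(\lambda_4-\mu)/(\lambda_3-\mu)>0$. Then \eqref{solu} gives
\[
h_{11k}\approx-\frac{\mu K}{\lambda_4(\lambda_2-\lambda_1)}h_{44k},\qquad
h_{22k}\approx-h_{11k}.
\]
So this term equals $\mathcal W_{12}$ times a fixed sign times
\[
\frac{c}{(\lambda_2-\lambda_1)^2}\,\bigl(h_{441}^2+h_{442}^2\bigr),\qquad c>0,
\]
plus terms bounded by $C\sum h_{44k}^2$, which come from the next-order Taylor terms of the coefficients in \eqref{solu}.

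\textbf{Step 3: the $f$ estimate.} Here $f=(\lambda_4-\lambda_3)^2$, $df=2(\lambda_4-\lambda_3)\sum_k(h_{44k}-h_{33k})\omega_k$, and the coefficient of $h_{33k}$ in \eqref{solu} stays bounded, tending to $-1$ as $\lambda_3\to\lambda_4$. For $\theta_{ij}\ne\theta_{34}$ the contribution already carries the factor $\lambda_4-\lambda_3$. I would check that in these terms a second factor of $\lambda_4-\lambda_3$ appears, because $h_{33k}+h_{44k}=O(\lambda_4-\lambda_3)\,h_{44k}$. In $\theta_{34}$ the factor $\lambda_4-\lambda_3$ is cancelled by $1/(\lambda_3-\lambda_4)$, leaving the leading expression
\[
2\,\mathcal W_{34}\,(\text{sign})\,\bigl(h_{443}^2+h_{444}^2\bigr),
\]
with an $O(\lambda_4-\lambda_3)$ correction.

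\textbf{Main obstacle.} In both cases the whole lemma reduces to showing that the leading singular (resp. order-zero) quadratic form is nonpositive, or vanishes to the required order. This means pinning down the orientation sign in $dg\wedge\omega_3\wedge\omega_4\wedge\omega_{12}$ (resp. $df\wedge\omega_1\wedge\omega_2\wedge\omega_{34}$) together with the sign of $\mathcal W_{12}=3\mu^2-S/2$ (resp. $\mathcal W_{34}$) near $P$ (resp. $Q$).

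I would first test the sign numerically at the limit configurations $\lambda_1=\lambda_2=\mu<0<\lambda_3<\lambda_4$ and $\lambda_1<\lambda_2<0<\lambda_3=\lambda_4$, using $\lambda_3+\lambda_4=-2\mu$ and the constraint $S=6(\mathcal K+1)$. If the sign is not definite there, the fallback is to expand one order further and check whether the singular parts coming from $\theta_{12}$ and from the pair $\theta_{13},\theta_{23}$ (resp. $\theta_{14},\theta_{24}$) cancel, via the same partial-fraction identity as in \eqref{crossing}. Only after such a cancellation would the stated bounds with constant $C$ follow. The remaining steps are routine bounded-coefficient estimates on the compact closure of the region.
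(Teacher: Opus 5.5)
Your overall architecture is the same as the paper's: work in a principal frame, eliminate $h_{11k},h_{22k},h_{33k}$ through \eqref{solu}, isolate the singular part and sign it. As written, however, the proposal has two genuine gaps, and the second is the heart of the lemma.

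First, your claim in Step~2 that every $\theta_{ij}$ with $(i,j)\neq(1,2)$ contributes at most $C\sum_k h_{44k}^2$ is false. The single factor $\lambda_2-\lambda_1$ in $dg$ is used up in making $g_k=2(\lambda_2-\lambda_1)(h_{22k}-h_{11k})=\tilde m_g h_{44k}$ bounded, where $\tilde m_g=\frac{2(\lambda_4-\lambda_3)}{\lambda_4}\bigl(\frac{\lambda_2(\lambda_4-\lambda_1)}{\lambda_3-\lambda_2}+\frac{\lambda_1(\lambda_4-\lambda_2)}{\lambda_3-\lambda_1}\bigr)$. It cannot also absorb the factor $(\lambda_2-\lambda_1)^{-1}$ carried by $h_{113},h_{223},h_{114},h_{224}$. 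Hence $g_3\,\omega_3\wedge\theta_{13}$, $g_3\,\omega_3\wedge\theta_{23}$, $g_4\,\omega_4\wedge\theta_{14}$ and $g_4\,\omega_4\wedge\theta_{24}$ are each of size $(\lambda_2-\lambda_1)^{-1}h_{44k}^2$, with no definite sign. The paper treats them in pairs. The total coefficient of $h_{443}^2$ has the form $\mathcal G_1/(\lambda_2-\lambda_1)$ with $\mathcal G_1$ smooth and vanishing on $\{\lambda_1=\lambda_2\}$: there $\mathcal W_{13}=\mathcal W_{23}$ and the two fractions coincide, so the singularity is removable. The same holds for $(\theta_{14},\theta_{24})$ and $h_{444}^2$. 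You use this kind of pairing in Step~3, via $h_{33k}+h_{44k}=O(\lambda_4-\lambda_3)h_{44k}$, but it is missing from the $g$ estimate, where it is genuinely needed.

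Second, you never determine the sign of the leading terms, and that is the actual content of the lemma. In $dg\wedge\varphi$, the $\theta_{12}$ part of the coefficients of $h_{441}^2,h_{442}^2$ is of exact order $(\lambda_2-\lambda_1)^{-2}$. In $df\wedge\varphi$, the $\theta_{34}$ part of the coefficients of $h_{443}^2,h_{444}^2$ is of order one. If either had the wrong sign, no bounded $C$ would work.

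Your fallback cannot repair this. $\theta_{12}$ feeds only $h_{441}^2,h_{442}^2$, while the pairs $(\theta_{13},\theta_{23})$ and $(\theta_{14},\theta_{24})$ feed $h_{443}^2,h_{444}^2$, and at lower order.

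The missing observation is that minimality turns the relevant weights into products:
\[
\mathcal W_{12}=\tfrac12(\lambda_1+\lambda_2)^2-\tfrac12(\lambda_3^2+\lambda_4^2)=\lambda_3\lambda_4>0,\qquad \mathcal W_{34}=\lambda_1\lambda_2>0 .
\]
Your $3\mu^2-S/2$ is just $\lambda_3\lambda_4$ once $\lambda_3+\lambda_4=-2\mu$. With $\lambda_1\le\lambda_2<0<\lambda_3<\lambda_4$, everything can then be signed exactly, without any Taylor expansion:
\begin{itemize}
\item[(a)] $\tilde m_g<0$.
\item[(b)] $\omega_1\wedge\theta_{12}=-\frac{\lambda_2(\lambda_4-\lambda_3)(\lambda_4-\lambda_1)}{\lambda_4(\lambda_3-\lambda_2)(\lambda_2-\lambda_1)^2}\,h_{441}\,\mathrm{vol}$ and $\omega_2\wedge\theta_{12}=-\frac{\lambda_1(\lambda_4-\lambda_3)(\lambda_4-\lambda_2)}{\lambda_4(\lambda_3-\lambda_1)(\lambda_2-\lambda_1)^2}\,h_{442}\,\mathrm{vol}$, both with positive coefficients. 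So the two bad terms in $dg\wedge\varphi$ are negative and can be dropped.
\item[(c)] In the $f$ case, write $f_k=(\lambda_4-\lambda_3)\tilde m_f h_{44k}$ with $\tilde m_f=2\bigl(1+\frac{\lambda_3(\lambda_4-\lambda_1)(\lambda_4-\lambda_2)}{\lambda_4(\lambda_3-\lambda_1)(\lambda_3-\lambda_2)}\bigr)>0$. The $\theta_{34}$ terms are then $-\tilde m_f\mathcal W_{34}h_{443}^2$ and $-\tilde m_f\mathcal W_{34}\frac{\lambda_3(\lambda_4-\lambda_1)(\lambda_4-\lambda_2)}{\lambda_4(\lambda_3-\lambda_1)(\lambda_3-\lambda_2)}h_{444}^2$, both negative.
\end{itemize}
Signing the exact coefficients also matters for a further reason: the Taylor corrections you describe as bounded are really of order $(\lambda_2-\lambda_1)^{-1}$.
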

\begin{proof}[Proof of Lemma \ref{lemma:dfdg}]
\textbf{Step 1: (calculate $dg\wedge\varphi$). } We will provide a detailed demonstration of $dg \wedge \varphi$, then $ df \wedge \varphi$ is similar. Because \[dg \wedge \varphi = \sum_{i=1}^4 g_i \omega_i \wedge \varphi,\]
We begin by computing $\omega_i\wedge \varphi$ and the first derivatives $g_i$ separately.
\begin{align*}
\omega_1 \wedge \varphi &= \omega_1 \wedge (\theta_{12} + \theta_{13} + \theta_{14}) \nonumber \\[2mm]
&= \left[ -\mathcal{W}_{12}\left(\frac{\lambda_2}{\lambda_4}\right)\frac{(\lambda_4 - \lambda_3)(\lambda_4 - \lambda_1)}{(\lambda_3 - \lambda_2)(\lambda_2 - \lambda_1)^2} + \mathcal{W}_{13}\left(\frac{\lambda_3}{\lambda_4}\right)\frac{(\lambda_4 - \lambda_2)(\lambda_4 - \lambda_1)}{(\lambda_3 - \lambda_2)(\lambda_3 - \lambda_1)^2} - \frac{\mathcal{W}_{14}}{\lambda_4 - \lambda_1} \right] h_{441} \mathrm{vol},
\end{align*}
\begin{align*}
\omega_2 \wedge \varphi &= \omega_2 \wedge (\theta_{12} + \theta_{23} + \theta_{24}) \nonumber \\[2mm]
&= \left[ \mathcal{W}_{23} \left( \frac{\lambda_3}{\lambda_4}\right)\frac{(\lambda_4 - \lambda_2)(\lambda_4 - \lambda_1)}{(\lambda_3 - \lambda_1)(\lambda_3 - \lambda_2)^2} - \frac{\mathcal{W}_{24}}{\lambda_4 - \lambda_2} - \mathcal{W}_{14}\left(\frac{\lambda_1}{\lambda_4}\right)\frac{(\lambda_4 - \lambda_3)(\lambda_4 - \lambda_2)}{(\lambda_3 - \lambda_1)(\lambda_2 - \lambda_1)^2} \right] h_{442} \mathrm{vol}, \end{align*}
\begin{align*}
\omega_3 \wedge \varphi &= \omega_3 \wedge (\theta_{13} + \theta_{23} + \theta_{34}) \nonumber \\[2mm]
&= \left[ - \frac{\mathcal{W}_{34}}{\lambda_4 - \lambda_3} + \mathcal{W}_{23}\left(\frac{\lambda_2}{\lambda_4}\right)\frac{(\lambda_4 - \lambda_3)(\lambda_4 - \lambda_1)}{(\lambda_3 - \lambda_2)^2(\lambda_2 - \lambda_1)} - \mathcal{W}_{13}\left(\frac{\lambda_1}{\lambda_4}\right)\frac{(\lambda_4 - \lambda_3)(\lambda_4 - \lambda_2)}{(\lambda_3 - \lambda_1)^2(\lambda_2 - \lambda_1)} \right] h_{443} \mathrm{vol}, \end{align*}
\begin{align*}
\omega_4 \wedge \varphi &= \omega_4 \wedge (\theta_{14} + \theta_{24} + \theta_{34}) \nonumber \\[2mm]
&= \left[ - \mathcal{W}_{14}\left(\frac{\lambda_1}{\lambda_4}\right)\frac{(\lambda_4 - \lambda_3)(\lambda_4 - \lambda_2)}{(\lambda_3 - \lambda_1)(\lambda_2 - \lambda_1)(\lambda_4 - \lambda_1)} + \mathcal{W}_{24}\left(\frac{\lambda_2}{\lambda_4}\right)\frac{(\lambda_4 - \lambda_3)(\lambda_4 - \lambda_1)}{(\lambda_2 - \lambda_1)(\lambda_3 - \lambda_2)(\lambda_4 - \lambda_2)} \right. \\[2mm]
&\quad \left. - \mathcal{W}_{34} \left(\frac{\lambda_3}{\lambda_4}\right)\frac{(\lambda_4 - \lambda_2)(\lambda_4 - \lambda_1)}{(\lambda_3 - \lambda_1)(\lambda_3 - \lambda_2)(\lambda_4 - \lambda_3)} \right] h_{444} \mathrm{vol}. \nonumber
\end{align*}

Taking the first derivative of $g=(\lambda_2-\lambda_1)^2$, we obtain
\begin{align*}
    g_i&=2(\lambda_2 - \lambda_1)(h_{22i} - h_{11i})\\[2mm]
    &= 2(\lambda_2 - \lambda_1) \left[ \frac{\lambda_2}{\lambda_4}\frac{(\lambda_4 - \lambda_3)(\lambda_4 - \lambda_1)}{(\lambda_2 - \lambda_1)(\lambda_3 - \lambda_2)} - \left( -\frac{\lambda_1}{\lambda_4}\frac{(\lambda_4 - \lambda_3)(\lambda_4 - \lambda_2)}{(\lambda_3 - \lambda_1)(\lambda_2 - \lambda_1)} \right) \right] h_{44i} \nonumber \\[2mm]
    &= \frac{2(\lambda_4 - \lambda_3)}{\lambda_4} \left( \frac{\lambda_2(\lambda_4 - \lambda_1)}{\lambda_3 - \lambda_2} + \frac{\lambda_1(\lambda_4 - \lambda_2)}{\lambda_3 - \lambda_1} \right) h_{44i} =: \tilde{m}_g h_{44i}
\end{align*}
Substituting into $dg\wedge \varphi$, we have 
\begin{align*}
dg \wedge \varphi =& \sum_{i=1}^4 g_i \omega_i \wedge \varphi \nonumber \\[2mm]
=&\  \tilde{m}_g \left[ - \mathcal{W}_{12}\frac{\lambda_2}{\lambda_4}\frac{(\lambda_4 - \lambda_3)(\lambda_4 - \lambda_1)}{(\lambda_3 - \lambda_2)(\lambda_2 - \lambda_1)^2} + \mathcal{W}_{13}\frac{\lambda_3}{\lambda_4}\frac{(\lambda_4 - \lambda_2)(\lambda_4 - \lambda_1)}{(\lambda_3 - \lambda_2)(\lambda_3 - \lambda_1)^2} - \mathcal{W}_{14}\frac{1}{\lambda_4 - \lambda_1} \right] h_{441}^2 \mathrm{vol} \\[2mm]
& + \tilde{m}_g \left[ \mathcal{W}_{23}\frac{\lambda_3}{\lambda_4}\frac{(\lambda_4 - \lambda_2)(\lambda_4 - \lambda_1)}{(\lambda_3 - \lambda_1)(\lambda_3 - \lambda_2)^2} - \mathcal{W}_{24}\frac{1}{\lambda_4 - \lambda_2} - \mathcal{W}_{12}\frac{\lambda_1}{\lambda_4}\frac{(\lambda_4 - \lambda_3)(\lambda_4 - \lambda_2)}{(\lambda_3 - \lambda_1)(\lambda_2 - \lambda_1)^2} \right] h_{442}^2 \mathrm{vol} \\[2mm]
& + \tilde{m}_g \left[ - \mathcal{W}_{34}\frac{1}{\lambda_4 - \lambda_3} + \mathcal{W}_{23}\frac{\lambda_2}{\lambda_4}\frac{(\lambda_4 - \lambda_3)(\lambda_4 - \lambda_1)}{(\lambda_3 - \lambda_2)^2(\lambda_2 - \lambda_1)} - \mathcal{W}_{13}\frac{\lambda_1}{\lambda_4}\frac{(\lambda_4 - \lambda_3)(\lambda_4 - \lambda_2)}{(\lambda_3 - \lambda_1)^2(\lambda_2 - \lambda_1)} \right] h_{443}^2 \mathrm{vol} \\[2mm]
&  + \tilde{m}_g \left[ - \mathcal{W}_{14}\frac{\lambda_1}{\lambda_4}\frac{(\lambda_4 - \lambda_3)(\lambda_4 - \lambda_2)}{(\lambda_3 - \lambda_1)(\lambda_2 - \lambda_1)(\lambda_4 - \lambda_1)} + \mathcal{W}_{24}\frac{\lambda_2}{\lambda_4}\frac{(\lambda_4 - \lambda_3)(\lambda_4 - \lambda_1)}{(\lambda_2 - \lambda_1)(\lambda_3 - \lambda_2)(\lambda_4 - \lambda_2)} \right.  \\[2mm]
&\ \ \ \ \ \ \ \ \  \left. - \mathcal{W}_{34}\frac{\lambda_3}{\lambda_4}\frac{(\lambda_4 - \lambda_2)(\lambda_4 - \lambda_1)}{(\lambda_3 - \lambda_1)(\lambda_3 - \lambda_2)(\lambda_4 - \lambda_3)} \right] h_{444}^2 \mathrm{vol}.
\end{align*}

\textbf{Step 2: (potential singularities). } 
Among the coefficients containing $(\lambda_2 - \lambda_1)$ in the denominator, we identify two "good" terms and two "bad" terms. A term is classified as "good" if the singularity $(\lambda_2 - \lambda_1)$ in the denominator is canceled by a corresponding factor in the numerator. Conversely, a term is "bad" if it remains genuinely singular and may diverge as $\lambda_1 \to \lambda_2$.

\vspace{1mm}

Recall that the weight $\mathcal{W}_{ij}= (\lambda_i^2 + \lambda_i\lambda_j + \lambda_j^2) - \frac{S}{2} =W_{ij} - \frac{S}{2}$.

\vspace{2mm}

\textbf{Step 2.1: (the good terms). } 

The first good term from $h_{443}^2$ is given by
\begin{align}\label{good1}
 & \tilde{m}_g \left[\frac{\mathcal{W}_{23}\lambda_2(\lambda_4 - \lambda_3)(\lambda_4 - \lambda_1)}{\lambda_4(\lambda_3 - \lambda_2)^2(\lambda_2 - \lambda_1)} - \frac{\mathcal{W}_{13}\lambda_1(\lambda_4 - \lambda_3)(\lambda_4 - \lambda_2)}{\lambda_4(\lambda_3 - \lambda_1)^2(\lambda_2 - \lambda_1)} \nonumber\right] \\[2mm]
    = \ & \frac{\tilde{m}_g (\lambda_4 - \lambda_3)}{\lambda_4(\lambda_2 - \lambda_1)} \left( \frac{\mathcal{W}_{23}\lambda_2(\lambda_4 - \lambda_1)}{(\lambda_3 - \lambda_2)^2} - \frac{\mathcal{W}_{13}\lambda_1(\lambda_4 - \lambda_2)}{(\lambda_3 - \lambda_1)^2} \right) =: \frac{\mathcal{G}_1(\lambda_1, \lambda_2, \lambda_3, \lambda_4)}{\lambda_2 - \lambda_1},
\end{align}
where $\mathcal{G}_1(\lambda_1, \lambda_2, \lambda_3, \lambda_4)$ is a smooth rational function of $\lambda_i$. Notice that substituting $\lambda_2 = \lambda_1$ gives $\mathcal{G}_1(\lambda_1, \lambda_1, \lambda_3, \lambda_4) = 0$. 

Therefore, as $\lambda_2 \to \lambda_1$, we recognize this expression as a difference quotient converging to the partial derivative with respect to its second component
\[
    \lim_{\lambda_2 \to \lambda_1} \frac{\mathcal{G}_1(\lambda_1, \lambda_2, \lambda_3, \lambda_4) }{\lambda_2 - \lambda_1} = \lim_{\lambda_2 \to \lambda_1} \frac{\mathcal{G}_1(\lambda_1, \lambda_2, \lambda_3, \lambda_4) - \mathcal{G}_1(\lambda_1, \lambda_1, \lambda_3, \lambda_4)}{\lambda_2 - \lambda_1} = \partial_{\lambda_2} \mathcal{G}_1(\lambda_1, \lambda_1, \lambda_3, \lambda_4).
\]
Since each $\lambda_i$ is bounded on $M^4$, this limit is uniformly bounded. Thus, the apparent singularity at $\lambda_2 = \lambda_1$ is removable, and there exists a $\delta_1 > 0$ such that the first good term is uniformly bounded on the set $\{0 < g < \delta_1\}$.

\vspace{2mm}

Similarly, the second good term from $h_{444}^2$ can be expressed as
\begin{align}\label{good2}
     & \tilde{m}_g \left[- \frac{\mathcal{W}_{14}\lambda_1(\lambda_4 - \lambda_3)(\lambda_4 - \lambda_2)}{\lambda_4(\lambda_3 - \lambda_1)(\lambda_2 - \lambda_1)(\lambda_4 - \lambda_1)} + \frac{\mathcal{W}_{24}\lambda_2(\lambda_4 - \lambda_3)(\lambda_4 - \lambda_1)}{\lambda_4(\lambda_2 - \lambda_1)(\lambda_3 - \lambda_2)(\lambda_4 - \lambda_2)}\right] \nonumber \\[2mm]
= \ & \frac{\lambda_4 - \lambda_3}{\lambda_4(\lambda_2 - \lambda_1)} \cdot \left( \frac{\mathcal{W}_{24}\lambda_2(\lambda_4 - \lambda_1)}{(\lambda_3 - \lambda_2)(\lambda_4 - \lambda_2)} - \frac{\mathcal{W}_{14}\lambda_1(\lambda_4 - \lambda_2)}{(\lambda_3 - \lambda_1)(\lambda_4 - \lambda_1)} \right) =: \frac{\mathcal{G}_2(\lambda_1 , \lambda_2, \lambda_3, \lambda_4)}{\lambda_2 - \lambda_1},
\end{align}
where $\mathcal{G}_2(\lambda_1, \lambda_2, \lambda_3, \lambda_4)$ is again a smooth rational function satisfying $\mathcal{G}_2(\lambda_1, \lambda_1, \lambda_3, \lambda_4) = 0$. 

By analogous reasoning, as $\lambda_2 \to \lambda_1$, this term converges to a uniformly bounded limit as well:
\[
    \lim_{\lambda_2 \to \lambda_1} \frac{\mathcal{G}_2(\lambda_1, \lambda_2, \lambda_3, \lambda_4)}{\lambda_2 - \lambda_1} = \lim_{\lambda_2 \to \lambda_1} \frac{\mathcal{G}_2(\lambda_1, \lambda_2, \lambda_3, \lambda_4) - \mathcal{G}_2(\lambda_1, \lambda_1, \lambda_3, \lambda_4)}{\lambda_2 - \lambda_1} = \partial_{\lambda_2} \mathcal{G}_2(\lambda_1, \lambda_1, \lambda_3, \lambda_4).
\]
Consequently, the apparent singularity at $\lambda_2 = \lambda_1$ is also removable. There exists a $\delta_2 > 0$ such that this second good term is uniformly bounded on the set $\{0 < g < \delta_2\}$.

\vspace{2mm}

Since the constant scalar curvature $S$ ensures that all principal curvatures $\lambda_i$ are uniformly bounded on $M^4$, the good terms and all other non-singular terms remain uniformly bounded across the entire manifold.

\vspace{3mm}

\textbf{Step 2.2: (the bad terms).} The bad terms arising from the coefficients of $h_{441}^2$ and $h_{442}^2$ are given by
\[ - \tilde{m}_g \mathcal{W}_{12} \frac{\lambda_2}{\lambda_4} \frac{(\lambda_4 - \lambda_3)(\lambda_4 - \lambda_1)}{(\lambda_3 - \lambda_2)(\lambda_2 - \lambda_1)^2}\ \text{ and }\  - \tilde{m}_g \mathcal{W}_{12} \frac{\lambda_1}{\lambda_4} \frac{(\lambda_4 - \lambda_3)(\lambda_4 - \lambda_2)}{(\lambda_3 - \lambda_1)(\lambda_2 - \lambda_1)^2}.\]
However, since $\lambda_1\le\lambda_2<0<\lambda_3 < \lambda_4$, so we have
\begin{itemize}
    \item[(i)]\  $-\tilde{m}_g = -\dfrac{2(\lambda_4 - \lambda_3)}{\lambda_4} \left( \dfrac{\lambda_2(\lambda_4 - \lambda_1)}{\lambda_3 - \lambda_2} + \dfrac{\lambda_1(\lambda_4 - \lambda_2)}{\lambda_3 - \lambda_1} \right)>0$;\\[1mm]
    \item[(ii)]\   $\mathcal{W}_{12}  = \lambda_1^2 + \lambda_1 \lambda_2 + \lambda_2^2 - \frac{1}{2}S  =\frac{1}{2}(\lambda_1 + \lambda_2)^2 - \frac{1}{2}(\lambda_3^2 + \lambda_4 ^2)  = \lambda_3\lambda_4 >0$;\\[1mm]
    \item[(iii)]\  $\dfrac{\lambda_2}{\lambda_4} \dfrac{(\lambda_4 - \lambda_3)(\lambda_4 - \lambda_1)}{(\lambda_3 - \lambda_2)(\lambda_2 - \lambda_1)^2}<0,\ \  \dfrac{\lambda_1}{\lambda_4} \dfrac{(\lambda_4 - \lambda_3)(\lambda_4 - \lambda_1)}{(\lambda_3 - \lambda_2)(\lambda_2 - \lambda_1)^2}<0$
\end{itemize}
consequently, these bad terms are all strictly negative.

\vspace{3mm}

\textbf{Step 3: (estimate of $dg\wedge \varphi$).} 
As established above, the principal curvatures $\lambda_i$ are uniformly bounded, $h_{44k}^2 \ge 0$, and the bad terms are strictly negative. Consequently, there exist constants $\delta_g =\min \{\delta_1, \delta_2\} > 0$ and $C > 0$ (where $C$ depends only on $S$, $f_4$, and $\delta_g$) such that on the set $\{0 < g < \delta_g\}$, we have
\[dg \wedge \varphi \le C\left(\sum_{k=1}^4h_{44k}^2\right)\mathrm{vol}.\]

\vspace{3mm}

\textbf{Step 4: (similar argument for $df\wedge \varphi$).}  

Similarly, a local computation shows
\begin{align*}
    f_i & = 2(\lambda_4 - \lambda_3) (h_{44i} - h_{33i}) = 2(\lambda_4 - \lambda_3)\left[ h_{44i} + \left(\frac{\lambda_3}{\lambda_4}\right)\frac{(\lambda_4 - \lambda_1)(\lambda_4 - \lambda_2)}{(\lambda_3 - \lambda_1)(\lambda_3 - \lambda_2)}h_{44i}\right]\\[1.5mm]
    &:= (\lambda_4 - \lambda_3) \tilde{m}_f h_{44i},
\end{align*}
where $\tilde{m}_f > 0$ is a bounded factor. Denote $\mathcal{M}_f := (\lambda_4 - \lambda_3)^2\dfrac{\tilde{m}_f}{\lambda_4}$, then
\begin{align*}
 &df \wedge \varphi = \sum_{i=1}^4 f_i \omega_i \wedge \varphi \nonumber \\[2mm]
=&\  \mathcal{M}_f \left[ \frac{- \mathcal{W}_{12}\lambda_2(\lambda_4 - \lambda_1)}{(\lambda_3 - \lambda_2)(\lambda_2 - \lambda_1)^2} + \frac{\mathcal{W}_{13}\lambda_3(\lambda_4 - \lambda_2)(\lambda_4 - \lambda_1)}{(\lambda_3 - \lambda_2)(\lambda_3 - \lambda_1)^2 (\lambda_4 - \lambda_3)}
- \frac{\mathcal{W}_{14}\lambda_4}{(\lambda_4 - \lambda_1) (\lambda_4 - \lambda_3)} \right] h_{441}^2 \mathrm{vol} \\[2mm]
 & +  \mathcal{M}_f \left[ \frac{\mathcal{W}_{23}\lambda_3 (\lambda_4 - \lambda_2)(\lambda_4 - \lambda_1)}{(\lambda_3 - \lambda_1)(\lambda_3 - \lambda_2)^2(\lambda_4 - \lambda_3)}
- \frac{\mathcal{W}_{24}\lambda_4}{(\lambda_4 - \lambda_3)(\lambda_4 - \lambda_2)} -\frac{\mathcal{W}_{12}\lambda_1(\lambda_4 - \lambda_2)}{(\lambda_3 - \lambda_1)(\lambda_2 - \lambda_1)^2} \right] h_{442}^2 \mathrm{vol} \\[2mm]
 & +  \mathcal{M}_f  \left[ -  \frac{\mathcal{W}_{34}\lambda_4}{(\lambda_4 - \lambda_3)^2} + \frac{\mathcal{W}_{23}\lambda_2(\lambda_4 - \lambda_1)}{(\lambda_3 - \lambda_2)^2(\lambda_2 - \lambda_1)} - \frac{\mathcal{W}_{13}\lambda_1(\lambda_4 - \lambda_2)}{(\lambda_3 - \lambda_1)^2(\lambda_2 - \lambda_1)} \right] h_{443}^2 \mathrm{vol} \\[2mm]
 + &  \mathcal{M}_f \! \left[ \frac{- \mathcal{W}_{14}\lambda_1(\lambda_4\! -\! \lambda_2)}{(\lambda_3 \!- \!\lambda_1)(\lambda_2 \!-\! \lambda_1)(\lambda_4\! - \!\lambda_1)} \!+ \! \frac{\mathcal{W}_{24}\lambda_2(\lambda_4\! - \!\lambda_1)}{(\lambda_2\! -\! \lambda_1)(\lambda_3\! -\! \lambda_2)(\lambda_4\! - \!\lambda_2)} \!  - \! \frac{\mathcal{W}_{34}\lambda_3(\lambda_4 \! -\! \lambda_2)(\lambda_4 \! - \! \lambda_1)}{(\lambda_3 \! -\!  \lambda_1)(\lambda_3 \!-\! \lambda_2)(\lambda_4 \!- \!\lambda_3)^2} \!\right]\! h_{444}^2 \mathrm{vol}.
\end{align*}

\vspace{2mm}

Similarly to the argument of $dg \wedge \varphi$, the first good term arising from the coefficient of $h_{441}^2$ satisfies 
\begin{equation}\label{good3}
\lim_{\lambda_4 \to \lambda_3} \left( \frac{\mathcal{W}_{13}\lambda_3(\lambda_4 - \lambda_2)(\lambda_4 - \lambda_1)}{(\lambda_3 - \lambda_2)(\lambda_3 - \lambda_1)^2 (\lambda_4 - \lambda_3)}
- \frac{\mathcal{W}_{14}\lambda_4}{(\lambda_4 - \lambda_1) (\lambda_4 - \lambda_3)} \right) = \partial_{\lambda_4} \mathcal{F}_1(\lambda_1,\lambda_2,\lambda_3,\lambda_3), \end{equation}
and the second good term arising from the coefficient of $h_{442}^2$ satisfies
\begin{equation}\label{good4}\lim_{\lambda_4 \to \lambda_3} \left( \frac{\mathcal{W}_{23}\lambda_3 (\lambda_4 - \lambda_2)(\lambda_4 - \lambda_1)}{(\lambda_3 - \lambda_1)(\lambda_3 - \lambda_2)^2(\lambda_4 - \lambda_3)}
- \frac{\mathcal{W}_{24}\lambda_4}{(\lambda_4 - \lambda_3)(\lambda_4 - \lambda_2)} \right) =\partial_{\lambda_4}\mathcal{F}_2(\lambda_1, \lambda_2,\lambda_3,\lambda_3), \end{equation}
where $\mathcal{F}_1$ and $\mathcal{F}_2$ are smooth rational functions. Since each $\lambda_i$ is bounded on $M^4$, so the partial derivatives $\partial_4 \mathcal{F}_i$ are uniformly bounded.

Thus, these good terms, together with the remaining non-singular terms, are uniformly bounded.

\vspace{3mm}

On the other hand, the bad terms arising from the coefficients of $h_{443}^2$ and $h_{444}^2$ are given by
\[
 \frac{-\mathcal{W}_{34}\lambda_4}{(\lambda_4 - \lambda_3)^2}  \quad \text{and} \quad \frac{-\mathcal{W}_{34}\lambda_3(\lambda_4 - \lambda_2)(\lambda_4 - \lambda_1)}{(\lambda_3 - \lambda_1)(\lambda_3 - \lambda_2)(\lambda_4 - \lambda_3)^2}.
\]
Observe that
\[
\mathcal{W}_{34} = \lambda_3^2 + \lambda_3 \lambda_4 + \lambda_4^2 - \frac{1}{2}S = \frac{1}{2}(\lambda_3 + \lambda_4)^2 - \frac{1}{2}(\lambda_1^2 + \lambda_2 ^2) = \lambda_1\lambda_2 > 0,
\]
which implies that both of these bad terms are strictly negative.

\vspace{2mm}

As in the case of $dg\wedge \varphi$, we conclude that there exists a $\delta_3 > 0$ such that on the set $\{0 < f < \delta_3\}$, 
\[df \wedge \varphi \le C \sum_{k=1}^4(\lambda_4-\lambda_3)^2 h_{44k}^2.\]

Finally, by choosing $\delta := \min\{\delta_g, \delta_3\}$, we obtain a uniform constant $\delta > 0$ such that the estimates hold for both $f$ and $g$.

\end{proof}

\vspace{3mm}

Our further argument requires investigating the minimum function $\min\{f,g\}$. However, this function fails to be differentiable along the set $\{f-g = 0\}$. To resolve this non-smoothness, we employ the following smoothing lemma due to Guan \cite{G}:
\begin{lemma}[Guan \cite{G}]\label{lemma:Guan}
    For any $\eps > 0$, there exists an even function $h  \in C^{\infty}(\mathbb{R})$ such that
    \begin{align*}
    \mathrm{(i)}&\ h(t) \ge |t| \text{ for all } t\in \mathbb{R},\  h(t)=|t| \text{ for all }|t|\ge \eps;\\[1mm]
    \mathrm{(ii)} &\  |h'(t)|\le 1 \text{ and }h''(t) \ge 0  \text{ for all } t\in\mathbb{R}  \text{ and } h'(t)\ge 0 \text{ for all } t\ge 0.
\end{align*}
\end{lemma}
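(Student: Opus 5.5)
We construct $h$ by mollifying a convex piecewise-linear function, or more directly by choosing a smooth even convex function that agrees with $|t|$ outside $(-\eps,\eps)$. The plan is to build $h$ from its second derivative. We fix a smooth even bump $\rho\in C^\infty_c(\mathbb{R})$ with $\rho\ge 0$, $\operatorname{supp}\rho\subset(-\eps,\eps)$ and $\int_{\mathbb{R}}\rho=2$. For instance, we take a standard bump on $(-\eps,\eps)$ and rescale it so that it has total mass $2$. We then set
\[
h'(t)=-1+\int_{-\infty}^t\rho(s)\,ds,\qquad h(t)=\eps_0+\int_0^t h'(s)\,ds,
\]
where the constant $\eps_0$ will be fixed so that $h(t)=|t|$ for $|t|\ge\eps$.

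We verify the properties in order.

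\textbf{$h'$ and $h''$.} Since $\rho$ is even with mass $2$, $h'$ is odd. We have $h'\equiv -1$ on $(-\infty,-\eps]$ and $h'\equiv 1$ on $[\eps,\infty)$, and $h'$ is nondecreasing because $h''=\rho\ge 0$. Hence $|h'|\le 1$, and oddness gives $h'(0)=0$, so $h'(t)\ge 0$ for $t\ge 0$. Since $h'$ is odd, $h$ is even. Smoothness is clear.

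\textbf{Choice of $\eps_0$.} For $t\ge\eps$ we have $h(t)=\eps_0+\int_0^\eps h'+ (t-\eps)$. To get $h(t)=t$ there we choose
\[
\eps_0=\eps-\int_0^\eps h'(s)\,ds=\int_0^\eps\bigl(1-h'(s)\bigr)\,ds\ge 0.
\]
By evenness, $h(t)=|t|$ also holds for $t\le-\eps$.

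\textbf{The inequality $h\ge|t|$.} Let $u(t)=h(t)-t$ on $[0,\infty)$. Then $u'=h'-1\le 0$, so $u$ is nonincreasing, and $u(t)=0$ for $t\ge\eps$. Hence $u\ge 0$ on $[0,\infty)$, and evenness extends $h\ge|t|$ to all of $\mathbb{R}$.

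There is no real obstacle in this argument. The only point needing care is the normalization, namely choosing $\int\rho=2$ together with the constant $\eps_0$ so that $h$ matches $|t|$ exactly outside $[-\eps,\eps]$; the monotonicity of $h-t$ then gives $h\ge|t|$ at no extra cost.
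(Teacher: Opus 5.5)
Your proof is correct and complete. The three ingredients each do what you claim:
\begin{itemize}
\item $h''=\rho\ge 0$ gives convexity.
\item Evenness of $\rho$ together with $\int\rho=2$ makes $h'$ odd and nondecreasing from $-1$ to $1$. This yields $|h'|\le 1$, $h'\ge 0$ on $[0,\infty)$, and evenness of $h$.
\item The normalization $h(0)=\int_0^\eps(1-h')\,ds$ forces $h(t)=|t|$ for $|t|\ge\eps$. Monotonicity of $h(t)-t$ on $[0,\infty)$ then gives $h\ge|t|$.
\end{itemize}

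The paper gives no argument for this lemma; it only quotes it from Guan, so there is no proof in the text to compare yours against. Your $h$ is exactly the mollification $|\cdot|*(\rho/2)$: both functions have second derivative $\rho$ and agree with $|t|$ for $t\ge\eps$, so their affine difference vanishes. In that form, $h(t)=|t|$ for $|t|\ge\eps$ follows from evenness of the mollifier, and $h\ge|t|$ is just $\int|t-s|\,\tfrac{\rho(s)}{2}\,ds\ge\bigl|\int(t-s)\,\tfrac{\rho(s)}{2}\,ds\bigr|=|t|$.

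One cosmetic point: the paper later applies the lemma with $\eps=\epsilon_0$, so naming $h(0)$ as $\eps_0$ would clash with the paper's notation. Rename that constant if you insert your proof.
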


Because $M^4$ is compact and $f+g$ is strictly positive everywhere (since $f$ and $g$ cannot vanish simultaneously), there exists a sufficiently small constant $\epsilon_0 > 0$ such that $f+g > 2\epsilon_0$ across $M^4$.

\vspace{1mm}

Typically, we select the smoothing function $h$ with respect to $\eps=\epsilon_0$ and introduce the smoothed minimum function
\[u := \frac{f+g}{2} - \frac{h(|f-g|)}{2}.\]
Observe that $f$ and $g$ cannot be identically equal on $M^4$. If $f \equiv g$ everywhere, we would strictly have $f = g > 0$, which implies that $M^4$ possesses exactly four distinct principal curvatures everywhere, a contradiction to our assumption. 

Thus, the coincidence set $ E := \{f = g\}$ is a closed proper subset of $M^4$.

\vspace{2mm}

Setting $E_{\epsilon_0} := \{ |f - g| \le \epsilon_0\} \supset E$, we have the following properties of $u$:

\begin{itemize}
    \item[(i)] $u = \dfrac{f+g}{2}-\dfrac{|f-g|}{2} = \min\{f,g\}\ge 0$ on $M\setminus E_{\epsilon_0}$. Thus, $u = 0$ iff. $\min\{f,g\} = 0$. 
    \vspace{2mm}
    \item[(ii)] $u\ge \epsilon_0 - \dfrac{\epsilon_0}{2} \ge \dfrac{\epsilon_0}{2}$  on $E_{\epsilon_0}$,
\end{itemize} 

\vspace{1mm}

This construction securely avoids the singularity that would otherwise occur on the coincidence set $E$.

\vspace{5mm}

We are now in a position to prove Lemma \ref{3roots:S>12}.

\begin{proof}[Proof of Lemma \ref{3roots:S>12}]
Suppose for the sake of contradiction that $S > 12$. 

Let
\[ V = P_\epsilon \sqcup U_\epsilon \sqcup Q_\epsilon := \{0 < g < \epsilon\} \sqcup \{u > \epsilon\} \sqcup \{0 < f < \epsilon\}. \]

For $0 < \epsilon < \min\{\delta, \epsilon_0/2\}$ chosen sufficiently small, we introduce a smooth cut-off function $\eta_\epsilon : \mathbb{R} \to [0,1]$ such that
\begin{itemize}
    \item[(a)] $0 \le \eta_\epsilon \le 1$;

\vspace{1.5mm}
    
    \item[(b)] $\eta_\epsilon(t) = 0$ for $t \le \frac{\epsilon}{3}$;

\vspace{1.5mm}

    \item[(c)] $\eta_\epsilon(t) = 1$ for $t \ge \epsilon$;
    
\vspace{1.5mm}
    
    \item[(d)] $0 \le \eta_\epsilon'(t) \le O \left( \frac{1}{\epsilon} \right)$ for $t \in \left(\frac{\epsilon}{3}, \epsilon\right)$, and $\eta_\epsilon' \equiv 0$ everywhere else.
\end{itemize}

\vspace{2mm}

By Stokes' Theorem and the identity $d\big((\eta_\epsilon \circ u) \varphi\big) = (\eta_\epsilon \circ u)\, d\varphi + (\eta'_\epsilon \circ u)\, du \wedge \varphi$, we have
\[ \int_V (\eta_\epsilon \circ u)\, d\varphi + \int_V (\eta'_\epsilon \circ u)\, du \wedge \varphi = 0. \]
Therefore, 
\begin{align*}
0 &\ge \int_V (\eta_\epsilon \circ u) \left( \sum_{k=1}^4 \mathcal{L}_k h_{44k}^2 - \mathcal{R} \right) \mathrm{vol} = - \int_V (\eta'_\epsilon \circ u)\, du \wedge \varphi \\[2mm]
& = - \int_{P_\epsilon} (\eta'_\epsilon \circ g)\, dg \wedge \varphi - \int_{Q_\epsilon} (\eta'_\epsilon \circ f)\, df \wedge \varphi \\[2mm]
& \gtrsim -\frac{C}{\epsilon}\int_{P_\epsilon} \sum_{k=1}^4 h_{44k}^2\, \mathrm{vol} - \frac{C}{\epsilon} \int_{Q_\epsilon} \sum_{k=1}^4 (\lambda_4 - \lambda_3)^2 h_{44k}^2 \mathrm{vol}. 
\end{align*}

Since $S(S-4) = \sum h_{ijk}^2$ is constant, each $h_{ijk}$ is uniformly bounded on $M^4$. Hence, 
\begin{align*}
        |h_{11k}| &= \left|- \left( \frac{\lambda_1}{\lambda_4} \right) \frac{(\lambda_3 - \lambda_4)(\lambda_2 - \lambda_4)}{(\lambda_3 - \lambda_1)(\lambda_2 - \lambda_1)} h_{44k}\right| \quad \text{is bounded on } P_\epsilon.
\end{align*}
This implies that $|h_{44k}| \lesssim \sqrt{\epsilon}$ on $P_\epsilon$. Note also that $f=(\lambda_4 - \lambda_3)^2<\epsilon$ on $Q_\epsilon$. Consequently, 
\begin{equation}\label{5:calR}
    0 \ge \int_V(\eta_\epsilon \circ u) \left( \sum_{k=1}^4 \mathcal{L}_k h_{44k}^2 - \mathcal{R} \right) \mathrm{vol} \gtrsim -C\, \mathrm{vol}(P_\epsilon \sqcup Q_\epsilon).
\end{equation}

Notice that $\mathrm{vol}(P_\epsilon) \le \mathrm{vol}(M^4)$ and $\mathrm{vol}(Q_\epsilon) \le \mathrm{vol}(M^4)$, both of which are finite. Since $\epsilon > 0$ can be chosen arbitrarily small, taking the limit as $\epsilon \to 0^+$ yields
\[ \lim_{\epsilon\to 0^+} \mathrm{vol}(P_\epsilon) = \mathrm{vol}\left(\bigcap_{\epsilon>0}P_\epsilon\right) = \mathrm{vol}(\emptyset) = 0, \]
\[ \lim_{\epsilon\to 0^+} \mathrm{vol}(Q_\epsilon) = \mathrm{vol}\left(\bigcap_{\epsilon>0}Q_\epsilon\right) = \mathrm{vol}(\emptyset) = 0. \]
Thus, as $\epsilon \to 0^+$, the inequality \eqref{5:calR} becomes
\[ 0 = \int_{V} \left(\sum_{k=1}^{4} \mathcal{L}_k h_{44k}^2 - \mathcal{R}\right) \mathrm{vol}. \]
However, if $S > 12$, Subsection 4.2 implies that
\[ \mathcal{L}_k < 0 \quad \text{and} \quad \mathcal{R} > 0. \]
Since $V$ is a nonempty open set, $\mathrm{vol}(V) > 0$. Consequently, the equality above cannot hold, which yields the desired contradiction.
\end{proof}

\vspace{3mm}


Since we must have $S \le 12$, we will next show that $S = 12$, which implies it is impossible to have three distinct principal curvatures anywhere. Our strategy is to apply the exact same argument to $\Psi$. For the term $du \wedge \Psi$, we only need to change the weights from $\mathcal{W}_{ij}$ to $W_{ij}$.
\vspace{2mm}

Because the only difference lies in these weights, the good term arising from $h_{443}^2$ is now given by
\begin{align*}
    & \frac{{W}_{23}\lambda_2(\lambda_4 - \lambda_3)(\lambda_4 - \lambda_1)}{\lambda_4(\lambda_3 - \lambda_2)^2(\lambda_2 - \lambda_1)} - \frac{{W}_{13}\lambda_1(\lambda_4 - \lambda_3)(\lambda_4 - \lambda_2)}{\lambda_4(\lambda_3 - \lambda_1)^2(\lambda_2 - \lambda_1)}. 
\end{align*}
It follows directly from \eqref{good1} that this expression is uniformly bounded.

Similarly, the other good terms arising from $h_{444}^2$ correspond exactly to \eqref{good2}, simply replacing $\mathcal{W}_{24}$ and $\mathcal{W}_{14}$ with $W_{24}$ and $W_{14}$. As demonstrated previously, it is uniformly bounded as well.

\vspace{2.5mm}

The bad terms arising from $h_{441}^2$ and $h_{442}^2$ are now
\[ - \tilde{m}_g W_{12} \frac{\lambda_2}{\lambda_4} \frac{(\lambda_4 - \lambda_3)(\lambda_4 - \lambda_1)}{(\lambda_3 - \lambda_2)(\lambda_2 - \lambda_1)^2} \quad \text{and} \quad - \tilde{m}_g W_{12} \frac{\lambda_1}{\lambda_4} \frac{(\lambda_4 - \lambda_3)(\lambda_4 - \lambda_2)}{(\lambda_3 - \lambda_1)(\lambda_2 - \lambda_1)^2}.\]
Since 
\[W_{12} = \lambda_1^2 + \lambda_1 \lambda_2 + \lambda_2^2=\frac{1}{2}(\lambda_1 + \lambda_2)^2 + \frac{1}{2} \lambda_1^2 + \frac{1}{2}\lambda_2^2>0,\]
these bad terms remain strictly negative.
    
\vspace{2mm}

Similarly, \eqref{good3} and \eqref{good4} hold for $df\wedge \Psi$ as well. We simply replace the weights $\mathcal{W}_{ij}$ with $W_{ij}$. Furthermore, since $W_{34} = \frac{1}{2}(\lambda_3 + \lambda_4)^2 + \frac{1}{2} \lambda_3^2 + \frac{1}{2}\lambda_4^2>0$, the bad terms in $df \wedge \Psi$ remain strictly negative. Consequently, the results of Lemma \ref{lemma:dfdg} still hold. More precisely, for constants $\sigma> 0$ and $C>0$, the following inequalities remain valid:
 \begin{itemize}
     \item[(i)] on the set $\{ 0 < g < \sigma\}$,
$\displaystyle{dg \wedge \Psi \le C\left(\sum_{k=1}^4h_{44k}^2\right) \mathrm{vol}},$
\item[(ii)] on the set $\{0<f < \sigma\}$,
$\displaystyle{df \wedge \Psi\le C \left(\sum_{k=1}^4 (\lambda_4 - \lambda_3)^2 h_{44k}^2\right)\mathrm{vol}.}$
 \end{itemize}


Now, for any sufficiently small $0 < \epsilon < \min\{\sigma, \epsilon_0/2\}$, the key inequality \eqref{5:calR} becomes
\begin{align}\label{5:tilR}
    0 & \ge \int_V(\eta_\epsilon \circ u) \left( \sum_{k=1}^4 \tilde{L}_k h_{44k}^2 - \tilde{R} \right) \mathrm{vol} \nonumber\\[2mm]
     & =\int_V(\eta_\epsilon \circ u) \left( \sum_{k=1}^4 \tilde{L}_k h_{44k}^2 + \frac{1}{4}(S-12)(S-2) \right) \mathrm{vol} \nonumber\\[2mm]
     & \gtrsim -C\, \mathrm{vol}(P_\epsilon \sqcup Q_\epsilon).
\end{align}
As $\epsilon \to 0^+$, the same argument shows $\mathrm{vol}(P_\epsilon \sqcup Q_\epsilon) \to 0$. Consequently, this implies
\[\int_V \left( \sum_{k=1}^4 \tilde{L}_k h_{44k}^2 + \frac{1}{4}(S-12)(S-2) \right) \mathrm{vol} = 0.\]

\vspace{2mm}

By Lemma \ref{3roots:S>12}, we must have $12 \ge S > 4$. Furthermore, from Subsection 4.3, we know that $\tilde{L}_k < 0$. 

For \eqref{5:tilR} to hold as $\epsilon \to 0^+$, each $h_{44k}$ must vanish on $V$. Consequently, $h_{iik}\equiv 0$ on $V$ and $S\equiv 12$. 

\vspace{2mm}


Since the sets $P=\{g=0\}$ and $Q=\{f=0\}$ are closed, and $P\sqcup Q = V^c$, it follows that \[\partial V \subset P\sqcup Q.\]
By continuity, the principal curvatures $\lambda_1<\lambda_2<\lambda_3<\lambda_4$ are constant on the closure $\overline{V}$, which contains $\partial V$. This contradicts the assumption that there are exactly three distinct principal curvatures on $P \sqcup Q$.

\vspace{3mm}

\vspace{2mm}

We conclude this section with the following theorem:

\begin{theorem}\label{5:3buxing}
    Let $M^4\hookrightarrow \mathbb{S}^5(1)$ be a closed minimal hypersurface with constant scalar curvature and constant Gauss-Kronecker curvature. Then $M^4$ cannot have exactly three distinct principal curvatures at any point.
\end{theorem}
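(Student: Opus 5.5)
The plan is to assemble the theorem from the case analysis already carried out in this section. I argue by contradiction: suppose some point $p\in M^4$ carries exactly three distinct principal curvatures. Ordering them, either $\lambda_2=\lambda_3$ at $p$ (Case I) or $\lambda_1=\lambda_2$ / $\lambda_3=\lambda_4$ at $p$ (Case II).

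For Case I, I invoke Lemma \ref{3:Kfeifu}, which gives $\mathcal{K}>0$. On the other hand, the remark following that lemma shows that $\lambda_2=\lambda_3$ with $\lambda_1<\lambda_2$, $\lambda_3<\lambda_4$ and $\sum\lambda_i=0$ forces $\mathcal{K}(p)=\lambda_1\lambda_2^2\lambda_4\le 0$, because $\lambda_1<0<\lambda_4$. This contradicts $\mathcal{K}>0$, so Case I never occurs.

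For Case II, I first use Theorem \ref{theorem3}. If some point had exactly two distinct principal curvatures, $M^4$ would be a Clifford torus, which has two distinct principal curvatures everywhere; that is incompatible with the point $p$. So every point has three or four distinct principal curvatures.

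Combining $\mathcal{K}>0$ from Lemma \ref{3:Kfeifu} with the exclusion of Case I everywhere, the sign pattern $\lambda_1\le\lambda_2<0<\lambda_3\le\lambda_4$ holds on all of $M^4$. Moreover, $P=\{g=0\}$ and $Q=\{f=0\}$ are disjoint closed sets with $P\sqcup Q\neq\emptyset$.

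Next I apply Lemma \ref{3roots:S>12} to get $S\le 12$. I then rerun the cut-off argument with the 3-form $\Psi$, using the estimates for $dg\wedge\Psi$ and $df\wedge\Psi$ near $P$ and $Q$, the signs $\tilde L_k<0$ from Subsection 4.3, and $\tilde R=-\frac14(S-12)(S-2)\ge 0$. Letting $\epsilon\to0^+$ forces $h_{44k}\equiv0$ on the open set $V=M^4\setminus(P\sqcup Q)$, which is nonempty: if $V$ were empty, $M^4=P\sqcup Q$ would be a disjoint union of two closed sets, and by connectedness it would equal $P$ or $Q$. In that case a multiplicity-two principal curvature would hold globally, which I would handle separately (see below). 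By \eqref{solu}, $h_{44k}\equiv0$ gives $h_{iik}\equiv0$ on $V$, so every $\lambda_i$ is locally constant on $V$.

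The main obstacle, where I would take the most care, is turning this into a contradiction at $\partial V$. Take a connected component $V_0$ of $V$. Each $\lambda_i$ is continuous on $M^4$, since ordered eigenvalues are continuous, and constant on $V_0$, so the values extend to $\overline{V_0}$. Since $M^4$ is connected and $P\sqcup Q\ne\emptyset$, we have $\overline{V_0}\neq V_0$, and $\partial V_0\subset P\sqcup Q$. At a boundary point two of the constants would coincide, yet on $V_0$ they are distinct, which is a contradiction.

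The degenerate situation $V=\emptyset$ remains. There $M^4=P$ (or $Q$) has three distinct principal curvatures everywhere, $\lambda_4$ (respectively $\lambda_1$) is a globally simple principal curvature, and Lemma \ref{lemmaGB} gives $S=6(\mathcal{K}+1)$. In this situation I would run a Section-4-type integral argument with a 3-form adapted to the multiplicity-two eigenspace; the expectation is that it again yields $S=12$ and $h_{iik}=0$, so $M^4$ would be isoparametric with three distinct principal curvatures. That contradicts Münzner's result, since $g=3$ isoparametric hypersurfaces require equal multiplicities.
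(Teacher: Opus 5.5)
Your main line is the paper's own argument. Case~I is excluded by Lemma~\ref{3:Kfeifu} together with $\mathcal{K}(p)=\lambda_1\lambda_2^2\lambda_4\le 0$. In Case~II you exclude two-curvature points via Theorem~\ref{theorem3}, get $S\le 12$ from Lemma~\ref{3roots:S>12}, and rerun the cut-off argument with $\Psi$ to obtain $h_{44k}\equiv 0$ on $V=M^4\setminus(P\sqcup Q)$, reaching a contradiction at the boundary. Your component-wise version of that last step is a cleaner rendering of the paper's ``the $\lambda_i$ are constant on $\overline V$'': each $\lambda_i$ is constant on a component $V_0$, and $\partial V_0\subset P\sqcup Q$ is nonempty by connectedness. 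You are also right that everything hinges on $V\neq\emptyset$, which the paper merely asserts. If $M^4=P$, then $g\equiv 0$ and $f>2\epsilon_0$, hence $u\equiv 0$ and $V=\emptyset$. In that case neither the proof of Lemma~\ref{3roots:S>12} nor the $\Psi$ argument produces anything.

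The gap is your handling of that degenerate case. It is a placeholder (``the expectation is that it again yields $S=12$''), not an argument, and the proposed tool does not transplant. The forms $\theta_{ij}$ and the coefficients $\mathcal{L}_k,\tilde L_k$ are built from $\omega_{12}=\sum_k h_{12k}\omega_k/(\lambda_1-\lambda_2)$ and carry $(\lambda_1-\lambda_2)^2$ in their denominators, so they make no sense when $\lambda_1\equiv\lambda_2$. The missing idea is simple algebra. Suppose $\lambda_1=\lambda_2=:\mu$ on all of $M^4$; the case $M^4=Q$ is symmetric. Then $\lambda_3+\lambda_4=-2\mu$ and $\lambda_3\lambda_4=\frac12(6\mu^2-S)$, and substituting into $f_4=\sum_i\lambda_i^4$ gives
\[
f_4=-12\mu^4+2S\mu^2+\tfrac12 S^2 .
\]
Since $S$ and $f_4$ are constants, $\mu^2$ takes at most two values. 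Hence the continuous function $\mu<0$ is constant on the connected $M^4$, and then so are $\lambda_3$ and $\lambda_4$. So $M^4$ would be isoparametric with $g=3$ and multiplicities $(2,1,1)$, contradicting Cartan's theorem that all multiplicities are equal when $g=3$. Insert this before invoking Lemma~\ref{3roots:S>12}, whose proof presupposes $V\neq\emptyset$, and the argument is complete.
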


\vspace{1cm}

\addtocontents{toc}{\protect\setcounter{tocdepth}{2}}
\section{Proof of The Main Theorem}

\addtocontents{toc}{\protect\setcounter{tocdepth}{-1}} 
\subsection*{Proof of Theorem 1.11}
We will prove the main theorem by considering the following cases.

\subsection*{Case I} 
If the constant Gauss-Kronecker curvature is zero, then by Theorem \ref{Cui} (Cui \cite{Cui}), $M^4$ is totally geodesic. Therefore, $M^4$ is an equatorial $4$-sphere and $S=0$.

\subsection*{Case II} 
If the constant Gauss-Kronecker curvature is non-zero, we consider the following subcases. 

\subsubsection*{Case II.1} 
If there exists a point with exactly two distinct principal curvatures, then by Theorem \ref{theorem3}, $M^4$ is a Clifford torus $\mathbb{S}^2\left(\frac{\sqrt{2}}{2}\right)\times \mathbb{S}^2\left(\frac{\sqrt{2}}{2}\right)$ or $\mathbb{S}^1\left(\frac{1}{2}\right)\times \mathbb{S}^3\left(\frac{\sqrt{3}}{2}\right)$, and $S=4$.

\subsubsection*{Case II.2} 
If $M^4$ has exactly four distinct principal curvatures everywhere, then $M^4$ is isoparametric and $S=12$ by Theorem \ref{th4}. That is, $M^4$ is Cartan's minimal hypersurface.

\subsubsection*{Case II.3} 
If there exists a point with exactly three distinct principal curvatures, this contradicts Theorem \ref{5:3buxing} from Section 4.

\vspace{3mm}

The proof is complete.

\addtocontents{toc}{\protect\setcounter{tocdepth}{2}}

\newpage
\appendix

\section{The negativity of homogeneous polynomials $\mathcal{P}_k$ in \eqref{calLk}} 
Here, we provide the explicit expressions for $\mathcal{P}_k$ for $k=2,3,4$.

First,
\begin{align}\label{calP2}
    \mathcal{P}_2 = & \lambda_1^5+\lambda_2^5+5 \lambda_2^4 \lambda_3+9 \lambda_2^3 \lambda_3^2+7 \lambda_2^2 \lambda_3^3+3 \lambda_2 \lambda_3^4+\lambda_3^5+3 \lambda_1^4 (\lambda_2+\lambda_3) \nonumber\\[1.5mm]
    & +\lambda_1^3 (7 \lambda_2^2+14 \lambda_2 \lambda_3+5 \lambda_3^2) \nonumber +\lambda_1^2 (9 \lambda_2^3+27 \lambda_2^2 \lambda_3+23 \lambda_2 \lambda_3^2+5 \lambda_3^3) \nonumber\\[1.5mm]
    &+\lambda_1 (5 \lambda_2^4+20 \lambda_2^3 \lambda_3+27 \lambda_2^2 \lambda_3^2+14\lambda_2 \lambda_3^3+3 \lambda_3^4),
\end{align}

\vspace{3mm}

Similarly, calculate $\mathcal{P}_3$, we have 
\begin{align}\label{calP3}
    \mathcal{P}_3 =& \lambda_1^5 + \lambda_2^5 + 3 \lambda_2^4 \lambda_3 + 7 \lambda_2^3 \lambda_3^2 + 9 \lambda_2^2 \lambda_3^3 + 5 \lambda_2 \lambda_3^4 + \lambda_3^5 + 3 \lambda_1^4 (\lambda_2+\lambda_3) \nonumber\\[1.5mm]
    & + \lambda_1^3 (5 \lambda_2^2+14 \lambda_2 \lambda_3+7 \lambda_3^2) + \lambda_1^2 (5 \lambda_2^3+23 \lambda_2^2 \lambda_3+27 \lambda_2 \lambda_3^2+9 \lambda_3^3) \nonumber\\[1.5mm]
    &+ \lambda_1 (3 \lambda_2^4+14 \lambda_2^3 \lambda_3+27 \lambda_2^2 \lambda_3^2+20 \lambda_2 \lambda_3^3+5 \lambda_3^4),
 \end{align}
 
     \vspace{2mm}

Finally, for $\mathcal{P}_4$, recall that the factor $\alpha$ introduced in \eqref{solu} corresponds to exchanging indices $1$ and $4$. To be precise,
    \[h_{114}^2 =  \alpha {h_{444}^2} = \frac{\lambda_1^2(\lambda_3 - \lambda_4)^2(\lambda_2 - \lambda_4)^2}{\lambda_4^2(\lambda_3-\lambda_1)^2(\lambda_2-\lambda_1)^2}h_{444}^2,\] thus,
\begin{align}\label{calP4}
    \alpha \mathcal{P}_4 =& \lambda_1^5 + \lambda_2^5 + \lambda_2^4 \lambda_3 - \lambda_2^3 \lambda_3^2 - \lambda_2^2 \lambda_3^3 + \lambda_2 \lambda_3^4 + \lambda_3^5+ \lambda_1^4 (\lambda_2+\lambda_3) \nonumber \\[2mm]
    & - \lambda_1^3 (\lambda_2^2+\lambda_3^2) - \lambda_1^2 (\lambda_2^3+\lambda_2^2 \lambda_3+\lambda_2 \lambda_3^2+\lambda_3^3)+ \lambda_1 (\lambda_2^4-\lambda_2^2 \lambda_3^2+\lambda_3^4),
\end{align}
it shows that $\mathcal{P}_4 < 0$ as well.

\addtocontents{toc}{\protect\setcounter{tocdepth}{-1}} 
\subsection{The negativity on $U_1$}
Recall the change of variables introduced in \eqref{abcU1}. Since the substitution for $\mathcal{P}_1$ was already demonstrated in Section 4, we now apply these substitutions to $\mathcal{P}_2$, $\mathcal{P}_3$, and $\mathcal{P}_4$:
\begin{align*}
    \mathcal{P}_2 
    = & \ -26 b^5 - 61 b^4 c - 58 b^3 c^2 - 29 b^2 c^3 - 8 b c^4 - c^5 - 
 4 a^3 (3 b^2 + 3 b c + c^2) \nonumber \\[1.5mm] 
 & -2  a^2  (24 b^3 + 35 b^2 c + 19 b c^2 + 4 c^3) - 
 a (61 b^4 + 116 b^3 c + 87 b^2 c^2 + 32 b c^3 + 5 c^4); \nonumber\\[2.5mm]
    \mathcal{P}_3 =& -4 a^3 (b^2 + b c + c^2) - (2 b + c) (3 b^2 + 3 b c + c^2)^2 - 
 2 a^2 (10 b^3 + 15 b^2 c + 13 b c^2 + 4 c^3) \nonumber \\[1.5mm] 
 & - a (33 b^4 + 66 b^3 c + 61 b^2 c^2 + 28 b c^3 + 5 c^4); \nonumber\\[2.5mm]
    \alpha \mathcal{P}_4 =&  -2 b^5 - 5 b^4 c - 12 b^3 c^2 - 13 b^2 c^3 - 6 b c^4 - c^5 - 
     4 a^3 (b^2 + b c + c^2) \nonumber \\[1.5mm]
     &- 2 a^2 (6 b^3 + 9 b^2 c + 11 b c^2 + 4 c^3) - a (9 b^4 + 18 b^3 c + 31 b^2 c^2 + 22 b c^3 + 5 c^4), 
\end{align*}
Clearly, as with $\mathcal{P}_1$, all the coefficients are strictly negative; therefore, on $U_2$, we have
\[\mathcal{P}_k < 0, \quad k=1,2,3,4.\]

\subsection{The negativity on $U_2$}
Recall the change of variables introduced in \eqref{abcU2}. Substituting these into $\mathcal{P}_k$ for $k=1,2,3,4$, we obtain
\begin{align*}
    \mathcal{P}_1 =& -a^5 - 5 a^4 (b + c) - b^2 (b + c) (b + 2 c)^2 - 
 a^3 (9 b^2 + 18 b c + 8 c^2) \\[1.5mm]
     &- 
 a b (3 b^3 + 12 b^2 c + 14 b c^2 + 4 c^3) - 
 a^2 (7 b^3 + 21 b^2 c + 18 b c^2 + 4 c^3);\\[3mm]
    \mathcal{P}_2 =& -a^5 - b^2 (b + c) (3 b + 2 c)^2 - a^4 (7 b + 5 c) - 
 a^3 (21 b^2 + 24 b c + 8 c^2) \\[1.5mm]
     &- 
 a^2 (33 b^3 + 47 b^2 c + 22 b c^2 + 4 c^3) - 
 a b (27 b^3 + 48 b^2 c + 26 b c^2 + 4 c^3);\\[3mm]
    \mathcal{P}_3 =& -a^5 - a^4 (7 b + 5 c) - a^3 (23 b^2 + 28 b c + 8 c^2) - 
 a^2 (41 b^3 + 69 b^2 c + 34 b c^2 + 4 c^3) \\[1.5mm]
     &- 
 b^2 (13 b^3 + 37 b^2 c + 36 b c^2 + 12 c^3) - 
 a b (37 b^3 + 82 b^2 c + 58 b c^2 + 12 c^3);\\[3mm]
\alpha\mathcal{P}_4 =& -a^5 - a^4 (9 b + 5 c) - a^3 (31 b^2 + 34 b c + 8 c^2) - 
 a^2 (51 b^3 + 83 b^2 c + 38 b c^2 + 4 c^3) \\[1.5mm]
     & - 
 b^2 (13 b^3 + 37 b^2 c + 36 b c^2 + 12 c^3) - 
 a b (41 b^3 + 90 b^2 c + 62 b c^2 + 12 c^3),
\end{align*}
Similarly, all coefficients remain strictly negative. Hence, on $U_2$,
\[\mathcal{P}_k < 0, \quad k=1,2,3,4.\]
Consequently, $\mathcal{L}_k < 0$ for $k=1,2,3,4$ everywhere, since $\lambda_1 < \lambda_2 < 0 < \lambda_3 < \lambda_4$.

 \vspace{1.5mm}

\addtocontents{toc}{\protect\setcounter{tocdepth}{2}}

\section{The positivity of homogeneous polynomials $P_k$ in \eqref{tilLk} }
Here, we provide the explicit expressions for $P_k$ for $k=2,3,4$, as well as their corresponding forms after substituting the positive variables $a, b, c$ according to \eqref{abc}.
\begin{align}\label{tilL2}
P_2 =&\  \lambda_1^6 \big(\lambda_2 + 3 \lambda_3\big)^2 + 3 \lambda_1^5 \big(\lambda_2 + \lambda_3\big) \big(\lambda_2 + 3 \lambda_3\big)^2 + \lambda_2^2 \lambda_3^2 \big(\lambda_2 + \lambda_3\big)^2 \big(\lambda_2^2 + \lambda_2 \lambda_3 + \lambda_3^2\big) \nonumber\\[2mm]
&\quad + \lambda_1^4 \big(4 \lambda_2^4 + 21 \lambda_2^3 \lambda_3 + 66 \lambda_2^2 \lambda_3^2 + 93 \lambda_2 \lambda_3^3 + 36 \lambda_3^4\big) \nonumber\\[2mm]
&\quad + \lambda_1 \lambda_2 \lambda_3 \big(-2 \lambda_2^5 - 3 \lambda_2^4 \lambda_3 + 5 \lambda_2^3 \lambda_3^2 + 21 \lambda_2^2 \lambda_3^3 + 21 \lambda_2 \lambda_3^4 + 6 \lambda_3^5\big) \nonumber\\[2mm]
&\quad + \lambda_1^3 \big(3 \lambda_2^5 + 5 \lambda_2^4 \lambda_3 + 24 \lambda_2^3 \lambda_3^2 + 88 \lambda_2^2 \lambda_3^3 + 93 \lambda_2 \lambda_3^4 + 27 \lambda_3^5\big) \nonumber\\[2mm]
&\quad + \lambda_1^2 \big(\lambda_2^6 - 3 \lambda_2^5 \lambda_3 - 6 \lambda_2^4 \lambda_3^2 + 24 \lambda_2^3 \lambda_3^3 + 66 \lambda_2^2 \lambda_3^4 + 45 \lambda_2 \lambda_3^5 + 9 \lambda_3^6\big), 
\end{align}

\begin{align}\label{tilL3}
P_3 =&\  \lambda_1^6 \big(3 \lambda_2 + \lambda_3\big)^2 + 3 \lambda_1^5 \big(\lambda_2 + \lambda_3\big) \big(3 \lambda_2 + \lambda_3\big)^2 + \lambda_2^2 \lambda_3^2 \big(\lambda_2 + \lambda_3\big)^2 \big(\lambda_2^2 + \lambda_2 \lambda_3 + \lambda_3^2\big) \nonumber\\[1.5mm]
& + \lambda_1^4 \big(36 \lambda_2^4 + 93 \lambda_2^3 \lambda_3 + 66 \lambda_2^2 \lambda_3^2 + 21 \lambda_2 \lambda_3^3 + 4 \lambda_3^4\big) \nonumber\\[1.5mm]
 & + \lambda_1 \lambda_2 \lambda_3 \big(6 \lambda_2^5 + 21 \lambda_2^4 \lambda_3 + 21 \lambda_2^3 \lambda_3^2 + 5 \lambda_2^2 \lambda_3^3 - 3 \lambda_2 \lambda_3^4 - 2 \lambda_3^5\big) \nonumber\\[1.5mm]
& + \lambda_1^3 \big(27 \lambda_2^5 + 93 \lambda_2^4 \lambda_3 + 88 \lambda_2^3 \lambda_3^2 + 24 \lambda_2^2 \lambda_3^3 + 5 \lambda_2 \lambda_3^4 + 3 \lambda_3^5\big) \nonumber\\[1.5mm]
& + \lambda_1^2 \big(9 \lambda_2^6 + 45 \lambda_2^5 \lambda_3 + 66 \lambda_2^4 \lambda_3^2 + 24 \lambda_2^3 \lambda_3^3 - 6 \lambda_2^2 \lambda_3^4 - 3 \lambda_2 \lambda_3^5 + \lambda_3^6\big),
\end{align}

   Recall that for $k=4$, we introduced the exchange factor $\alpha$, thus
\begin{align}\label{tilL4}
\alpha P_4 =& \lambda_1^6 \big(\lambda_2 - \lambda_3\big)^2 + 3 \lambda_1^5 \big(\lambda_2 - \lambda_3\big)^2 \big(\lambda_2 + \lambda_3\big) + \lambda_2^2 \lambda_3^2 \big(\lambda_2 + \lambda_3\big)^2 \big(\lambda_2^2 + \lambda_2 \lambda_3 + \lambda_3^2\big) \nonumber\\[1.5mm]
 &- \lambda_1 \lambda_2 \big(\lambda_2 - \lambda_3\big)^2 \lambda_3 \big(2 \lambda_2^3 + 7 \lambda_2^2 \lambda_3 + 7 \lambda_2 \lambda_3^2 + 2 \lambda_3^3\big) \nonumber\\[1.5mm]
& + \lambda_1^3 \big(\lambda_2 - \lambda_3\big)^2 \big(3 \lambda_2^3 + 11 \lambda_2^2 \lambda_3 + 11 \lambda_2 \lambda_3^2 + 3 \lambda_3^3\big) \nonumber\\[1.5mm]
& + \lambda_1^4 \big(4 \lambda_2^4 + 5 \lambda_2^3 \lambda_3 - 6 \lambda_2^2 \lambda_3^2 + 5 \lambda_2 \lambda_3^3 + 4 \lambda_3^4\big) \nonumber\\[1.5mm]
 &+ \lambda_1^2 \big(\lambda_2^6 - 3 \lambda_2^5 \lambda_3 - 6 \lambda_2^4 \lambda_3^2 - 8 \lambda_2^3 \lambda_3^3 - 6 \lambda_2^2 \lambda_3^4 - 3 \lambda_2 \lambda_3^5 + \lambda_3^6\big).
 \end{align}
 
\addtocontents{toc}{\protect\setcounter{tocdepth}{-1}} 
\subsection{The positivity on $V_1$}
Recall the change of variables introduced in \eqref{abc} on $V_1$. Since the substitution for $P_1$ was explicitly detailed in Section 4, we now apply these same substitutions to $P_k$ for $k=2,3,4$. This gives:
\begin{align*}
P_2 = &\ 29952 a^8 + 960 a^7 (91 b + 30 c) + 
 b^2 (2 b^2 + 3 b c + c^2)^2 (3 b^2 + 3 b c + c^2) \nonumber\\[1.5mm]
&\quad+ 
 16 a^6 (6967 b^2 + 4695 b c + 657 c^2) + 
 24 a^5 (3400 b^3 + 3565 b^2 c + 999 b c^2 + 72 c^3) \nonumber\\[1.5mm]
&\quad+ 
 8 a^3 b (1410 b^4 + 2777 b^3 c + 1836 b^2 c^2 + 442 b c^3 + 
    21 c^4) \nonumber \\[1.5mm]
&\quad+ 
 4 a^4 (9415 b^4 + 13872 b^3 c + 6195 b^2 c^2 + 834 b c^3 + 27 c^4) \nonumber\\[1.5mm]
&\quad+ 
 2 a^2 b^2 (1076 b^4 + 2748 b^3 c + 2571 b^2 c^2 + 1050 b c^3 + 
    159 c^4) \nonumber\\[1.5mm]
&\quad+ 
 8 a b^2 (30 b^5 + 97 b^4 c + 123 b^3 c^2 + 77 b^2 c^3 + 24 b c^4 + 
    3 c^5),\end{align*}

\begin{align*}
 P_3 = &\ 248832 a^8 + 383616 a^7 (2 b + c) + 
 9 b^2 (2 b^2 + 3 b c + c^2)^2 (3 b^2 + 3 b c + c^2) \nonumber\\[1.5mm]
& + 
 80 a^6 (12879 b^2 + 12879 b c + 3041 c^2) + 
 24 a^5 (32802 b^3 + 49203 b^2 c + 23201 b c^2 + 3400 c^3) \nonumber\\[1.5mm]
& + 
 4 a^4 (93603 b^4 + 187206 b^3 c + 132261 b^2 c^2 + 38658 b c^3 + 
    3835 c^4) \nonumber\\[1.5mm]
& + 
 24 a^3 (4730 b^5 + 11825 b^4 c + 11130 b^3 c^2 + 4870 b^2 c^3 + 
    963 b c^4 + 64 c^5) \nonumber\\[1.5mm]
& + 
 24 a b (96 b^6 + 336 b^5 c + 474 b^4 c^2 + 345 b^3 c^3 + 
    136 b^2 c^4 + 27 b c^5 + 2 c^6) \nonumber\\[1.5mm]
& + 
 2 a^2 (10716 b^6 + 32148 b^5 c + 37803 b^4 c^2 + 22026 b^3 c^3 + 
    6519 b^2 c^4 + 864 b c^5 + 32 c^6),\end{align*}
  
 \begin{align*}
\alpha P_4 = &\ 62208 a^8 + 88128 a^7 (2 b + c) + 
 b^2 (2 b^2 + 3 b c + c^2)^2 (3 b^2 + 3 b c + c^2)  \nonumber\\[1mm]
& + 
 16 a^6 (13527 b^2 + 13527 b c + 3433 c^2) + 
 120 a^5 (1254 b^3 + 1881 b^2 c + 943 b c^2 + 158 c^3)  \nonumber\\[1.5mm]
& + 
 4 a^4 (16203 b^4 + 32406 b^3 c + 24063 b^2 c^2 + 7860 b c^3 + 
    931 c^4)  \nonumber\\[1.5mm]
&  + 
 8 a^3 (2214 b^5 + 5535 b^4 c + 5410 b^3 c^2 + 2580 b^2 c^3 + 
    587 b c^4 + 48 c^5)  \nonumber\\[1.5mm]
&  + 
 8 a b (36 b^6 + 126 b^5 c + 180 b^4 c^2 + 135 b^3 c^3 + 56 b^2 c^4 + 
    12 b c^5 + c^6)  \nonumber\\[1.5mm]
& + 
 2 a^2 (1500 b^6 + 4500 b^5 c + 5427 b^4 c^2 + 3354 b^3 c^3 + 
    1095 b^2 c^4 + 168 b c^5 + 8 c^6),
\end{align*}
it shows that in $V_1$, $P_k>0 \text{ for } k=1,2,3,4.$

\subsection{The positivity on  $V_2$}
On $V_2$, applying the change of variables defined in \eqref{abcV2}, we obtain
\begin{align*}
    \frac{P_1}{4} =  & 7488 a^8 + 48 a^7 (338 b + 305 c) + 
   b^2 c^2 (2 b^2 + 3 b c + c^2)^2 + 
   4 a^6 (3844 b^2 + 7142 b c + 2929 c^2)\\[1.5mm] 
   & + 
   12 a^5 (640 b^3 + 1934 b^2 c + 1789 b c^2 + 381 c^3) \\[1.5mm]
   & + 
   a^4 (2080 b^4 + 9616 b^3 c + 15876 b^2 c^2 + 7660 b c^3 + 
      931 c^4) \\[1mm]
      & + 
   4 a b c (2 b^5 + 27 b^4 c + 55 b^3 c^2 + 41 b^2 c^3 + 12 b c^4 + 
      c^5) \\[1.5mm]
      & + 8 a^3 (36 b^5 + 260 b^4 c + 737 b^3 c^2 + 619 b^2 c^3 + 
      172 b c^4 + 12 c^5) \\[1.5mm]
      & + 
   4 a^2 (4 b^6 + 54 b^5 c + 285 b^4 c^2 + 380 b^3 c^3 + 
      183 b^2 c^4 + 30 b c^5 + c^6);
\end{align*}

\begin{align*}
  \frac{P_2}{4} & = 7488 a^8 + 96 a^7 (169 b + 75 c) + b^2 c^2 (2 b^2 + 3 b c + c^2)^2 + 
 4 a^6 (3844 b^2 + 3210 b c + 657 c^2) \\[1.5mm]
      & + 
 12 a^5 (640 b^3 + 950 b^2 c + 315 b c^2 + 36 c^3) + 
 8 a^3 b (36 b^4 + 170 b^3 c + 303 b^2 c^2 + 148 b c^3 + 3 c^4)\\[1.5mm]
      &  + 
 4 a b^2 c (2 b^4 + 21 b^3 c + 40 b^2 c^2 + 27 b c^3 + 6 c^4) + 
 a^4 (2080 b^4 + 5424 b^3 c + 4260 b^2 c^2 + 492 b c^3 + 27 c^4) \\[1.5mm]
      & + 
 4 a^2 b^2 (4 b^4 + 42 b^3 c + 165 b^2 c^2 + 180 b c^3 + 57 c^4)  ;
\end{align*}

\begin{align*}
    \frac{P_3}{4} &= 62208 a^8 + 2592 a^7 (44 b + 37 c) + 
 b^2 c^2 (2 b^2 + 3 b c + c^2)^2 + 
 4 a^6 (21708 b^2 + 39042 b c + 15205 c^2) \\[1.5mm]
      &+ 
 12 a^5 (2928 b^3 + 8658 b^2 c + 7209 b c^2 + 1700 c^3) \\[1.5mm]
      &+ 
 a^4 (7968 b^4 + 35952 b^3 c + 49284 b^2 c^2 + 24684 b c^3 + 
    3835 c^4) \\[1mm]
      &+ 
 4 a b c (6 b^5 + 39 b^4 c + 72 b^3 c^2 + 55 b^2 c^3 + 18 b c^4 + 
    2 c^5)  \\[1.5mm]
      &
  + 8 a^3 (120 b^5 + 846 b^4 c + 1777 b^3 c^2 + 1452 b^2 c^3 + 
    473 b c^4 + 48 c^5) \\[1.5mm]
      &+ 
 4 a^2 (12 b^6 + 162 b^5 c + 537 b^4 c^2 + 660 b^3 c^3 + 
    345 b^2 c^4 + 72 b c^5 + 4 c^6);
\end{align*}

\begin{align*}
    \frac{\alpha P_4}{4} & = 15552 a^8 + 1296 a^7 (28 b + 17 c) + 
 b^2 c^2 (2 b^2 + 3 b c + c^2)^2 + 
 4 a^6 (8748 b^2 + 11502 b c + 3433 c^2)  \\[1.5mm]
      &+ 
 12 a^5 (1488 b^3 + 3258 b^2 c + 2151 b c^2 + 395 c^3)  \\[1.5mm]
      &+ 
 a^4 (5088 b^4 + 17232 b^3 c + 19332 b^2 c^2 + 7980 b c^3 + 
    931 c^4)  \\[1.5mm]
      &+ 
 4 a b c (6 b^5 + 33 b^4 c + 57 b^3 c^2 + 41 b^2 c^3 + 12 b c^4 + 
    c^5)  \\[1.5mm]
      &+ 8 a^3 (96 b^5 + 516 b^4 c + 911 b^3 c^2 + 645 b^2 c^3 + 
    172 b c^4 + 12 c^5)  \\[1.5mm]
      &+ 
 4 a^2 (12 b^6 + 126 b^5 c + 357 b^4 c^2 + 396 b^3 c^3 + 
    183 b^2 c^4 + 30 b c^5 + c^6).
\end{align*}
This demonstrates that on $V_2$, 
\[P_k > 0, \quad k=1,2,3,4.\]

\subsection{The positivity on $V_3$}
On $V_3$, applying the change of variables defined in \eqref{abcV3}, we obtain
\begin{align*}
    P_1 & = 62208 a^8 + 88128 a^7 (2 b + c) + 
 b^2 (2 b^2 + 3 b c + c^2)^2 (3 b^2 + 3 b c + c^2) \\[1.5mm]
      &+ 
 16 a^6 (13527 b^2 + 13527 b c + 3433 c^2) + 
 120 a^5 (1254 b^3 + 1881 b^2 c + 943 b c^2 + 158 c^3) \\[1.5mm]
      &+ 
 4 a^4 (16203 b^4 + 32406 b^3 c + 24063 b^2 c^2 + 7860 b c^3 + 
    931 c^4) \\[1.5mm]
      &+ 
 8 a^3 (2214 b^5 + 5535 b^4 c + 5410 b^3 c^2 + 2580 b^2 c^3 + 
    587 b c^4 + 48 c^5) \\[1.5mm]
      &+ 
 8 a b (36 b^6 + 126 b^5 c + 180 b^4 c^2 + 135 b^3 c^3 + 56 b^2 c^4 + 
    12 b c^5 + c^6) \\[1.5mm]
      &+ 
 2 a^2 (1500 b^6 + 4500 b^5 c + 5427 b^4 c^2 + 3354 b^3 c^3 + 
    1095 b^2 c^4 + 168 b c^5 + 8 c^6);
\end{align*}

\begin{align*}
    P_2 &= 248832 a^8 + 383616 a^7 (2 b + c) + 
 9 b^2 (2 b^2 + 3 b c + c^2)^2 (3 b^2 + 3 b c + c^2) \\[1.5mm]
      &+ 
 80 a^6 (12879 b^2 + 12879 b c + 3041 c^2) + 
 24 a^5 (32802 b^3 + 49203 b^2 c + 23201 b c^2 + 3400 c^3) \\[1.5mm]
      &+ 
 4 a^4 (93603 b^4 + 187206 b^3 c + 132261 b^2 c^2 + 38658 b c^3 + 
    3835 c^4) \\[1.5mm]
      &+ 
 24 a^3 (4730 b^5 + 11825 b^4 c + 11130 b^3 c^2 + 4870 b^2 c^3 + 
    963 b c^4 + 64 c^5) \\[1.5mm]
      &+ 
 24 a b (96 b^6 + 336 b^5 c + 474 b^4 c^2 + 345 b^3 c^3 + 
    136 b^2 c^4 + 27 b c^5 + 2 c^6) \\[1.5mm]
      &+ 
 2 a^2 (10716 b^6 + 32148 b^5 c + 37803 b^4 c^2 + 22026 b^3 c^3 + 
    6519 b^2 c^4 + 864 b c^5 + 32 c^6);
\end{align*}

\begin{align*}
    P_3 &= 29952 a^8 + 960 a^7 (91 b + 30 c) + 
 b^2 (2 b^2 + 3 b c + c^2)^2 (3 b^2 + 3 b c + c^2) \\[1.5mm]
      &+ 
 16 a^6 (6967 b^2 + 4695 b c + 657 c^2) + 
 24 a^5 (3400 b^3 + 3565 b^2 c + 999 b c^2 + 72 c^3)\\[1.5mm]
      & + 
 8 a^3 b (1410 b^4 + 2777 b^3 c + 1836 b^2 c^2 + 442 b c^3 + 
    21 c^4) \\[1.5mm]
      &+ 
 4 a^4 (9415 b^4 + 13872 b^3 c + 6195 b^2 c^2 + 834 b c^3 + 27 c^4) \\[1.5mm]
      &+ 
 2 a^2 b^2 (1076 b^4 + 2748 b^3 c + 2571 b^2 c^2 + 1050 b c^3 + 
    159 c^4) \\[1.5mm]
      &+ 
 8 a b^2 (30 b^5 + 97 b^4 c + 123 b^3 c^2 + 77 b^2 c^3 + 24 b c^4 + 
    3 c^5) ;
\end{align*}

\begin{align*}
    \alpha P_4 & = 29952 a^8 + 960 a^7 (91 b + 61 c) + 
 b^2 (2 b^2 + 3 b c + c^2)^2 (3 b^2 + 3 b c + c^2) \\[1.5mm]
      &+ 
 16 a^6 (6967 b^2 + 9239 b c + 2929 c^2) + 
 24 a^5 (3400 b^3 + 6635 b^2 c + 4069 b c^2 + 762 c^3) \\[1.5mm]
      &+ 
 8 a b (b + c)^2 (30 b^4 + 53 b^3 c + 35 b^2 c^2 + 10 b c^3 + c^4) \\[1.5mm]
      &+ 
 2 a^2 (b + c)^2 (1076 b^4 + 1556 b^3 c + 783 b^2 c^2 + 152 b c^3 + 
    8 c^4) \\[1.5mm]
      &+ 
 4 a^4 (9415 b^4 + 23788 b^3 c + 21069 b^2 c^2 + 7600 b c^3 + 
    931 c^4) \\[1.5mm]
      &+ 
 8 a^3 (1410 b^5 + 4273 b^4 c + 4828 b^3 c^2 + 2504 b^2 c^3 + 
    587 b c^4 + 48 c^5).
\end{align*}
This shows that on $V_3$, $P_k>0\ \text{for }k=1,2,3,4.$

\subsection{The positivity on $V_4$}
On $V_4$, applying the change of variables defined in \eqref{abcV4}, we obtain

\begin{align*}
    \frac{P_1}{4} &= 15552 a^8 + 1296 a^7 (28 b + 17 c) + 
 b^2 c^2 (2 b^2 + 3 b c + c^2)^2 + 
 4 a^6 (8748 b^2 + 11502 b c + 3433 c^2) \\[1.5mm]
      &+ 
 12 a^5 (1488 b^3 + 3258 b^2 c + 2151 b c^2 + 395 c^3) \\[1.5mm]
      &+ 
 a^4 (5088 b^4 + 17232 b^3 c + 19332 b^2 c^2 + 7980 b c^3 + 
    931 c^4) \\[1.5mm]
      &+ 
 4 a b c (6 b^5 + 33 b^4 c + 57 b^3 c^2 + 41 b^2 c^3 + 12 b c^4 + 
    c^5) \\[1.5mm]
      &+ 8 a^3 (96 b^5 + 516 b^4 c + 911 b^3 c^2 + 645 b^2 c^3 + 
    172 b c^4 + 12 c^5) \\[1.5mm]
      &+ 
 4 a^2 (12 b^6 + 126 b^5 c + 357 b^4 c^2 + 396 b^3 c^3 + 
    183 b^2 c^4 + 30 b c^5 + c^6);
\end{align*}

\begin{align*}
    \frac{P_2}{4} &= 62208 a^8 + 2592 a^7 (44 b + 37 c) + 
 b^2 c^2 (2 b^2 + 3 b c + c^2)^2 + 
 4 a^6 (21708 b^2 + 39042 b c + 15205 c^2) \\[1.5mm]
      &+ 
 12 a^5 (2928 b^3 + 8658 b^2 c + 7209 b c^2 + 1700 c^3) \\[1.5mm]
      &+ 
 a^4 (7968 b^4 + 35952 b^3 c + 49284 b^2 c^2 + 24684 b c^3 + 
    3835 c^4) \\[1.5mm]
      &+ 
 4 a b c (6 b^5 + 39 b^4 c + 72 b^3 c^2 + 55 b^2 c^3 + 18 b c^4 + 
    2 c^5) \\[1.5mm]
      &+ 
 8 a^3 (120 b^5 + 846 b^4 c + 1777 b^3 c^2 + 1452 b^2 c^3 + 
    473 b c^4 + 48 c^5) \\[1.5mm]
      &+ 
 4 a^2 (12 b^6 + 162 b^5 c + 537 b^4 c^2 + 660 b^3 c^3 + 
    345 b^2 c^4 + 72 b c^5 + 4 c^6);
\end{align*}
\begin{align*}
    \frac{P_3}{4} & = 7488 a^8 + 96 a^7 (169 b + 75 c) + b^2 c^2 (2 b^2 + 3 b c + c^2)^2 + 
 4 a^6 (3844 b^2 + 3210 b c + 657 c^2) \\[1.5mm]
      &+ 
 12 a^5 (640 b^3 + 950 b^2 c + 315 b c^2 + 36 c^3) + 
 8 a^3 b (36 b^4 + 170 b^3 c + 303 b^2 c^2 + 148 b c^3 + 3 c^4) \\[1.5mm]
      &+ 
 4 a b^2 c (2 b^4 + 21 b^3 c + 40 b^2 c^2 + 27 b c^3 + 6 c^4) + 
 a^4 (2080 b^4 + 5424 b^3 c + 4260 b^2 c^2 + 492 b c^3 + 27 c^4) \\[1.5mm]
      &+ 
 4 a^2 b^2 (4 b^4 + 42 b^3 c + 165 b^2 c^2 + 180 b c^3 + 57 c^4);
\end{align*}

\begin{align*}
    \frac{\alpha P_4}{4} &= 7488 a^8 + 48 a^7 (338 b + 305 c) + b^2 c^2 (2 b^2 + 3 b c + c^2)^2 + 
 4 a^6 (3844 b^2 + 7142 b c + 2929 c^2) \\[1.5mm]
      &+ 
 12 a^5 (640 b^3 + 1934 b^2 c + 1789 b c^2 + 381 c^3) \\[1.5mm]
      &+ 
 a^4 (2080 b^4 + 9616 b^3 c + 15876 b^2 c^2 + 7660 b c^3 + 931 c^4) \\[1.5mm]
      &+ 
 4 a b c (2 b^5 + 27 b^4 c + 55 b^3 c^2 + 41 b^2 c^3 + 12 b c^4 + 
    c^5) \\[1.5mm]
      &+ 8 a^3 (36 b^5 + 260 b^4 c + 737 b^3 c^2 + 619 b^2 c^3 + 
    172 b c^4 + 12 c^5) \\[1.5mm]
      &+ 
 4 a^2 (4 b^6 + 54 b^5 c + 285 b^4 c^2 + 380 b^3 c^3 + 183 b^2 c^4 + 
    30 b c^5 + c^6).
\end{align*}
This demonstrates that on $V_4$, \[P_k > 0, \quad k=1,2,3,4.\]

\vspace{2mm}

Consequently, $\tilde{L}_k < 0$ for $k=1,2,3,4$, whenever $\lambda_1 < \lambda_2 < 0 < \lambda_3 < \lambda_4$. 

\vspace{3mm}

All of these explicit computations substantiate our analysis in Subsections 4.2 and 4.3.

\end{document}